\documentclass{amsart}
\usepackage[margin=4cm]{geometry}
\usepackage[T1]{fontenc}
\usepackage[utf8]{inputenc}
\usepackage{amsmath,amssymb,amsthm,mathtools}
\usepackage{enumitem}
\usepackage{microtype}
\usepackage{xcolor}
\usepackage{hyperref}
\hypersetup{
  colorlinks=true,
  linkcolor=blue,
  citecolor=blue,
  urlcolor=blue
}
\newtheorem{theorem}{Theorem}[section]
\newtheorem{proposition}[theorem]{Proposition}
\newtheorem{lemma}[theorem]{Lemma}
\newtheorem{corollary}[theorem]{Corollary}

\theoremstyle{definition}
\newtheorem{definition}[theorem]{Definition}
\newtheorem{example}[theorem]{Example}

\theoremstyle{remark}
\newtheorem{remark}[theorem]{Remark}

\newtheorem{maintheorem}{Theorem}

\newtheorem{maincorollary}[maintheorem]{Corollary}

\newcommand{\Gir}{\operatorname{Gir}}
\newcommand{\Fix}{\operatorname{Fix}}
\newcommand{\Stab}{\operatorname{Stab}}
\newcommand{\Aut}{\operatorname{Aut}}

\newcommand{\bdv}{\partial_{\!\infty}}

\newcommand{\Z}{\mathbb Z}

\newcommand{\h}{\mathfrak h}
\newcommand{\hh}{\widehat{\mathfrak h}}
\newcommand{\kk}{\mathfrak k}
\newcommand{\hk}{\widehat{\mathfrak k}}
\newcommand{\m}{\mathfrak m}
\newcommand{\hm}{\widehat{\mathfrak m}}
\newcommand{\HH}{\mathcal H}
\newcommand{\Roll}{\overline{X}^{\mathrm R}}
\newcommand{\bdreg}{\partial_{\mathrm{reg}}}
\newcommand{\bdry}{\partial_{\infty}}
\allowdisplaybreaks
\title[Boundary dynamics and infinite girth]{Boundary Dynamics, Cubical
Actions, and Infinite Girth}
\author[Amrutam]{Tattwamasi Amrutam}
\address{Institute of Mathematics of the Polish Academy of Sciences, ul. Sniadeckich 8, 00-656, Warszawa, Poland}
\email{tattwamasiamrutam@gmail.com}
\thanks{T.A. is supported by National Science Centre, Poland Sonata, grant number 2025/59/D/ST1/03117.}
\author[Banerjee]{Arka Banerjee}
\address{Department of Mathematics
School of Mathematical Sciences
Ramakrishna Mission Vivekananda Educational \& Research Institute (RKMVERI)
PO Belur Math, Dist Howrah 711202
West Bengal, India}
\email{banerjee20arka@gmail.com}
\author[Gulbrandsen]{Daniel L. Gulbrandsen}
\address{Department of Mathematics, Adams State University, Alamosa, CO 81101}
\email{dangulbrandsen@yahoo.com}
\author[Mishra]{Pratyush Mishra}
\address{HUN-REN Alfréd Rényi Institute of Mathematics, 13-15 street, Reáltanoda, 1053 Budapest, Hungary}
\email{mishrapratyushkumar@gmail.com}
\thanks{P.M. was supported by the National Research, Development and Innovation Office (NKFIH) Highlight Grant No. K153681}
\date{\today}
\subjclass[2020]{20F65, 20F67, 20F69, 37B05}
\keywords{Infinite girth, extreme boundary, CAT(0) cube complex, Roller boundary, acylindrically hyperbolic group}
\begin{document}
\begin{abstract}
We develop two methods for proving infinite girth from boundary dynamics. First, we use topologically free
extreme boundary actions to give a boundary-dynamical proof of infinite
girth for finitely generated acylindrically hyperbolic groups. The second
combines Nakamura's criterion with, respectively, flag-space and
Roller-boundary dynamics, yielding infinite-girth results for semisimple
$S$-algebraic groups and for essential non-elementary actions on
finite-dimensional, second countable, non-Euclidean CAT(0) cube complexes.
We also prove that every finitely generated large group has infinite girth,
thereby answering a question of Akhmedov and Mishra~\cite{AkhmedovMishra2026}.
\end{abstract}
\maketitle
\enlargethispage{4pt}
% \tableofcontents

\section{Introduction}\label{sec:introduction}

Let $\Gamma$ be a finitely generated group.  Following Schleimer
\cite{Schleimer2000} and Akhmedov \cite{Akhmedov2003,Akhmedov2005}, the girth of $\Gamma$ is the supremum, over all finite generating sets $S$, of
the smallest length of a nonempty cyclically reduced word in $S^{\pm1}$
representing the identity.  Thus $\Gir(\Gamma)=\infty$ means that, for every
$m\geq1$, one can choose a finite generating set for which no such word of
length at most $m$ represents the identity in $\Gamma$.

Usually, an approach to showing infinite girth involves a ping-pong argument; Nakamura's criterion \cite{Nakamura2014} provides a flexible formulation that generalizes and reformulates earlier work of
Akhmedov \cite{Akhmedov2005}. A notable exception is the
model-theoretic argument of \cite{HullOsin2016}.  Our methods are
again dynamical in spirit in that they use the dynamics of a group action, whether
on an extreme boundary, a flag space, or a Roller boundary.

Our first method can be characterized algebraically.  For $\gamma\in\Gamma$, let
$\varphi_\gamma\colon\Gamma*\langle z\rangle\to\Gamma$ be the homomorphism
which fixes $\Gamma$ pointwise and sends $z$ to $\gamma$.
A finite family of nontrivial elements of $\Gamma*\langle z\rangle$ is
required to survive under one such retraction.  Finite retraction
discrimination implies mixed-identity-freeness, and Hull--Osin proved that
every finitely generated mixed-identity-free group has infinite girth
\cite[Proposition~5.4(d)]{HullOsin2016}.  Our methods
instead give a direct construction of generating sets of arbitrarily large
girth.  Our configuration and terminology are motivated by
\cite{amrutam2025strict}, where explicit nontrivial examples of finite
retraction discrimination can be found in Section~3.

Ozawa proved that an axial sequence associated with a topologically free
extreme boundary gives asymptotically injective retractions
\cite[Lemma~6]{Ozawa2026}.  Motivated by \cite{Ozawa2026}, and using the algebraic methods introduced in \cite{amrutam2025strict}, we obtain the following.

\begin{maintheorem}\label{thm:intro-extreme}
Let $\Gamma$ be a finitely generated group admitting a topologically free
extreme boundary action on a compact Hausdorff space with more than two
points.  Then $\Gir(\Gamma)=\infty$.
\end{maintheorem}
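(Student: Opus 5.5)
The plan is to build, for each $m\ge1$, an explicit finite generating set of girth $>m$ by adjoining one element $\gamma$ to a fixed generating set. Fix a finite generating set $S=\{s_1,\dots,s_k\}$ of $\Gamma$. Consider the modified set
\[
S_\gamma=\{\gamma,\ s_1\gamma,\ \dots,\ s_k\gamma\},
\]
which still generates $\Gamma$, since $s_i=(s_i\gamma)\gamma^{-1}$. A cyclically reduced word $w$ of length $\le m$ in $S_\gamma^{\pm1}$ lifts, after replacing $\gamma$ by the free letter $z$, to a word $\tilde w(z)\in\Gamma*\langle z\rangle$ with $\varphi_\gamma(\tilde w)=w$. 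The first step is a combinatorial lemma: if $w$ is nonempty and cyclically reduced in the free group on $S_\gamma$, then $\tilde w$ is a nontrivial element of $\Gamma*\langle z\rangle$. Indeed, the letters $z$ and $s_iz$ form a free basis of a free subgroup of $\Gamma*\langle z\rangle$; if some $s_i=1$, one first deletes it or treats it separately. Only finitely many such $\tilde w$ of length $\le m$ exist, so the set $F_m$ of these elements is finite. It therefore suffices to show \emph{finite retraction discrimination}: for every finite set $F\subset(\Gamma*\langle z\rangle)\setminus\{1\}$ there is $\gamma\in\Gamma$ with $\varphi_\gamma(f)\neq1$ for all $f\in F$.

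The second, dynamical step proves this discrimination, following Ozawa's axial-sequence idea \cite[Lemma~6]{Ozawa2026}. Write $f=g_0z^{n_1}g_1\cdots z^{n_r}g_r$ in normal form, with $g_1,\dots,g_{r-1}\neq1$ and $n_j\neq0$. Extreme boundary actions, meaning extremely proximal actions with the appropriate minimality and strong proximality, contain elements with north–south dynamics. Using the hypothesis of more than two points, I would choose a hyperbolic-like element $h$ with attracting and repelling points $h^{\pm}$. I would then arrange, by conjugating $h$, that none of the finitely many middle letters $g_j$ map $\{h^+,h^-\}$ into itself. Here topological freeness is used: each nontrivial $g_j$ fixes no nonempty open set, and minimality lets us move the fixed points into generic position. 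Then $\gamma=h^N$ for large $N$ satisfies a ping-pong: choose a point $x$ near neither $h^\pm$ nor its relevant translates. Applying $\varphi_\gamma(f)$ to $x$, each $\gamma^{n_j}$ pushes into a small neighbourhood of $h^{\pm}$, and each $g_j$ moves it off $\{h^+,h^-\}$. A standard ping-pong gives $\varphi_\gamma(f)x$ near $g_0h^{\pm}$. For a suitable second test point this forces $\varphi_\gamma(f)\neq1$; for $r=0$, $f=g_0\neq1$ is trivially fine. Since the finitely many $f\in F$ impose finitely many genericity conditions and large-$N$ conditions, one $\gamma$ works for all.

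The main obstacle I expect is producing the north–south element and the genericity from exactly the stated hypotheses, extreme boundary plus topological freeness. Extreme proximality gives contractions of closed proper sets into small opens but not an obvious single element with two fixed points. If a true loxodromic is unavailable, I would try first to replace $\gamma^{n}$ by a product of two contracting elements $a,b$ with disjoint attracting regions $U_a,U_b$ and repelling complements, built from extreme proximality applied twice. I would then run the ping-pong on open sets rather than points, using topological freeness to choose $U_a,U_b$ small enough to avoid the finitely many sets $g_j^{\pm1}U$ wherever needed.
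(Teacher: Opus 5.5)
\paragraph{The gap is in your first, combinatorial step.}
The dynamics cannot repair it. The claim that $z,s_1z,\dots,s_kz$ form a free basis of a free subgroup of $\Gamma*\langle z\rangle$ is false unless $\Gamma$ is free on $S$.

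These elements generate all of $\Gamma*\langle z\rangle$. The formal word $\xi_i\zeta^{-1}$ (with $\zeta\mapsto z$, $\xi_i\mapsto s_iz$) evaluates to $s_i\in\Gamma$ no matter what $z$ is. Now take any cyclically reduced relation $R(s_1,\dots,s_k)=e$ of length $\ell$ in $\Gamma$. The substitution $\zeta\mapsto\zeta$, $s_i\mapsto\xi_i\zeta^{-1}$ is an isomorphism of free groups, so it turns $R$ into a nontrivial formal word of length at most $2\ell$. Its lift $\tilde w$ is already trivial in $\Gamma*\langle z\rangle$. So $e\in F_m$, and no choice of retraction can help.

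In fact $\Gir(\Gamma,S_\gamma)\le 2\Gir(\Gamma,S)$ for every $\gamma$. For a concrete case, take $\mathrm{PSL}_2(\mathbb Z)\cong\mathbb Z/2*\mathbb Z/3$ acting on the projective line, which satisfies the hypotheses, and let $s_1$ be the generator of order $2$. Then $(s_1\gamma)\gamma^{-1}(s_1\gamma)\gamma^{-1}$ is a cyclically reduced relator of length $4$ for every $\gamma$.

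\paragraph{The missing idea.}
You need to pad each generator by powers of $\gamma$ on \emph{both} sides, with distinct large exponents, as in Lemma~\ref{lem:expansion}: set $x_i=\gamma^{-M_i}t_i\gamma^{N_i}$ with $M_i=(2i-1)(m+1)$ and $N_i=2i(m+1)$. Take a reduced formal word of length at most $m$. Between two consecutive $t$-syllables, the $z$-exponent either
\begin{itemize}
\item is a nonzero difference of distinct multiples of $m+1$, perturbed by at most $m$, so it stays nonzero; or
\item comes from $\xi_i^{\epsilon}\zeta^{c}\xi_i^{-\epsilon}$, where reducedness forces $c\neq0$.
\end{itemize}
So the lift is in reduced free-product normal form and is nontrivial. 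You also need the letters $\gamma^{\pm1},x_i^{\pm1}$ to be pairwise distinct. The paper secures this by putting short words such as $\zeta\xi_i^{-1}$, $\xi_i\xi_j^{\pm1}$, $\zeta^2$, $\xi_i^2$ into the finite set to be discriminated.

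\paragraph{The obstacle you flagged in the dynamical step is not real.}
You do not need a single element with north--south dynamics.
\begin{itemize}
\item Choose poles $z_\pm$ with trivial stabilizers and $z_-\notin\Gamma z_+$. Topological freeness and countability make the free points a dense $G_\delta$. Extreme proximality with $|M|>2$ rules out isolated points, so Baire applies.
\item For disjoint neighbourhoods $U_\pm$ of $z_\pm$, extreme proximality gives $g$ with $g(M\setminus U_-)\subseteq U_+$. Taking complements gives $g^{-1}(M\setminus U_+)\subseteq U_-$. Since $U_+\subseteq M\setminus U_-$, both inclusions iterate to all nonzero powers of $g$.
\item Shrinking $U_\pm$ to handle the finitely many middle syllables then gives exactly your ping-pong.
\end{itemize}
The paper does this slightly differently. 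It spells each element of $\Gamma*\langle z\rangle$ with single letters $z^{\pm1}$ under a no-cancellation condition, so only $\gamma_\nu^{\pm1}$ ever appears. It then lets $\gamma_\nu$ run over Ozawa's axial net, indexed by pairs of neighbourhoods. This yields asymptotic injectivity and hence finite retraction discrimination.

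So your second step is essentially sound once this is noted. It is the generating set in the first step that must be replaced.
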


Recall that a group is acylindrically hyperbolic if it admits a
non-elementary acylindrical action on a hyperbolic space (see~\cite{osin2016acylindrically}).  Yang proved that
every acylindrically hyperbolic group admits a compact
metrizable extreme boundary, and that the action is topologically free when
the finite radical is trivial \cite[Theorem~1.3]{Yang2026}.
Passing to the quotient by the finite radical and using Akhmedov's quotient
principle gives the following consequence.

\begin{maincorollary}\label{cor:intro-ah}
Every finitely generated acylindrically hyperbolic group has infinite girth.
\end{maincorollary}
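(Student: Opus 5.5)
The plan is to deduce Corollary~\ref{cor:intro-ah} from Theorem~\ref{thm:intro-extreme} by passing to a quotient. Let $\Gamma$ be finitely generated and acylindrically hyperbolic. Let $K(\Gamma)$ be its finite radical, the unique maximal finite normal subgroup, which exists for every acylindrically hyperbolic group by Dahmani--Guirardel--Osin. Set $\bar\Gamma=\Gamma/K(\Gamma)$.

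The first step is to check that $\bar\Gamma$ satisfies the hypotheses of Theorem~\ref{thm:intro-extreme}. It is finitely generated, being a quotient of $\Gamma$. Acylindrical hyperbolicity passes to quotients by finite normal subgroups. Concretely, let $\Gamma$ act acylindrically and non-elementarily on a hyperbolic space $X$. Since $K(\Gamma)$ is finite, its orbits are bounded, so $\bar\Gamma$ acts on a quasi-isometric space, for instance a coned-off or orbit-quotient model, or one uses the standard fact for finite-kernel quotients. That action is again acylindrical and non-elementary. Next, $\bar\Gamma$ has trivial finite radical. If $N/K(\Gamma)$ were a nontrivial finite normal subgroup of $\bar\Gamma$, then $N$ would be a finite normal subgroup of $\Gamma$ strictly containing $K(\Gamma)$, contradicting maximality. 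By \cite[Theorem~1.3]{Yang2026}, $\bar\Gamma$ therefore admits a compact metrizable extreme boundary action that is topologically free. This boundary has more than two points: it is infinite, because non-elementary actions give infinitely many limit points, and in any case an extreme boundary of a non-virtually-cyclic group cannot have at most two points. Theorem~\ref{thm:intro-extreme} then gives $\Gir(\bar\Gamma)=\infty$.

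The second step lifts infinite girth back to $\Gamma$ using Akhmedov's quotient principle. If $\Gamma\to\bar\Gamma$ is surjective with $\Gamma$ finitely generated and $\Gir(\bar\Gamma)=\infty$, then $\Gir(\Gamma)=\infty$. The reason is as follows. Given a generating set $\bar S=\{\bar s_1,\dots,\bar s_k\}$ of $\bar\Gamma$ with no short relations, choose lifts $s_i$ and append generators of the kernel. To keep the girth large, multiply those kernel generators into the lifts, for example by replacing each $s_i$ with the pair $s_i, s_ik_j$ and similar variants. More simply, one can cite Akhmedov \cite{Akhmedov2005}: a short relation in the lifted generating set maps to a short relation or to a degenerate word downstairs, and the construction rules both out.

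The main point needing care is the first step. One must make sure that Yang's theorem applies exactly to the quotient, which requires verifying that acylindrical hyperbolicity and finite generation survive, and that the boundary has more than two points. I expect that to be routine once the facts above are cited. I would treat the quotient principle as a black box from the literature, as the paper's phrasing indicates.
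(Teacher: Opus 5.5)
Your route is the paper's: pass to $\bar\Gamma=\Gamma/K(\Gamma)$, get a topologically free extreme boundary from Yang, apply Theorem~\ref{thm:intro-extreme}, and lift with Akhmedov's quotient principle. Two steps, however, are not justified as written.

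\textbf{The bound $|M|>2$.} As quoted, Yang's theorem only provides some compact metrizable extreme boundary $M$. You have not identified $M$ with a limit set of an action on a hyperbolic space, so ``non-elementary actions give infinitely many limit points'' does not apply to it. Your fallback claim is false. With extreme boundary action meaning minimal and extremely proximal, every group acts this way on a point. Any group with an index-two subgroup, e.g.\ $F_2$, acts this way on a two-point set, through a surjection onto $\mathbb Z/2\mathbb Z$ acting by the swap. What excludes $|M|\le 2$ is topological freeness, which you never invoke. If $1\le|M|\le2$, every point of $M$ is open. The kernel of $\bar\Gamma\curvearrowright M$ has index at most $2$ and fixes $M$ pointwise, so topological freeness forces it to be trivial. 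Then $\bar\Gamma$ is finite, contradicting that $\bar\Gamma$ is not virtually cyclic.

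\textbf{The quotient principle.} As you state it, the principle is false: it needs the quotient to be noncyclic (Proposition~\ref{prop:quotient}). For example, $\mathbb Z\times\mathbb Z/2\mathbb Z$ surjects onto $\mathbb Z$, which has infinite girth. Yet it is a noncyclic group satisfying the law $[x,y]$, so it has finite girth by \cite[Theorem~4.1]{Schleimer2000}. In your situation the hypothesis holds, because $\bar\Gamma$ is acylindrically hyperbolic and hence not virtually cyclic. But it must be checked explicitly: $\Gir(\bar\Gamma)=\infty$ alone does not give it, since $\Gir(\mathbb Z)=\infty$.

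Your heuristic for why the principle holds should also be dropped. For $1\neq k\in K(\Gamma)$ one has $s_i^{-1}(s_ik)=k$. So $\bigl(s_i^{-1}(s_ik)\bigr)^{\operatorname{ord}(k)}$ is a relation of length at most $2|K(\Gamma)|$, and in general it is cyclically reduced in the letters $s_i,s_ik$. Multiplying kernel elements into the lifts therefore does not by itself keep the girth large. This ``degenerate'' case is exactly what the real proof has to handle.

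With these two points repaired, and \cite[Proposition~5]{Nakamura2014} cited for the quotient principle, your argument coincides with the paper's.
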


As mentioned above, the conclusion of
Corollary~\ref{cor:intro-ah} is already known from \cite{HullOsin2016}.  Indeed, the
quotient by the finite radical is
mixed-identity-free, and a finitely generated mixed-identity-free group has
infinite girth
\cite[Corollary~5.10 and Proposition~5.4(d)]{HullOsin2016}.  Again, our proof produces explicit generating sets.

Our second method uses Nakamura's ping-pong criterion
\cite[Proposition~2]{Nakamura2014}.
In an earlier version of \cite{Ozawa2026}, Ozawa introduced a finite
ping-pong condition under the name $P_{\mathrm{PHP}}$.  Since his current
formulation is slightly different, we avoid confusion by calling the earlier
condition $P_{\mathrm{FPP}}$ (see Definition~\ref{def:fpp}).  We prove in
Theorem~\ref{thm:fpp-girth} that $P_{\mathrm{FPP}}$ implies infinite girth.

Vigdorovich extends the related flag-space construction to semisimple
$S$-algebraic groups \cite[Theorem~4.5]{vigdorovich2026}.  We use the
flag-space geometry in his proof directly and verify Nakamura's criterion,
obtaining the following.
\begin{maintheorem}\label{thm:intro-vig}
Let $S$ be finite and, for each $v\in S$, let $\mathbb K_v$ be a local field
and $\mathbf G_v$ a connected adjoint $\mathbb K_v$-simple algebraic group.
Let $G=\prod_{v\in S}\mathbf G_v(\mathbb K_v)$.  If $\Gamma<G$ is finitely
generated and its projection to every $\mathbf G_v(\mathbb K_v)$ is Zariski
dense and unbounded, then $\Gir(\Gamma)=\infty$.
\end{maintheorem}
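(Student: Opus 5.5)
The plan is to verify Nakamura's ping-pong criterion \cite[Proposition~2]{Nakamura2014}, in the packaged form of the finite ping-pong condition $P_{\mathrm{FPP}}$ (Definition~\ref{def:fpp}). Theorem~\ref{thm:fpp-girth} asserts that $P_{\mathrm{FPP}}$ implies $\Gir(\Gamma)=\infty$, so it suffices to check $P_{\mathrm{FPP}}$ for $\Gamma$. Roughly, given a finite generating set $\{s_1,\dots,s_k\}$ and $m\geq1$, we must produce an element $g\in\Gamma$ together with disjoint open sets in a compact $\Gamma$-space. These sets should be arranged so that words of length $\le m$ in the modified generators $s_i g^{N_i}$ (or $g^{N}s_i g^{-N}$-type conjugates) act with ping-pong behaviour. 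The dynamics come from the action of $\Gamma$ on a product of flag varieties $\mathcal B=\prod_{v\in S}\mathbf G_v(\mathbb K_v)/P_v$, as in Vigdorovich's proof of \cite[Theorem~4.5]{vigdorovich2026}.

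First, I reduce to a good element. Zariski density and unboundedness of each projection give, via the standard Abels--Margulis--Soifer / Benoist argument used in \cite{vigdorovich2026}, an element $g\in\Gamma$ whose projection to each factor is proximal (loxodromic) on the relevant flag space $\mathcal B_v$. Such a $g$ has an attracting point $\xi^+_v$ and a repelling hyperplane-type set $H^-_v$ in each factor, and $g^{n}$ contracts the complement of a neighborhood of $H^-_v$ into a small neighborhood of $\xi^+_v$, uniformly in $n$ large.

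Next comes genericity. For the finitely many elements $w$ of $\Gamma$ that appear as subwords of length $\le m$ in the given generators (with $w\neq e$), I use Zariski density to replace $g$ by a conjugate $hgh^{-1}$ for suitable $h\in\Gamma$. The goal is that $w\xi^+\notin H^-$ and $w\xi^+\neq\xi^+$ for all such $w$ in every factor, and likewise for the attracting data of $g^{-1}$. Each condition is Zariski-open and nonempty, since adjoint and $\mathbb K_v$-simple groups have no nontrivial fixed flag or invariant proper subvariety, so a Zariski dense $\Gamma$ meets the intersection. With these transversality conditions, I choose small disjoint neighborhoods $U^\pm$ of the attracting/repelling data translated by the $w$'s, and take $N$ large. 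Then each generator $g^{N}s_ig^{-N}$-type element sends the appropriate sets into one another, and any cyclically reduced nontrivial word of length $\le m$ moves some point of $U^+$ off itself. It therefore acts nontrivially, which is exactly the $P_{\mathrm{FPP}}$ configuration.

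The main obstacle I expect is the genericity step. It must simultaneously avoid all the finitely many bad translates in every factor $v\in S$, and it needs the unboundedness hypothesis in each factor so that proximal elements exist there, not merely Zariski density. I would handle this by taking the product of the Zariski-open conditions over $v$ and using that $\Gamma$ is Zariski dense in $\prod_v\mathbf G_v$. That density follows from density of each projection together with the simplicity of the factors, via Goursat's lemma, after possibly passing to a finite-index subgroup; finite index is harmless for girth by Akhmedov's principles. The contraction estimates themselves are routine consequences of proximality.
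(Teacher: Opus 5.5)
\textbf{Genuine gap: the reduction to $P_{\mathrm{FPP}}$ cannot work.} You propose to reduce to $P_{\mathrm{FPP}}$, but $P_{\mathrm{FPP}}$ does not follow from the hypotheses, and it actually fails for some groups the theorem covers.

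\emph{A counterexample.} Take $\Gamma=\Gamma_1\times\Gamma_2$ with $\Gamma_1<\mathrm{PGL}_2(\mathbb R)$ and $\Gamma_2<\mathrm{PGL}_2(\mathbb Q_p)$ finitely generated, Zariski dense and unbounded (e.g.\ Schottky groups). Choose nontrivial $a\in\Gamma_1\times\{e\}$ and $b\in\{e\}\times\Gamma_2$. Apply $P_{\mathrm{FPP}}$ in the zero-error form of Remark~\ref{rem:fpp-zero-error} with $F=\{a^{\pm1},b^{\pm1}\}$ and $n=2$. Take $s\neq t$ in $E$, put $U_t=B_t^+\cup B_t^-$, and pick $q\in U_s$. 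We have $t^{\pm1}(\Gamma\setminus U_t)\subseteq U_t$ and $cU_t\cap(U_t\cup U_s)=\varnothing$ for $c\in F$. Reading $[a,tbt^{-1}]=atbt^{-1}a^{-1}tb^{-1}t^{-1}$ from the right then gives $[a,tbt^{-1}]q\in aU_t$, which misses $U_s$, so $[a,tbt^{-1}]q\neq q$. But $[a,tbt^{-1}]=e$ in this group. In general, this ping-pong shows $P_{\mathrm{FPP}}$ implies finite retraction discrimination, which Proposition~\ref{prop:commuting-obstruction} excludes here.

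\emph{Why the flag dynamics cannot give it.} The repelling region of a proximal element on $\mathcal B$ is a neighborhood of the whole non-transversality locus $Y_{x^-}$, not of a point. For a product flag space, the locus of $(y_v)_v$ contains $Y_{y_1}\times\prod_{v\neq1}\mathcal B_v$, so any two such loci meet. Hence the mutual disjointness in Definition~\ref{def:fpp}(2) cannot be obtained by pulling back along an orbit map.

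\emph{Mismatch with the definition.} What you actually sketch (words of length $\le m$ in modified generators) is not Definition~\ref{def:fpp}. So the claim that your configuration ``is exactly $P_{\mathrm{FPP}}$'' is unsupported.

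\textbf{The fix, which is the paper's route.} Apply Theorem~\ref{thm:nakamura} directly to $\Gamma\curvearrowright\mathcal B$.
\begin{itemize}
\item Nakamura's conditions tolerate large, overlapping repelling regions. The element $\sigma^r$ only has to send the specific set $\{x\}\cup U_\tau\cup\bigcup_a aU_\tau$ into $U_\sigma$. This holds once that set avoids a neighborhood of $Y_{x_\sigma^+}\cup Y_{x_\sigma^-}$.
\item This needs two ping-pong elements and a basepoint. So your genericity must relate \emph{different} conjugates, not a single $g$ and its $w$-translates.
\item Concretely, let $\gamma_0^{\pm1}$ be proximal with poles $x^\pm$. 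The paper finds $s_1,s_2,s_3\in\Gamma_c$ such that the points $as_ix^\delta$ ($a\in F\cup\{e\}$) are pairwise distinct and $as_jx^\eta\notin Y_{s_ix^\delta}$ for $i\neq j$. This is Lemma~\ref{lem:vig-three-position}, which uses faithfulness of the flag action and Vigdorovich's Proposition~4.4.
\item It then sets $\sigma=s_1\gamma_0^{m_1}s_1^{-1}$, $\tau=s_2\gamma_0^{m_2}s_2^{-1}$ and $x=s_3x^+$.
\end{itemize}

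\textbf{Two smaller problems.}
\begin{itemize}
\item Goursat does not give Zariski density in the product. If two $\mathbb K_v$ coincide, a diagonally embedded $\Gamma$ has the diagonal as its Zariski closure. What is needed is $S$-Zariski density, which the paper takes from Vigdorovich.
\item Finite index is not harmless in the direction you need. Infinite girth passes down to finite-index subgroups, not up in general ($\mathbb Z<D_\infty$). The paper sidesteps this by taking $\sigma,\tau\in\Gamma_c$ while running Nakamura's criterion on a generating set of $\Gamma$ itself.
\end{itemize}
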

After Theorem~\ref{thm:intro-vig} was obtained, Azer Akhmedov pointed out
that the unboundedness assumption can be removed and that Zariski density
is needed for only one coordinate projection.  His observation uses the
quotient principle rather than the Tits alternative; see
Theorem~\ref{thm:akhmedov-one-projection} below.  We thank Azer Akhmedov for
pointing out this strengthening and for suggesting its proof.

We now turn our attention to cubical dynamics of non-elementary actions. Here, non-elementary means that the action has no finite orbit in
$X\cup\bdv X$. The last three authors and Parija proved related girth
results for lattices in automorphism groups of CAT(0) cube complexes
\cite{BanerjeeGulbrandsenMishraParija2024}.  We remove the
lattice hypothesis using the random-walk machinery of
Fern\'os--L\'ecureux--Math\'eus \cite{FernosLecureuxMatheus2018} together
with a deterministic finite-coset argument.

\begin{maintheorem}\label{thm:intro-cubical}
Let $X$ be a non-Euclidean, finite-dimensional, second countable
CAT(0) cube complex.  Let $G$ be a finitely generated group acting on $X$
essentially and non-elementarily by cubical automorphisms.  Then
$\Gir(G)=\infty$.
\end{maintheorem}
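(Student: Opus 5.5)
The plan is to verify the finite ping-pong condition $P_{\mathrm{FPP}}$ (Definition~\ref{def:fpp}) for the action of $G$ on the Roller boundary $\partial X$, or on a suitable $G$-invariant subset, and then to conclude with Theorem~\ref{thm:fpp-girth}. Concretely, Nakamura's criterion \cite[Proposition~2]{Nakamura2014} asks for the following data. Fix a finite generating set $S=\{s_1,\dots,s_k\}$ of $G$ and an integer $m$. We need an element $g\in G$ whose positive and negative powers contract a large part of the boundary into small neighbourhoods of an attracting and a repelling point. We also need these fixed points to be moved off one another by all short words in $S$. Given this, the generating set $\{g^{N}, s_i g^{N}\}$ (or a similar twisted family) has no short relations, by ping-pong.

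The first step is to produce contracting elements. Since $X$ is finite-dimensional, non-Euclidean, and the action is essential and non-elementary, we may pass to the essential core and, using the product decomposition, to an irreducible factor on which $G$ still acts essentially and non-elementarily (after a finite-index subgroup). Here the Caprace--Sageev machinery gives strongly separated hyperplanes and hence \emph{contracting} (rank-one) isometries $g$. Each such $g$ has a pair of points $\xi^\pm$ in the regular boundary $\bdreg X$. Moreover, $g^{\pm n}$ sends the complement of any neighbourhood of $\xi^{\mp}$ into an arbitrarily small neighbourhood of $\xi^{\pm}$, with neighbourhoods given by halfspaces bounded by strongly separated hyperplanes.

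The second step is to choose $g$ in general position with respect to the finitely many elements $w\in B_S(m)\setminus\{1\}$. We need $w\xi^\pm\neq\xi^{\pm}$ and $w\xi^{+}\ne \xi^-$ for all these $w$. For this we use the random-walk results of Fern\'os--L\'ecureux--Math\'eus \cite{FernosLecureuxMatheus2018}. A random walk driven by a generating measure converges almost surely to a point of $\bdreg X$, the hitting measure $\nu$ is non-atomic, and for large $n$ the element $Z_n$ is contracting with probability tending to one, with endpoints distributed approximately like $\nu\otimes\check\nu$.

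The main obstacle is elements $w$ that fix $\nu$-positive sets, possibly a finite normal subgroup or elements acting trivially on the relevant factor. Here I would use the deterministic finite-coset argument. The set of $w\in B_S(m)$ fixing a $\nu$-generic point lies in a finite set. Non-atomicity and ergodicity of $\nu$ imply that each such nontrivial $w$ either fixes $\nu$-almost nothing or lies in the kernel $K$ of the action on the factor. Any kernel $K$ is handled by Akhmedov's quotient principle, applied to the quotient $G/K$ with $K$ finite or at least harmless. This is the delicate point: showing that the pointwise stabilizer of $\bdreg X$ causes no trouble.

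With these conditions in hand, choosing $N$ large gives disjoint ping-pong neighbourhoods $U_w^{\pm}$ for the conjugates $w g w^{-1}$. Then $P_{\mathrm{FPP}}$ holds, and Theorem~\ref{thm:fpp-girth} gives $\Gir(G)=\infty$.
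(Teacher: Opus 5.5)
\textbf{There is a genuine gap.} It sits exactly where you say the argument is delicate. The property you aim for is false for groups covered by the theorem, and no quotient repairs this.

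Take $G=F_2\times F_2$ acting on the product of its Cayley trees (Example~\ref{ex:product-free-groups}), and nontrivial $a\in F_2\times\{e\}$, $b\in\{e\}\times F_2$. Suppose $G$ had $P_{\mathrm{FPP}}$. Then Remark~\ref{rem:fpp-zero-error} with $F=\{a^{\pm1},b^{\pm1}\}$ gives distinct $\sigma,\rho$ with disjoint nonempty $U_\sigma,U_\rho$, with $\sigma^{r}(G\setminus U_\sigma)\subseteq U_\sigma$ for $r\neq0$, and with $cU_\sigma\cap(U_\sigma\cup U_\rho)=\varnothing$ for $c\in F$. Feed $x\in U_\rho$ through $a\sigma b\sigma^{-1}a^{-1}\sigma b^{-1}\sigma^{-1}$ from the right. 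The point alternately enters and leaves $U_\sigma$ and ends in $aU_\sigma$, which misses $U_\rho$. So $[a,\sigma b\sigma^{-1}]\neq e$, although $\sigma b\sigma^{-1}\in\{e\}\times F_2$ commutes with $a$. Correspondingly, your requirement that every short $w\neq e$ move the poles cannot be met, since $(e,b_2)$ fixes $\bdreg T_1$ pointwise.

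For $F_2\times F_2$ you could still quotient by the kernel $\{e\}\times F_2$ of one factor. But such a kernel is normal only in the factor-preserving subgroup $G^0$. Consider $G=F_2\wr\mathbb Z/2$ acting on $T\times T$ with the swap, which also satisfies all hypotheses. Every $g\in G$ has $g^2\in F_2\times F_2$. In any infinite quotient you can choose $a,b$ as above with nontrivial images, since the swap exchanges the factors. Then $[\bar a,z^2\bar b z^{-2}]$ is a mixed identity there. The same ping-pong with $\sigma^{2}$ shows that no infinite quotient of $G$ has $P_{\mathrm{FPP}}$.

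Passing to $G^0$ does not help either. Infinite girth is not known to ascend from finite-index subgroups, and it fails for $\mathbb Z<D_\infty$. Your ergodicity dichotomy is also unfounded, because $\Fix(w)$ is not $G$-invariant.

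\textbf{The missing idea is that no faithfulness or strong faithfulness is needed.} The paper applies Nakamura's criterion (Theorem~\ref{thm:nakamura}) to $G$ itself, for an arbitrary finite generating set $\{\gamma_j\}$. It uses the possibly non-faithful action on the vertex set $Y=\bigsqcup_k X_k^0$ of all factors. Only one basepoint has to be moved, and only the generators, not all short words, must carry the $\sigma$-sets off the $\tau$-sets. The construction has three parts:
\begin{enumerate}
\item It takes $\sigma,\tau\in G^0$ regular with skewering data in every factor at once (Proposition~\ref{prop:FLM-regular}). In each factor the Fern\'os--L\'ecureux--Math\'eus skewering times have density $>1-1/(2p)$, so these sets intersect.
\item It lets $U_\sigma,U_\tau$ be unions, over all factors, of deep halfspaces of the two skewered chains.
\item It chooses $\tau$ so that its poles avoid the poles of $\sigma$ and their $\gamma_j^{\pm1}$-translates (Theorem~\ref{thm:general-position}). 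This uses that pole stabilizers have infinite index, by non-elementarity through the associated visual points, together with Neumann's lemma on coverings by finitely many cosets.
\end{enumerate}
Simultaneous regularity in all factors is what lets the ping-pong run for generators that permute factors. Your reduction to a single irreducible factor of a finite-index subgroup throws exactly this away.
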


Finally, we study the finite-index problem for infinite girth.  We show that
if a finite-index subgroup of a finitely generated group maps onto a
non-virtually-solvable linear group, then the ambient group has infinite
girth.  In particular, every finitely generated large group has infinite
girth, answering \cite[Question~1]{AkhmedovMishra2026}.
We finish the paper by comparing our two techniques.

\section{Girth and two general mechanisms}\label{sec:girth}

\subsection{Mixed identities and finite retraction discrimination}

Let $S$ be a finite generating set of a group $\Gamma$.  A word in the
formal alphabet $S^{\pm1}$ is reduced if no letter is immediately followed
by its formal inverse, and cyclically reduced if, in addition, the first and
last letters are not formal inverses.  We write $\Gir(\Gamma,S)$ for the
smallest length of a nonempty cyclically reduced word representing the identity,
with the convention $\Gir(\Gamma,S)=\infty$ if no such word exists.  The
girth of $\Gamma$ is $\Gir(\Gamma)=\sup_S\Gir(\Gamma,S)$, where $S$
ranges over all finite generating sets.  A few examples of groups of infinite girth are non-abelian free groups,
$\mathrm{SL}(n,\mathbf Z)$ for $n\geq2$ \cite{Akhmedov2003}, and
Thompson's group $F$, whose infinite girth was first proved in
\cite{Brin2010,AkhmedovSteinTaback2011}.
For a finitely generated group $\Gamma$, let $d(\Gamma)$ denote the minimal
cardinality of a generating set.  Following
\cite[Definition~4.7]{Akhmedov_2025}, we let
\[
\Gir_k(\Gamma)=
\sup\{\Gir(\Gamma,S):\langle S\rangle=\Gamma,\ |S|\leq k\}.
\]
\begin{definition}\label{def:discrimination}
Let $\widetilde\Gamma=\Gamma*\langle z\rangle$, where $z$ is a formal
generator of infinite order.  For $\gamma\in\Gamma$, let
$\varphi_\gamma\colon\widetilde\Gamma\to\Gamma$ be the homomorphism which
fixes $\Gamma$ pointwise and sends $z$ to $\gamma$.  We say that $\Gamma$
has \emph{finite retraction discrimination} if, for every finite set
$E\subset\widetilde\Gamma\setminus\{e\}$, there is $\gamma\in\Gamma$ such
that $\varphi_\gamma(w)\neq e$ for every $w\in E$.  Equivalently, $\Gamma$
discriminates $\Gamma*\langle z\rangle$ over the retractions
$\varphi_\gamma$. We say that the net
$(\varphi_{\gamma_\nu})_{\nu\in\mathcal D}$ is
\emph{asymptotically injective} if, for every
$w\in\widetilde\Gamma\setminus\{e\}$, one has
$\varphi_{\gamma_\nu}(w)\neq e$ eventually.
\end{definition}

We recall the relation with mixed identities.  A nontrivial group $\Gamma$
is \emph{mixed-identity-free}, or MIF, if for every nontrivial
$w\in\Gamma*\langle z\rangle$ there is $\gamma\in\Gamma$ such that
$\varphi_\gamma(w)\neq e$.  By \cite[Remark~5.1]{HullOsin2016}, this
one-variable formulation agrees with the usual definition using finitely
many variables.  Taking $E=\{w\}$ in Definition~\ref{def:discrimination}
shows that a group with finite retraction discrimination is MIF\@.  For
countable groups, the converse is also true, by
\cite[Proposition~5.3]{HullOsin2016}.

\begin{lemma}\label{lem:asymptotic-implies-finite}
If $\Gamma$ admits an asymptotically injective net of retractions, then it
has finite retraction discrimination.
\end{lemma}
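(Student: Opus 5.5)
The argument is a direct unwinding of the definitions, using only that a directed set lets one pass beyond finitely many indices at once. Let $(\varphi_{\gamma_\nu})_{\nu\in\mathcal D}$ be an asymptotically injective net of retractions, indexed by a directed set $\mathcal D$. Fix a finite set $E=\{w_1,\dots,w_n\}\subset\widetilde\Gamma\setminus\{e\}$. We must produce one $\gamma\in\Gamma$ with $\varphi_\gamma(w_i)\neq e$ for all $i$.

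First, for each $i$, asymptotic injectivity applied to the nontrivial element $w_i$ gives an index $\nu_i\in\mathcal D$ such that
\[
\varphi_{\gamma_\nu}(w_i)\neq e \quad\text{for every } \nu\geq\nu_i .
\]

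Second, since $\mathcal D$ is directed and $E$ is finite, induction on $n$ gives an index $\nu_0$ with $\nu_0\geq\nu_i$ for all $i$. Setting $\gamma=\gamma_{\nu_0}$, we get $\varphi_\gamma(w_i)\neq e$ for every $i$, which is the required condition in Definition~\ref{def:discrimination}.

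Third, the case $E=\emptyset$ is vacuous, and any $\gamma$ works there (for instance $\gamma=e$). Also, if $\mathcal D$ is empty, asymptotic injectivity is meaningless. I would adopt the standard convention that directed sets are nonempty, so $\mathcal D\neq\emptyset$ and the upper bound $\nu_0$ exists.

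No step is a real obstacle. The only points needing care are the finite upper-bound argument in the directed set and the nonemptiness convention.
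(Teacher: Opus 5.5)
Your proof is correct and is the same argument as the paper's. For each $w\in E$ you choose an index beyond which $\varphi_{\gamma_\nu}(w)\neq e$, and then take a common upper bound $\nu_0$ in the directed set $\mathcal D$. Your remarks on $E=\varnothing$ and on the nonemptiness of $\mathcal D$ are harmless bookkeeping that the paper leaves implicit.
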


\begin{proof}
Let $(\varphi_{\gamma_\nu})_{\nu\in\mathcal D}$ be an asymptotically
injective net of retractions and let
$E\subset\widetilde\Gamma\setminus\{e\}$ be finite.  For each $w\in E$
choose an index $\nu_w$ after which $\varphi_{\gamma_\nu}(w)$ is nontrivial.
Since the index set is directed and $E$ is finite, there is one index $\nu_0$
which dominates all $\nu_w$.  Then
$\varphi_{\gamma_{\nu_0}}(w)\neq e$ for every $w\in E$.
\end{proof}

The next lemma keeps track of suitable powers needed to pass from
discrimination to large girth.  Attaching two different exponents to each
generator prevents a power of a torsion generator from collapsing into a
single $\Gamma$-syllable.

\begin{lemma}
\label{lem:expansion}
Let $T=\{t_1,\ldots,t_n\}\subset\Gamma\setminus\{e\}$ be finite and let
$m\geq1$.  For $1\leq i\leq n$, set $M_i=(2i-1)(m+1)$,
$N_i=2i(m+1)$ and $X_i=z^{-M_i}t_i z^{N_i}\in\widetilde\Gamma$.
Let $W$ be a nonempty reduced word of length at most $m$ in the formal
alphabet $\{\zeta^{\pm1},\xi_1^{\pm1},\ldots,\xi_n^{\pm1}\}$, and let
$W(z)$ be the element of $\widetilde\Gamma$ obtained by substituting
$\zeta=z$ and $\xi_i=X_i$.  Then $W(z)\neq e$.  More precisely, exactly one
of the following occurs.
\begin{enumerate}[label=\textup{(\alph*)}]
\item No letter $\xi_i^{\pm1}$ occurs, and $W(z)=z^c$ for an integer $c$
with $0<|c|\leq m$.
\item Some letter $\xi_i^{\pm1}$ occurs, and $W(z)$ has reduced free-product
normal form
\begin{equation}\tag{$\ast1$}\label{eq:expanded-form}
 z^{q_0}t_{i_1}^{\epsilon_1}z^{q_1}t_{i_2}^{\epsilon_2}
 \cdots t_{i_s}^{\epsilon_s}z^{q_s},
\end{equation}
where $s\geq1$, $\epsilon_j\in\{\pm1\}$, every exponent
$q_0,\ldots,q_s$ is nonzero, and
$|q_j|\leq2n(m+1)+m$ for $0\leq j\leq s$.
\end{enumerate}
Consequently, the set $E(T,m)$ of all elements $W(z)$ arising in this way is
a finite subset of $\widetilde\Gamma\setminus\{e\}$.
\end{lemma}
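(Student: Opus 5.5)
The plan is to track the free-product normal form of $W(z)$ directly. Write $W$ as a reduced word and group it into maximal blocks: maximal runs of letters $\zeta^{\pm1}$, which contribute $z^{a}$ with $0<|a|\leq m$ (a reduced run of $\zeta$'s is just $\zeta^{a}$ of length $|a|$), and the letters $\xi_i^{\pm1}$. If no $\xi$ occurs, $W=\zeta^c$ with $0<|c|\leq m$ since $W$ is nonempty and reduced, giving (a). Otherwise substitute $X_i=z^{-M_i}t_iz^{N_i}$ and $X_i^{-1}=z^{-N_i}t_i^{-1}z^{M_i}$. The concatenation is then an alternating product of $z$-powers and single letters $t_{i}^{\pm1}\in\Gamma\setminus\{e\}$. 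Each $t$-letter stays a separate $\Gamma$-syllable provided every $z$-exponent between consecutive $t$-letters is nonzero. The endpoints are automatic: the first and last exponents are $\pm M_i$ or $\pm N_i$ plus possibly a $\zeta$-run, hence nonzero because $M_i,N_i\geq m+1>m$.

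The key step, and the only real obstacle, is showing that the exponent between two consecutive $\xi$-letters never vanishes. Between $\xi_i^{\delta}$ and $\xi_j^{\eta}$ with an intervening $\zeta$-run of exponent $a$, where $|a|\leq m$ and possibly $a=0$, the $z$-exponent is
\[
 q = R(i,\delta)+a+L(j,\eta).
\]
Here $R(i,1)=N_i$, $R(i,-1)=M_i$, $L(j,1)=-M_j$ and $L(j,-1)=-N_j$. Every number $M_k,N_k$ is a distinct multiple of $m+1$, namely $(2k-1)(m+1)$ and $2k(m+1)$, so $R+L$ is a multiple of $m+1$. If $R+L\neq0$, then $|R+L|\geq m+1>|a|$, so $q\neq0$.

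It remains to treat $R+L=0$. This forces the same symbol from $\{M_k,N_k\}$ on both sides, and the identification of multiples of $m+1$ is injective. Checking the cases: $R=N_i$ with $L=-N_j$ gives $j=i$, $\delta=1$, $\eta=-1$, that is $\xi_i\xi_i^{-1}$. Similarly, $R=M_i$ with $L=-M_j$ gives $\xi_i^{-1}\xi_i$. The mixed cases $N_i=M_j$ are impossible by parity of the coefficient. So $R+L=0$ only for an inverse pair. If $a=0$ that pair is adjacent, contradicting reducedness. If $a\neq0$, then $q=a\neq0$.

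This establishes the normal form $(\ast1)$ with $s\geq1$, the number of $\xi$-letters, and all $q_j\neq0$. The bound $|q_j|\leq 2n(m+1)+m$ is crude. End exponents are at most $N_n+m$. An interior exponent is a difference of two numbers in $[(m+1),2n(m+1)]$ of opposite signs plus $a$, so its absolute value is at most $(2n-1)(m+1)+m$. Since a nontrivial normal form with $s\geq1$ and a $\Gamma$-syllable $t_i^{\pm1}\neq e$ is nontrivial, we get $W(z)\neq e$. The two cases are exclusive by whether a $\xi$ occurs. Finiteness of $E(T,m)$ follows because there are finitely many words of length $\leq m$ in a finite alphabet.
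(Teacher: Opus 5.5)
Your proposal is correct and follows essentially the same argument as the paper. You split $W$ into maximal $\zeta$-runs separated by $\xi$-letters and note that the boundary exponents are nonzero because $M_i,N_i\geq m+1>m$. For interior exponents you use that the $2n$ numbers $M_i,N_i$ are distinct multiples of $m+1$, so a vanishing of $R+L$ forces an inverse pair $\xi_i^{\pm1}\,\xi_i^{\mp1}$, whose intervening $\zeta$-run must then be nonempty by reducedness. The bound $|q_j|\leq 2n(m+1)+m$ and the finiteness of $E(T,m)$ are handled exactly as in the paper.
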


\begin{proof}
If no $\xi$-letter occurs, reducedness forces $W$ to be a nonzero power of
$\zeta$, and (a) follows.  Suppose now that at least one $\xi$-letter occurs.
Collecting the maximal blocks of $\zeta$-letters, write
\begin{equation*}
 W=\zeta^{c_0}\xi_{i_1}^{\epsilon_1}\zeta^{c_1}
 \xi_{i_2}^{\epsilon_2}\cdots
 \xi_{i_s}^{\epsilon_s}\zeta^{c_s},
\end{equation*}
where $s\geq1$, $|c_j|\leq m$, and an interior exponent $c_j$ may be zero.
Since $W$ is reduced, $c_j=0$ implies
$(i_{j+1},\epsilon_{j+1})\neq(i_j,-\epsilon_j)$ for
$1\leq j\leq s-1$.

For $\epsilon\in\{\pm1\}$, write
$X_i^\epsilon=z^{-\lambda_i^\epsilon}t_i^\epsilon z^{\rho_i^\epsilon}$,
where $(\lambda_i^+,\rho_i^+)=(M_i,N_i)$ and
$(\lambda_i^-,\rho_i^-)=(N_i,M_i)$.  Substitution and collection of
adjacent powers of $z$ give the normal form as in
equation~\eqref{eq:expanded-form}, where
\begin{align*}
 q_0&=c_0-\lambda_{i_1}^{\epsilon_1},\\
 q_j&=\rho_{i_j}^{\epsilon_j}+c_j-
       \lambda_{i_{j+1}}^{\epsilon_{j+1}}
       \quad(1\leq j\leq s-1),\\
 q_s&=\rho_{i_s}^{\epsilon_s}+c_s.
\end{align*}
The estimate $|q_j|\leq2n(m+1)+m$ follows from $|c_j|\leq m$ and from
$m+1\leq\lambda_i^\epsilon,\rho_i^\epsilon\leq2n(m+1)$, the two bounds being
positive, so that
$\bigl|\rho_{i_j}^{\epsilon_j}-\lambda_{i_{j+1}}^{\epsilon_{j+1}}\bigr|
\leq(2n-1)(m+1)$.

We next show that none of the exponents $q_0,\ldots,q_s$ vanishes.  Since
$\lambda_{i_1}^{\epsilon_1}\geq m+1$ and $|c_0|\leq m$, one has
$q_0=c_0-\lambda_{i_1}^{\epsilon_1}\neq0$.  Likewise
$\rho_{i_s}^{\epsilon_s}\geq m+1$ and $|c_s|\leq m$, so
$q_s=\rho_{i_s}^{\epsilon_s}+c_s\neq0$.

For $1\leq j\leq s-1$, the $2n$ numbers $M_i$ and $N_i$ are pairwise
distinct multiples of $m+1$.  If
$\rho_{i_j}^{\epsilon_j}$ and
$\lambda_{i_{j+1}}^{\epsilon_{j+1}}$ are distinct, their difference has
absolute value at least $m+1$, and hence
$|q_j|\geq(m+1)-|c_j|\geq1$.  If they are equal, then the definitions force
$i_j=i_{j+1}$ and $\epsilon_{j+1}=-\epsilon_j$.  Since $W$ is reduced,
$c_j\neq0$, and therefore $q_j=c_j\neq0$.

Every $t_{i_j}^{\epsilon_j}$ is nontrivial and every displayed power of
$z$ is nonzero.  Thus, no two consecutive $\Gamma$-syllables can merge, and
\eqref{eq:expanded-form} is a reduced free-product normal form.  In
particular, $W(z)\neq e$.

There are only finitely many formal reduced words of length at most $m$ in
the given finite alphabet, so $E(T,m)$ is finite.
\end{proof}

\begin{theorem}
\label{thm:discrimination-girth}
Let $\Gamma$ be finitely generated.  If $\Gamma$ has finite retraction
discrimination, then
$\Gir(\Gamma)=\infty$.
\end{theorem}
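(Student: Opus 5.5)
Fix $m\geq1$; we will produce a finite generating set $S$ with $\Gir(\Gamma,S)>m$. Start from a finite generating set $T=\{t_1,\ldots,t_n\}$ of $\Gamma$ consisting of nontrivial elements (discard $e$; the trivial group has no MIF property, and finite retraction discrimination already forces $\Gamma\neq\{e\}$, since for $\Gamma$ trivial $\varphi_\gamma(z)=e$). Form the elements $X_i=z^{-M_i}t_iz^{N_i}\in\widetilde\Gamma$ and the finite set $E(T,m)\subset\widetilde\Gamma\setminus\{e\}$ from Lemma~\ref{lem:expansion}. By finite retraction discrimination choose $\gamma\in\Gamma$ with $\varphi_\gamma(w)\neq e$ for all $w\in E(T,m)$. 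Define
\[
S=\{\gamma,\ \gamma^{-M_1}t_1\gamma^{N_1},\ \ldots,\ \gamma^{-M_n}t_n\gamma^{N_n}\},
\]
i.e.\ $S$ consists of $\varphi_\gamma(z)$ and the $\varphi_\gamma(X_i)$.

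First check that $S$ generates $\Gamma$: since $\gamma\in S$, each $t_i=\gamma^{M_i}\bigl(\gamma^{-M_i}t_i\gamma^{N_i}\bigr)\gamma^{-N_i}$ lies in $\langle S\rangle$, and $T$ generates. Next, a subtle point: the elements of $S$ might coincide with one another or with inverses of one another, in which case a short cyclically reduced word in $S^{\pm1}$ could trivially represent $e$ (e.g.\ $ab^{-1}$ with $a=b$). This is handled by the same lemma: reduced words of length $2$ like $\zeta\xi_i^{-1}$, $\xi_i\xi_j^{-1}$, $\xi_i\xi_j$, $\zeta^2$ (for $m\geq2$) lie in $E(T,m)$, so their images are nontrivial, which shows the listed $n+1$ elements are pairwise distinct, nontrivial, and no one is the inverse of another (apart from possible involutions, which are excluded by $\zeta^2$, $\xi_i^2\in E(T,m)$). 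Hence the formal alphabet $\{\zeta^{\pm1},\xi_i^{\pm1}\}$ is in bijection with $S^{\pm1}$ with formal inverses matching group inverses, so a word in $S^{\pm1}$ is (cyclically) reduced iff the corresponding formal word is.

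Now let $u$ be a nonempty cyclically reduced word in $S^{\pm1}$ of length at most $m$. Its formal counterpart $W$ is a nonempty reduced word of length $\leq m$, and the element of $\Gamma$ represented by $u$ equals $\varphi_\gamma(W(z))$, because $\varphi_\gamma$ is a homomorphism sending $z\mapsto\gamma$ and $X_i\mapsto\gamma^{-M_i}t_i\gamma^{N_i}$. Since $W(z)\in E(T,m)$, this is nontrivial. Thus $\Gir(\Gamma,S)>m$, and as $m$ was arbitrary, $\Gir(\Gamma)=\infty$. The only delicate step is the bookkeeping ensuring the identification of formal and actual alphabets (distinctness and non-inverse relations in $S$); if $m=1$ one may simply replace $m$ by $\max(m,2)$ at the outset so that the needed length-$2$ words belong to $E(T,m)$.
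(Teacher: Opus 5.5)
Your proposal is correct and follows the paper's proof essentially verbatim: the same generating set $S=\{\gamma,\gamma^{-M_i}t_i\gamma^{N_i}\}$, the same finite set $E(T,m)$ from Lemma~\ref{lem:expansion}, and the same use of length-one and length-two formal words to identify $S^{\pm1}$ with the formal alphabet. Your sample list of length-two words omits $\zeta\xi_i$; the paper's list includes it, and it is what rules out $\gamma=x_i^{-1}$.
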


\begin{proof}
It is enough to consider $m\geq2$.
The hypothesis forces $\Gamma\neq\{e\}$, since for $\Gamma=\{e\}$ no
retraction detects the nontrivial element
$z\in\Gamma*\langle z\rangle$.
Choose a finite generating set
$T=\{t_1,\ldots,t_n\}\subset\Gamma\setminus\{e\}$.  Let $E(T,m)$ be the
finite set from Lemma~\ref{lem:expansion}.  By
Definition~\ref{def:discrimination}, choose $\gamma\in\Gamma$ such that
$\varphi_\gamma(w)\neq e$ for every $w\in E(T,m)$.  For
$1\leq i\leq n$, let $x_i=\gamma^{-M_i}t_i\gamma^{N_i}$ and let
$S=\{\gamma,x_1,\ldots,x_n\}$.  Since
$t_i=\gamma^{M_i}x_i\gamma^{-N_i}$, the set $S$ generates $\Gamma$.

We first recall how the formal alphabet from
Lemma~\ref{lem:expansion} is related to the displayed generators.
For every nonempty reduced formal word $W$ of length at most $m$, its
expansion $W(z)$ belongs to $E(T,m)$, and hence
$\varphi_\gamma(W(z))\neq e$.  By construction,
$\varphi_\gamma(\zeta(z))=\gamma$ and
$\varphi_\gamma(\xi_i(z))=\gamma^{-M_i}t_i\gamma^{N_i}=x_i$.
More generally, if $W$ is a word in the formal alphabet
$\{\zeta^{\pm1},\xi_1^{\pm1},\ldots,\xi_n^{\pm1}\}$, then
$\varphi_\gamma(W(z))$ is obtained from $W$ by replacing $\zeta$ with
$\gamma$ and each $\xi_i$ with $x_i$.

Since $m\geq2$, we apply this observation to the following reduced
formal words $\{\zeta,\ \xi_i,\ \zeta\xi_i^{-1},\ \zeta\xi_i,\
\xi_i\xi_j^{-1},\ \xi_i\xi_j,\ \zeta^2,\ \xi_i^2\}$ to obtain their respective evaluations as
$\{\gamma,\ x_i,\ \gamma x_i^{-1},\ \gamma x_i,\
x_ix_j^{-1},\ x_ix_j,\ \gamma^2,\ x_i^2\}$,
where $i\neq j$. Each of the evaluations
\[
\gamma,\quad x_i,\quad \gamma x_i^{-1},\quad \gamma x_i,\quad
x_ix_j^{-1},\quad x_ix_j,\quad \gamma^2,\quad x_i^2
\]
is nontrivial.
For example, if $\gamma=x_i$, then
$\varphi_\gamma((\zeta\xi_i^{-1})(z))=\gamma x_i^{-1}=e$, contrary to
the choice of $\gamma$.  All the other possible coincidences are excluded
in the same way by the corresponding formal words in the displayed list.

Consequently, the elements
$\gamma,\gamma^{-1},x_1,x_1^{-1},\ldots,x_n,x_n^{-1}$ are pairwise
distinct.  Therefore the assignment
$\gamma^\epsilon\mapsto\zeta^\epsilon$ and
$x_i^\epsilon\mapsto\xi_i^\epsilon$, for $\epsilon\in\{\pm1\}$, is a
well-defined bijection from $S^{\pm1}$ onto
$\{\zeta^{\pm1},\xi_1^{\pm1},\ldots,\xi_n^{\pm1}\}$, and it commutes with
formal inversion.

Let $V$ be a nonempty reduced word of length at most $m$ in the alphabet
$S^{\pm1}$.  Replace each occurrence of $\gamma^{\pm1}$ by
$\zeta^{\pm1}$ and each occurrence of $x_i^{\pm1}$ by
$\xi_i^{\pm1}$, obtaining a formal word $W$.  The word $W$ is reduced. Otherwise, two consecutive translated letters would be formal inverses,
and, since the preceding bijection commutes with inversion, the
corresponding consecutive letters of $V$ would also be inverses.
Moreover, $W$ has the same length as $V$. Lemma~\ref{lem:expansion} gives $W(z)\in E(T,m)$, and by construction
$V=\varphi_\gamma(W(z))$.  The choice of $\gamma$ therefore implies that
$V\neq e$.  Thus every nonempty reduced $S$-word of length at most $m$ is
nontrivial, so $\Gir(\Gamma,S)>m$.  Since this holds for arbitrarily large
$m$, we conclude that $\Gir(\Gamma)=\infty$.
\end{proof}

\begin{remark}\label{rem:HO-redundancy}
As noted above, a group with finite retraction discrimination
is mixed-identity-free, and every finitely generated mixed-identity-free
group has infinite girth \cite[Proposition~5.4(d)]{HullOsin2016}.  We
include the direct argument because, once a retraction detecting $E(T,m)$
has been chosen, it produces a generating set with no relation
of length at most $m$.  Moreover, it only requires a single retraction to
detect that particular finite set, rather than every finite subset of
$\Gamma*\langle z\rangle$.  Nevertheless, \cite[Remark~5.5]{HullOsin2016} shows that
the number of generators may be chosen independently of $m$.
\end{remark}

\subsection{Nakamura's criterion}
The second mechanism is a dynamical criterion that concerns an action on an
arbitrary set.  We use the following result of Nakamura.

\begin{theorem}[Nakamura's criterion; \cite{Nakamura2014}]
\label{thm:nakamura}
Let $G$ act on a set $Y$, and let
$S=\{\gamma_1,\ldots,\gamma_\ell\}$ be a finite generating set.  Suppose
there are $\sigma,\tau\in G$, subsets $U_\sigma,U_\tau\subseteq Y$, and a
point $x\in Y$ such that
\begin{enumerate}
 \item $x\notin (U_\sigma\cup U_\tau)\cup
 \bigcup_{j=1}^{\ell}\bigcup_{\epsilon=\pm1}
 \gamma_j^\epsilon(U_\sigma\cup U_\tau)$,
 \label{eq:nak-1}
 \item
$\sigma^r\left(\{x\}\cup U_\tau\cup
 \bigcup_{j=1}^{\ell}\bigcup_{\epsilon=\pm1}
 \gamma_j^\epsilon U_\tau\right)
 \subseteq U_\sigma
 \quad\text{for every }r\in\Z\setminus\{0\}$,
 \label{eq:nak-2}
 \item
 $\tau^r\left(\{x\}\cup U_\sigma\cup
 \bigcup_{j=1}^{\ell}\bigcup_{\epsilon=\pm1}
 \gamma_j^\epsilon U_\sigma\right)
 \subseteq U_\tau
 \quad\text{for every }r\in\Z\setminus\{0\}$.
 \label{eq:nak-3}
\end{enumerate}
Then $G$ is noncyclic and $\Gir(G)=\infty$.
\end{theorem}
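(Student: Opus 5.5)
The plan is to imitate the proof of Theorem~\ref{thm:discrimination-girth}, with ping-pong on the point $x$ playing the role of the retraction. Fix $m\geq2$. I will build a generating set in which every nonempty reduced word of length at most $m$ expands to a product of nonzero powers of $\sigma$ and $\tau$ and letters $\gamma_j^{\pm1}$, of a shape that moves $x$ into $U_\sigma\cup U_\tau\cup\bigcup_{j,\epsilon}\gamma_j^\epsilon(U_\sigma\cup U_\tau)$. Condition~\eqref{eq:nak-1} then forbids such a word from being trivial.

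\textbf{Step 1: the generating set.} Choose pairwise distinct multiples of $m+1$, namely $M_j,N_j$ for $1\leq j\leq\ell$, together with $P,P'$. Put
\[
x_j=\sigma^{M_j}\tau^{P}\gamma_j\sigma^{P'}\tau^{N_j},\qquad S'=\{\sigma,\tau,x_1,\ldots,x_\ell\}.
\]
The set $S'$ generates $G$, since $\gamma_j=\tau^{-P}\sigma^{-M_j}x_j\tau^{-N_j}\sigma^{-P'}$.

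The letter $\gamma_j$ is flanked by a $\tau$-block on one side and a $\sigma$-block on the other. Passing to $x_j^{-1}$ reverses the order, but the two flanks remain of opposite types.

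\textbf{Step 2: the combinatorial lemma.} This is the analogue of Lemma~\ref{lem:expansion}. I will show that any nonempty reduced word $V$ of length at most $m$ in $S'^{\pm1}$, once expanded and adjacent powers of the same letter are collected, falls into one of two cases.

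\begin{enumerate}[label=\textup{(\roman*)}]
\item $V$ becomes a nonempty alternating product of nonzero powers of $\sigma$ and $\tau$.
\item $V$ becomes a product in which each $\gamma_j^{\pm1}$ is immediately preceded and followed by nonzero powers of different letters among $\sigma,\tau$, and the two extreme syllables are nonzero powers of $\sigma$ or $\tau$.
\end{enumerate}

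The exponent bookkeeping follows Lemma~\ref{lem:expansion}. Every padding exponent is at least $m+1$, while the inserted letters $\sigma^{\pm1},\tau^{\pm1}$ contribute at most $m$ to any block. The only possible cancellation is $\tau^{N_j}\tau^{-N_k}$, and similarly for $\sigma^{M_j}$. By distinctness this happens only when $j=k$, and then either a word-letter $\tau^c$ with $c\neq0$ sits in between, or the word contains $x_jx_j^{-1}$, which reducedness excludes. The flanks of each $\gamma$, with exponents $P$ and $P'$, are never touched by word letters, so they survive.

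\textbf{Step 3: ping-pong.} Read the expanded product from the right and apply it to $x$.

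\begin{enumerate}[label=\textup{(\roman*)}]
\item The first power $\sigma^r$ (respectively $\tau^r$) sends $x$ into $U_\sigma$ (respectively $U_\tau$), by condition~\eqref{eq:nak-2} or~\eqref{eq:nak-3}.
\item A further power of the other letter keeps the point in $U_\sigma$ or $U_\tau$, by~\eqref{eq:nak-2} and~\eqref{eq:nak-3}.
\item Suppose the point lies in $U_\tau$, having just come from a $\tau$-block, and we meet $\gamma_j^{\epsilon}$. It moves to $\gamma_j^{\epsilon}U_\tau$. The next block is a $\sigma$-power because the flanks are of opposite types, so~\eqref{eq:nak-2} returns the point to $U_\sigma$. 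The case with the roles of $\sigma$ and $\tau$ exchanged is symmetric.
\end{enumerate}

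Hence the image of $x$ lies in $U_\sigma\cup U_\tau$. By~\eqref{eq:nak-1}, $x\notin U_\sigma\cup U_\tau$, so $V\neq e$. This gives $\Gir(G,S')>m$, and since $m$ was arbitrary, $\Gir(G)=\infty$.

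Two loose ends remain. First, the letters of $S'^{\pm1}$ must be pairwise distinct, so that translating words into the formal alphabet preserves reducedness. This follows by applying the above argument to the length-two words, exactly as in Theorem~\ref{thm:discrimination-girth}. Second, $G$ is noncyclic: the same ping-pong shows that every nonempty reduced word in $\sigma,\tau$ moves $x$, so $\langle\sigma,\tau\rangle$ is free of rank two.

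\textbf{Main obstacle.} I expect Step~2 to be the hardest part. One must check that, after merging with the interspersed letters $\sigma^{c}$ and $\tau^{c}$, every $\gamma$ keeps flanks of opposite nonzero types, and that the first syllable applied to $x$ is genuinely a nonzero power of $\sigma$ or $\tau$. If the chosen padding admits some degenerate merge, I would first try lengthening the pad to $\sigma^{M_j}\tau^{A}\sigma^{B}\tau^{P}$ on the left and its mirror on the right. This gives a buffer of fixed alternating blocks that word letters cannot collapse.
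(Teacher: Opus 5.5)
Your proposal is correct and is essentially Nakamura's argument as the paper sketches it after the statement. You pad each $\gamma_j$ with large, pairwise distinct powers of $\sigma$ and $\tau$, so that in every reduced word of length at most $m$ each $\gamma_j^{\pm1}$ is flanked by a nonzero $\sigma$-power on one side and a nonzero $\tau$-power on the other; ping-pong on $x$ then shows the word is nontrivial. Your inner flanks $\tau^{P},\sigma^{P'}$ are harmless but unnecessary, since Nakamura's $\widehat\gamma_j=\sigma^{p_j}\gamma_j\tau^{-p_j}$ already works by the same exponent bookkeeping, and the degenerate merge you worry about in your final paragraph does not occur with your padding.
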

Nakamura proves the theorem by using the generating set
$\{\sigma,\tau,\widehat\gamma_1,\ldots,\widehat\gamma_\ell\}$,
with
$\widehat\gamma_j=\sigma^{p_j}\gamma_j\tau^{-p_j}$,
where the positive integers $p_j$ and their pairwise differences are larger
than a prescribed word length.  Conditions
\eqref{eq:nak-1}--\eqref{eq:nak-3} then give a ping-pong argument for every
reduced word below that length.  We shall verify these conditions directly
in Section~\ref{sec:cubical-prelim}.

We also use the following quotient permanence result, due to Akhmedov and
recorded as \cite[Proposition~5]{Nakamura2014}.

\begin{proposition}[Akhmedov's quotient principle]\label{prop:quotient}
If a finitely generated group $G$ surjects onto a noncyclic group $Q$ with
$\Gir(Q)=\infty$, then $\Gir(G)=\infty$.
\end{proposition}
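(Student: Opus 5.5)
The plan is to transport a generating set of large girth from $Q$ back to $G$ along the surjection $\pi\colon G\to Q$. The new generators must generate $G$ and not merely generate $G$ modulo $\ker\pi$. Fix $m\geq2$ and a finite generating set $\{s_1,\ldots,s_k\}$ of $G$. Since $\Gir(Q)=\infty$, choose a finite generating set $T=\{t_1,\ldots,t_n\}$ of $Q$ with $\Gir(Q,T)>L$, where $L$ is a large number fixed below in terms of $m$ and the data.

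The basic observation is the following. If a word in $T^{\pm1}$ of length less than $\Gir(Q,T)$ is nontrivial in the free group $F(T)$, then it is nontrivial in $Q$. Indeed, freely reducing it and then cyclically reducing a conjugate gives a nonempty cyclically reduced word of no greater length. This word would otherwise be a relation shorter than the girth.

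Since $Q$ is noncyclic and $\Gir(Q,T)\geq3$, we have $n\geq2$ and the letters $t_1^{\pm1},t_2^{\pm1}$ are pairwise distinct. Write $a=t_1$ and $b=t_2$.

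Next, build the generators.
\begin{itemize}
\item Lift each $t_i$ to some $g_i\in G$.
\item For each $j$, write $\pi(s_j)$ as a word $\sigma_j$ in $T^{\pm1}$ of length $\ell_j$.
\item Choose exponents $N_1<N_1'<N_2<N_2'<\cdots$, all larger than $m+\max_j\ell_j$ and with pairwise gaps larger than that as well, in the spirit of Lemma~\ref{lem:expansion}.
\item Let $\omega_j$ be the word $\sigma_j^{-1}a^{N_j}ba^{N_j'}$, and let $\widetilde\omega_j$ be the corresponding word in the $g_i$.
\item Put $h_j=s_j\widetilde\omega_j$ and $S'=\{g_1,\ldots,g_n,h_1,\ldots,h_k\}$.
\end{itemize}
Since $s_j=h_j\widetilde\omega_j^{-1}$, the set $S'$ generates $G$. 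Moreover $\pi(h_j)=y_j:=a^{N_j}ba^{N_j'}$ as elements of $Q$.

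Now take a nonempty reduced word $V$ of length at most $m$ in $S'^{\pm1}$. Its image $\pi(V)$ is represented by the word $\overline V$ in $T^{\pm1}$ obtained by substituting $t_i$ for $g_i$ and the word $a^{N_j}ba^{N_j'}$ for $h_j$. Thus $\overline V$ has length at most $m\,(2\max N'+1)$. We then take $L$ larger than this bound, which is legitimate because the exponents depend only on $m$ and the $\ell_j$, not on $T$ beyond the lengths $\ell_j$. (If one worries about the circularity here, fix the $\ell_j$ relative to a first generating set, and note that the argument only needs the $\ell_j$ to be bounded.)

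By the basic observation, it remains to show that $\overline V\neq e$ in $F(T)$. This is the main obstacle: a free-group cancellation argument analogous to Lemma~\ref{lem:expansion}. The intended reasoning runs as follows.
\begin{itemize}
\item Each occurrence of $h_j^{\pm1}$ contributes a block $a^{\pm N}b^{\pm1}a^{\pm N'}$.
\item Neighbouring letters $t_i^{\pm1}$ and blocks can only cancel powers of $a$.
\item The exponents are distinct and widely spaced, and a run of fewer than $m$ letters $t_i$ can absorb at most $m$ copies of $a$. Hence each $b^{\pm1}$ inside an $h$-block survives, except when two adjacent blocks are $h_j^{\epsilon}h_j^{-\epsilon}$ with nothing between them, which reducedness of $V$ forbids.
\item So if some $h$-letter occurs, the reduced form of $\overline V$ still contains a $b$; if none occurs, $\overline V$ is a reduced word in $T$, hence already nontrivial.
\end{itemize}
For the reduction from words in $S'$ to words in $T\cup\{y_j\}$ to respect reducedness, I would also check that the elements of $S'^{\pm1}$ are pairwise distinct. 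This follows by applying the same argument to two-letter words, as in the proof of Theorem~\ref{thm:discrimination-girth}.

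Hence $\pi(V)\neq e$, so $V\neq e$, and therefore $\Gir(G,S')>m$. Since $m$ was arbitrary, $\Gir(G)=\infty$.
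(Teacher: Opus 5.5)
The paper does not prove this proposition; it quotes it from \cite[Proposition~5]{Nakamura2014}, where it is attributed to Akhmedov. Your argument is a valid self-contained proof, essentially a quotient version of the padding trick in Lemma~\ref{lem:expansion}. One piece of bookkeeping needs repair, though.

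\textbf{The circularity.} You require the exponents to exceed $m+\max_j\ell_j$, where $\ell_j$ is the length of $\sigma_j$ with respect to the final $T$. But $T$ is chosen after $L$, and $L$ after the exponents, so this is circular. Your parenthetical remedy does not work. The $\sigma_j$ must be words in the final $T$: that is what makes $\widetilde\omega_j$ a word in the $g_i$, so that $S'$ generates $G$ and $\pi(h_j)=y_j$. Their $T$-lengths depend on $T$ and are not controlled a priori.

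\textbf{The fix.} The $\ell_j$ play no role at all, because $\overline V$ is built only from the $t_i$ and the blocks $a^{N_j}ba^{N_j'}$, never from $\sigma_j$. So choose in this order:
\begin{enumerate}
\item $N_j=(2j-1)(m+1)$ and $N_j'=2j(m+1)$, depending only on $m$ and $k$;
\item $L=m(4k(m+1)+1)$;
\item $T$ with $\Gir(Q,T)>L$;
\item only then the $\sigma_j$ and the $h_j$.
\end{enumerate}

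\textbf{The cancellation step.} With these choices your sketch is correct. Let $\overline w$ be the reduced run, of length at most $m$, between two consecutive blocks (or before the first, or after the last).
\begin{itemize}
\item If $\overline w$ is not a pure power of $a$, the adjacent outer $a$-powers of the blocks stay nonzero, because their exponents exceed $m$ in absolute value.
\item If $\overline w=a^c$ with $|c|\leq m$ (including $c=0$), the merged exponent is the end exponent of the left block plus $c$ plus the start exponent of the right block. The size and gap conditions make this nonzero unless $c=0$ and the blocks are $h_j^{\epsilon}h_j^{-\epsilon}$, which reducedness of $V$ excludes.
\end{itemize}
Hence every block letter $b^{\pm1}$ is flanked by nonzero $a$-powers and survives free reduction. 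Your distinctness check for $S'^{\pm1}$ via two-letter words also goes through with $m\geq2$.

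\textbf{What your route buys.} Compared with the citation in the paper, your construction produces an explicit generating set of $G$ of size $|T|+k$. Taking $k=d(G)$ shows that $\Gir_r(Q)=\infty$ implies $\Gir_{r+d(G)}(G)=\infty$. This is a quantitative form of the quotient principle, whereas Remark~\ref{rem:number-generators} notes that the principle as used in the paper gives no control on the number of generators.
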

\begin{remark}\label{rem:number-generators}
The preceding constructions give some information about the number of
generators needed to witness infinite girth.  Applying finite retraction
discrimination to a minimal generating set in the proof of
Theorem~\ref{thm:discrimination-girth} gives $\Gir_{d(\Gamma)+1}(\Gamma)=\infty$. Whenever Nakamura's hypotheses can be verified starting from a minimal
generating set, his construction gives
$\Gir_{d(\Gamma)+2}(\Gamma)=\infty$.
This applies to Theorems~\ref{thm:fpp-girth} and
\ref{thm:cubical-girth}, since their proofs begin with an arbitrary finite
generating set.  By contrast, the quotient principle in the form used
below is qualitative and does not by itself control the number of
generators. The optimum can equal $d(\Gamma)$. It remains interesting to
determine when the bounds $d(\Gamma)+1$ and $d(\Gamma)+2$ above can be
reduced to $d(\Gamma)$.
\end{remark}
\subsection{A finite ping-pong property}\label{sec:fpp-girth}

Ozawa introduced the following finite ping-pong condition in an
earlier version of \cite{Ozawa2026}.  To distinguish it from the current
formulation of $P_{\mathrm{PHP}}$, we denote it by $P_{\mathrm{FPP}}$.

\begin{definition}\label{def:fpp}
A group $\Gamma$ has property $P_{\mathrm{FPP}}$ if, for every finite set
$F\subseteq\Gamma\setminus\{e\}$, every $n\in\mathbb N$, and every
$\varepsilon>0$, there are a finite set $E\subseteq\Gamma$ and subsets
$B_t^+,B_t^-\subseteq\Gamma$, $t\in E$, such that:
\begin{enumerate}[label=\textup{(\arabic*)}]
\item $|E|\geq n$;
\item the family $\{B_t^+,B_t^-:t\in E\}$ is mutually disjoint;
\item $t(\Gamma\setminus B_t^-)\subseteq B_t^+$ for every $t\in E$;
\item for every $a\in F$,
\begin{equation*}
\left|\left\{(s,t)\in E^2:
 a(B_t^+\cup B_t^-)\cap(B_s^+\cup B_s^-)\neq\varnothing
\right\}\right|<\varepsilon|E|.
\end{equation*}
\end{enumerate}
\end{definition}

\begin{remark}\label{rem:fpp-zero-error}
The error term in condition~\textup{(4)} can be removed after passing to a
subfamily.  More precisely, let $F\subseteq\Gamma\setminus\{e\}$ be finite
and let $n\in\mathbb N$.  There is a finite set $E\subseteq\Gamma$, with
$|E|\geq n$, and subsets $B_t^+,B_t^-\subseteq\Gamma$, $t\in E$, satisfying
conditions~\textup{(2)} and~\textup{(3)} of Definition~\ref{def:fpp}, and
such that
\begin{equation}\tag{4'}\label{eq:fpp-zero-error}
 a(B_t^+\cup B_t^-)\cap(B_s^+\cup B_s^-)=\varnothing
\end{equation}
for every $a\in F$ and every $s,t\in E$.

Indeed, when $F=\varnothing$, apply Definition~\ref{def:fpp} with the given
$n$ and any positive error to obtain conditions~\textup{(2)} and~\textup{(3)},
while \eqref{eq:fpp-zero-error} is vacuous.  Thus assume $q:=|F|\geq1$.
Apply Definition~\ref{def:fpp} with $2n$ in place of $n$
and with $\varepsilon<1/(4q)$.  For $a\in F$, put
\begin{equation*}
 R_a=\left\{(s,t)\in E^2:
 a(B_t^+\cup B_t^-)\cap(B_s^+\cup B_s^-)\neq\varnothing\right\}.
\end{equation*}
Then $|R_a|<\varepsilon|E|$.  Let $R=\bigcup_{a\in F}R_a$, and let
$V\subseteq E$ be the set of all elements that occur as a coordinate of a
pair in $R$.  Then
\begin{equation*}
 |V|\leq2|R|\leq2\sum_{a\in F}|R_a|<2q\varepsilon|E|<\frac{|E|}{2}.
\end{equation*}
Hence $E_0:=E\setminus V$ satisfies $|E_0|>|E|/2\geq n$.  Conditions
\textup{(2)} and~\textup{(3)} remain valid after restricting to $E_0$.  If
\eqref{eq:fpp-zero-error} failed for some $a\in F$ and $s,t\in E_0$, then
$(s,t)\in R$, so $s,t\in V$, contrary to $s,t\in E_0$.  Replacing $E$ by
$E_0$ gives \eqref{eq:fpp-zero-error}.
\end{remark}

\begin{remark}\label{rem:fpp-implies-php}
It is easy to see that property $P_{\mathrm{FPP}}$ implies property
$P_{\mathrm{PHP}}$ as defined in \cite[Section~8]{Ozawa2026}. The other direction is not true in general since $P_{\mathrm{PHP}}$ is closed under product (as remarked in \cite{Ozawa2026}), whereas $P_{\mathrm{FPP}}$ is not.  
\end{remark}

\begin{theorem}\label{thm:fpp-girth}
Let $\Gamma$ be a finitely generated group with property $P_{\mathrm{FPP}}$.
Then $\Gir(\Gamma)=\infty$.
\end{theorem}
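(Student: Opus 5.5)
Our plan is to verify the hypotheses of Nakamura's criterion (Theorem~\ref{thm:nakamura}) for the left action of $\Gamma$ on itself, $Y=\Gamma$. Fix a finite generating set $S=\{\gamma_1,\ldots,\gamma_\ell\}$, which we may assume avoids $e$. Put
\[
F=\{\gamma_j^{\epsilon}:1\leq j\leq\ell,\ \epsilon=\pm1\}\setminus\{e\}.
\]
Apply Remark~\ref{rem:fpp-zero-error} with this $F$ and with $n=3$. This gives a set $E$ with $|E|\geq3$ and sets $B_t^\pm$ satisfying conditions~(2), (3) and~\eqref{eq:fpp-zero-error}. We may enlarge $F$ by finitely many further elements if the verification below requires it, since Remark~\ref{rem:fpp-zero-error} allows any finite $F$.

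Choose three distinct elements $s,t,u\in E$ and set $\sigma=s$, $\tau=t$. Put $U_\sigma=B_s^+\cup B_s^-$ and $U_\tau=B_t^+\cup B_t^-$. We then verify the conditions one at a time.
\begin{itemize}
\item[(2)] Condition~(3) of Definition~\ref{def:fpp} shows that $s$ maps $\Gamma\setminus B_s^-$ into $B_s^+$. Disjointness gives $B_s^+\subseteq\Gamma\setminus B_s^-$, so $s^r(\Gamma\setminus B_s^-)\subseteq B_s^+$ for every $r\geq1$. For negative powers we need $s^{-1}(\Gamma\setminus B_s^+)\subseteq B_s^-$. This is the standard ping-pong dual: if $y\notin B_s^+$ and $s^{-1}y\notin B_s^-$, then $y=s(s^{-1}y)\in B_s^+$, a contradiction. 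Hence $s^{-r}(\Gamma\setminus B_s^+)\subseteq B_s^-$. So every nonzero power of $s$ maps $\Gamma\setminus U_\sigma$ into $U_\sigma$. It remains to check that the set in Nakamura's condition~\eqref{eq:nak-2}, namely $\{x\}\cup U_\tau\cup\bigcup\gamma_j^{\epsilon}U_\tau$, avoids $U_\sigma$. The set $U_\tau$ avoids $U_\sigma$ by disjointness. The translates $\gamma_j^\epsilon U_\tau$ avoid $U_\sigma$ by~\eqref{eq:fpp-zero-error} applied with $a=\gamma_j^\epsilon$. For the remaining point $x$, see the choice below.
\item[(3)] This is symmetric in $\sigma$ and $\tau$.
\end{itemize}

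The main obstacle is choosing the base point $x$. We need $x\notin U_\sigma\cup U_\tau$ and $x\notin\gamma_j^{\epsilon}(U_\sigma\cup U_\tau)$, which is condition~\eqref{eq:nak-1}. The third element $u$ is what makes this possible. Pick $x\in B_u^+$. The set $B_u^+$ is nonempty, because $u$ maps the complement of $B_u^-$ into it, and that complement is nonempty since it contains $B_s^+$. Moreover $u\,\gamma$ lies in $B_u^+$ for any $\gamma\notin B_u^-$.
\begin{itemize}
\item By disjointness, $x\notin U_\sigma\cup U_\tau$.
\item If $x\in\gamma_j^\epsilon U_\sigma$, then $\gamma_j^{-\epsilon}x\in U_\sigma$. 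Hence $\gamma_j^{-\epsilon}(B_u^+\cup B_u^-)\cap(B_s^+\cup B_s^-)\neq\varnothing$. This contradicts~\eqref{eq:fpp-zero-error}, because $F$ is closed under inverses. The same argument rules out $x\in\gamma_j^\epsilon U_\tau$.
\end{itemize}
The point $x$ lies in $U_\tau$'s complement and in $U_\sigma$'s complement, so the dynamical inclusions from the previous paragraph apply to it. This completes conditions~\eqref{eq:nak-1}--\eqref{eq:nak-3}.

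Nakamura's criterion now yields $\Gir(\Gamma)=\infty$. One degenerate point needs a remark. If some $\gamma_j^\epsilon=e$ were allowed, its translates would coincide with the untranslated sets, and those are already handled by disjointness. So excluding $e$ from $F$ is harmless.
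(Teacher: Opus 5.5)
Your argument is correct and is essentially the paper's proof. Both use Nakamura's criterion for the left action on $Y=\Gamma$, Remark~\ref{rem:fpp-zero-error} with $n=3$, the third element of $E$ to supply the base point, condition~\eqref{eq:fpp-zero-error} for the translate disjointness, and the dual inclusion $\sigma^{-1}(\Gamma\setminus B_\sigma^+)\subseteq B_\sigma^-$ for negative powers.

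The only slip is your justification that $B_u^+\neq\varnothing$. It presupposes $B_s^+\neq\varnothing$, which by symmetry is circular. Either observe that $B_u^-=\Gamma$ would force $B_s^\pm=\varnothing$ and violate condition~(3) for $s$. Or, as the paper does, take $x\in B_u^+\cup B_u^-$; this set is nonempty, since otherwise condition~(3) for $u$ would give $u\Gamma\subseteq\varnothing$.
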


\begin{proof}
Fix a finite generating set $S=\{\gamma_1,\ldots,\gamma_\ell\}$, with the
identity omitted, and put
$F=\{\gamma_j^\epsilon:1\leq j\leq\ell,\ \epsilon=\pm1\}$.  Apply
Remark~\ref{rem:fpp-zero-error} with this $F$ and $n=3$.  Choose distinct
$\sigma,\tau,\rho\in E$, and put
$U_t=B_t^+\cup B_t^-$ for $t\in\{\sigma,\tau,\rho\}$. The set $U_\rho$ is nonempty.  Indeed, otherwise
$B_\rho^+=B_\rho^-=\varnothing$, contradicting
$\rho(\Gamma\setminus B_\rho^-)\subseteq B_\rho^+$.  Choose
$x\in U_\rho$. Condition~\textup{(2)} gives that $U_\sigma,U_\tau,U_\rho$ are pairwise
disjoint, while \eqref{eq:fpp-zero-error} gives
 $\gamma_j^\epsilon U_\sigma\cap U_\rho=
 \gamma_j^\epsilon U_\tau\cap U_\rho=\varnothing$
for every $j$ and $\epsilon=\pm1$.  Hence
$x\notin(U_\sigma\cup U_\tau)\cup
 \bigcup_{j=1}^{\ell}\bigcup_{\epsilon=\pm1}
 \gamma_j^\epsilon(U_\sigma\cup U_\tau)$,
which is the first condition of Nakamura's criterion. Again by
\eqref{eq:fpp-zero-error},
$U_\sigma\cap\gamma_j^\epsilon U_\tau=\varnothing$ for every $j$ and
$\epsilon=\pm1$.  Thus
$ \{x\}\cup U_\tau\cup
 \bigcup_{j=1}^{\ell}\bigcup_{\epsilon=\pm1}\gamma_j^\epsilon U_\tau
 \subseteq\Gamma\setminus U_\sigma$.
We now verify the required conditions.  Since
$B_\sigma^+\cap B_\sigma^-=\varnothing$, one has
$B_\sigma^+\subseteq\Gamma\setminus B_\sigma^-$.  Condition~\textup{(3)}
therefore gives $\sigma B_\sigma^+\subseteq B_\sigma^+$ and hence
$\sigma^r(\Gamma\setminus U_\sigma)\subseteq B_\sigma^+$ for every
$r\geq1$.  The same condition also implies
$\sigma^{-1}(\Gamma\setminus B_\sigma^+)\subseteq B_\sigma^-$.  Since
$B_\sigma^-\subseteq\Gamma\setminus B_\sigma^+$, it follows that
$\sigma^r(\Gamma\setminus U_\sigma)\subseteq B_\sigma^-$ for every
$r\leq-1$.  This proves the second condition of Nakamura's criterion.  The
symmetric argument gives the third condition for $\tau$.
Theorem~\ref{thm:nakamura} now gives
$\Gir(\Gamma)=\infty$.
\end{proof}
We now proceed to show that all acylindrically hyperbolic groups with trivial finite radical have property $P_{\mathrm{FPP}}$. Before that, let us briefly recall the notion of acylindrically hyperbolic groups. 

Let $(X,d)$ be a metrizable space. An action $\Gamma\curvearrowright (X,d)$ is called acylindrical if for every $\epsilon > 0$, there are $\eta, M > 0$ such that for any $x, y \in X$ with $d(x, y) \geq \eta$, the number of elements $s \in \Gamma$ satisfying $d(x, sx) \leq \epsilon$ and $d(y, sy) \leq \epsilon$ can be at most $M$. A group $\Gamma$ is called \emph{acylindrically hyperbolic} if it admits a non-elementary acylindrically hyperbolic action on a hyperbolic space.

Of course, every non-elementary hyperbolic group is acylindrically hyperbolic. Further examples of acylindrically hyperbolic groups consist of non-(virtually) cyclic groups hyperbolic relative to proper subgroups, $\text{Out}(F_n)$ for $n > 1$, many mapping class groups, and non-(virtually cyclic) groups acting properly on proper CAT($0$)-spaces and containing rank one elements, to name a few (for more details, the readers may see \cite[Section~8]{osin2016acylindrically} and the references therein).

For an acylindrically hyperbolic group $\Gamma$, let $R(\Gamma)$ denote its
finite radical, namely its maximal finite normal subgroup.  
As shown in \cite[Version~5]{Ozawa2026}, acylindrically hyperbolic groups with trivial finite radical satisfy property
$P_{\mathrm{FPP}}$.  We include a proof for completeness.  The argument
appeared in \cite[Lemma~13]{Ozawa2026} and is modeled on
\cite[Lemma~4]{10.1007/BFb0074887}.

\begin{proposition}[Ozawa]\label{prop:ah-fpp}
Let $\Gamma$ be an acylindrically hyperbolic group with trivial finite
radical.  Then $\Gamma$ has property $P_{\mathrm{FPP}}$.
\end{proposition}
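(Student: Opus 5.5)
The plan is to realize the sets $B_t^\pm$ as preimages, under an orbit map, of shadow-type neighbourhoods of the fixed points of many independent loxodromic elements, and to use acylindricity together with triviality of $R(\Gamma)$ to make these neighbourhoods disjoint and almost invariant under the finite set $F$. Fix a non-elementary acylindrical action of $\Gamma$ on a hyperbolic space $Z$, with basepoint $o\in Z$ and Gromov boundary $\partial Z$. Since $R(\Gamma)=\{e\}$, the maximal finite subgroup normalized by $\Gamma$ is trivial. Recall that for a loxodromic $g$ the elementary closure $E(g)=\Stab\{g^{\pm\infty}\}$ is virtually cyclic. The finite kernel of the action of $\Gamma$ on $\partial Z$ is contained in $R(\Gamma)$, so it is trivial.

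\textbf{Step 1.} First I will produce the independent loxodromics. Given $F$, I choose loxodromic elements $g_1,\ldots,g_N$ with $N\gg n/\varepsilon$ whose $2N$ fixed points $g_k^{\pm\infty}$ are pairwise distinct. In addition, I require that for every $a\in F$ one has $a\{g_k^{\pm\infty}\}\cap\{g_l^{\pm\infty}\}=\varnothing$ for all but few pairs $(k,l)$. To arrange this, I take $g_k=h^k g h^{-k}$, or better independent random-walk elements, and use the following fact: since $a\neq e$ acts nontrivially on $\partial Z$ and the boundary action is minimal and has north--south dynamics, $a$ fixes no point of a suitably generic loxodromic pair. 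The key input is that $\bigcap_g E(g)$, which equals the set of elements fixing all loxodromic fixed points, is a finite normal subgroup and hence trivial. This yields, for each $a\in F$, loxodromics $g$ with $a g^{\pm\infty}\neq g^{\pm\infty}$. Conjugating by high powers of another independent loxodromic spreads the fixed pairs out, so that $a$ can match fixed points of $g_k$ with those of $g_l$ only for $O(1)$ values of $l$ per $k$.

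\textbf{Step 2.} Next I choose small disjoint closed neighbourhoods $V_k^\pm\ni g_k^{\pm\infty}$ in $Z\cup\partial Z$, taken as shadows. They must satisfy three requirements: all $2N$ of them are pairwise disjoint; for the good pairs, $aV_l^{\pm}$ is disjoint from $V_k^\pm$, which follows by continuity, since $F$ is finite and the boundary points are separated; and the neighbourhoods are small enough that, by north--south dynamics, a power $t_k=g_k^{p}$ satisfies $t_k\bigl((Z\cup\partial Z)\setminus V_k^-\bigr)\subseteq V_k^+$. I then set $B_{t_k}^\pm=\{\gamma\in\Gamma:\gamma o\in V_k^\pm\}$ and $E=\{t_1,\ldots,t_N\}$. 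Conditions (1)--(3) of Definition~\ref{def:fpp} transfer from $Z$ to $\Gamma$ because the orbit map is equivariant, so preimages of disjoint sets are disjoint and the inclusion in (3) pulls back. For condition (4), each bad pair $(s,t)$ comes from a coincidence in Step 1, and there are at most $O(|E|)$ small bad pairs per $a\in F$. Taking $N$ large relative to this count makes the bad proportion smaller than $\varepsilon$. More simply, it is enough to aim directly for zero bad pairs, as in \eqref{eq:fpp-zero-error}.

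\textbf{Main obstacle.} The main obstacle is Step 1: guaranteeing that no $a\in F$ maps a fixed point of $t_s$ onto a fixed point of $t_t$. When $s=t$ this forces $a\in E(g_t)$, so I need loxodromics avoiding the finitely many constraints at once. The plan is to find a single loxodromic $g$ with $F\cap E(g)=\varnothing$; this is where the trivial radical enters, via Hull--Osin or Dahmani--Guirardel--Osin, since $\bigcap_g E(g)$ is a finite normal subgroup. I then take $g_k=u^k g u^{-k}$ with $u$ independent of $g$. If $a g_l^{\pm\infty}=g_k^{\pm\infty}$, then $u^{-k}au^{l}\in$ the set of elements mapping fixed points of $g$ to fixed points of $g$, which is a finite union of cosets of $E(g)$. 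Acylindricity, through the WPD property of $u$ and $g$, bounds the number of such $(k,l)$ by a constant depending only on $a$. This gives condition (4) once $N$ is large.
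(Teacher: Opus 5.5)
There is a genuine gap in how you handle condition~(4) of Definition~\ref{def:fpp}. That condition allows fewer than $\varepsilon|E|$ bad pairs, which is linear in $|E|$ with arbitrarily small slope. So your Step~1/Step~2 estimate of ``$O(1)$ bad $l$ per $k$'', i.e.\ $O(|E|)$ bad pairs in total, proves nothing, and enlarging $N$ does not help. What you need is a total count bounded independently of $N$. Your last paragraph claims exactly this, but the claim is false when $u$ is merely independent of $g$.

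Here is a counterexample. Suppose some $a\in F$ fixes $u^{+}$, say $a=u^{c}$ with $c\neq0$; nothing in your construction prevents a power of $u$ from lying in $F$. Then $u^{-k}au^{k-c}=e\in E(g)$, so $a$ carries the fixed points of $g_{k-c}$ exactly onto those of $g_k$ for every admissible $k$. Since $\Gamma o$ accumulates at $g_k^{\pm}$, the set $aV_{k-c}^{\pm}\cap V_k^{\pm}$ always contains orbit points, however small the neighbourhoods are. Hence each pair $(t_k,t_{k-c})$ is bad, giving roughly $N$ bad pairs.

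The repair is to also choose $u$ with $au^{+}\neq u^{+}$ for all $a\in F$ (for instance $F\cap E(u)=\varnothing$). Then the count really is bounded, and by plain boundary dynamics rather than WPD:
\begin{itemize}
\item For fixed $k$, at most four values of $l$ satisfy $au^lg^{\eta}=u^kg^{\delta}$, because a nontrivial power of $u$ fixes only $u^{\pm}$. The same holds with the roles of $k$ and $l$ swapped.
\item For $k,l$ both large, the points $u^kg^{\pm}\to u^{+}$ and $au^lg^{\pm}\to au^{+}$ are separated.
\end{itemize}

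A second, smaller hole: triviality of $\bigcap_g E(g)$ only gives, for each $a\in F$ separately, some $g_a$ with $a\notin E(g_a)$. It does not give one $g$, or one $u$, that works for all of $F$ simultaneously. The natural simultaneous input is topological freeness of $\Gamma\curvearrowright\partial Z$, which is what the paper takes from \cite[Proposition~0.3]{abbott2019property}, and this is where $R(\Gamma)=\{e\}$ is used.

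Once you have topological freeness, the paper's route removes all counting:
\begin{itemize}
\item Pick $x_0\in\partial Z$ moved by every $a\in F$, and a neighbourhood $V$ of $x_0$ with $aV\cap V=\varnothing$ for all $a\in F$.
\item Take pairwise transverse loxodromics $h_0,h_1,\dots,h_N$ with $h_0^{+}\in V$. Conjugate $h_1,\dots,h_N$ by a high power of $h_0$ so that all their fixed points lie in $V$.
\item Choose the neighbourhoods $U_i^{\pm}$ inside $V$.
\end{itemize}
Then $a(U_i^{+}\cup U_i^{-})\subseteq aV$ misses every $U_j^{\pm}\subseteq V$, so there are no bad pairs at all. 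Pulling back along an orbit map gives $P_{\mathrm{FPP}}$ with zero error. The paper uses the orbit of a boundary point $y$; your interior basepoint $o$ would also work, using north--south dynamics on $Z\cup\partial Z$.
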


\begin{proof}
Choose a hyperbolic space $X$ on which $\Gamma$ acts non-elementarily,
acylindrically, and coboundedly, and let $L=\partial X$.  By
\cite[Proposition~0.3]{abbott2019property}, the action
$\Gamma\curvearrowright L$ is minimal and topologically free, hence
strongly faithful (i.e., for every finite set
$F\subseteq\Gamma\setminus\{e\}$, there is a point of $L$ fixed by no
element of $F$).  Moreover, \cite[Proposition~1.3]{abbott2019property}
provides arbitrarily large finite families of pairwise transverse
loxodromic elements, so the action is strongly hyperbolic.

Fix a finite set $F\subseteq\Gamma\setminus\{e\}$, an integer
$n\in\mathbb N$, and $\varepsilon>0$, and put $N=\max\{n,1\}$.  By
strong faithfulness, choose $x_0\in L$ such that
$ax_0\neq x_0$ for every $a\in F$.  Since $L$ is Hausdorff and $F$ is
finite, there is an open neighborhood $V$ of $x_0$ such that $aV\cap V=\varnothing$ for every $a\in F$. By strong hyperbolicity, choose pairwise transverse loxodromic elements
$ h_0,h_1,\ldots,h_N\in\Gamma$.
By minimality, after conjugating all of them by the same element, we may
assume that $h_0^+\in V$.  Since
$h_i^\pm\neq h_0^-$ for all $1\leq i\leq N$, north--south dynamics gives
$h_0^k h_i^\pm\in V$ for every $1\leq i\leq N$ and every sufficiently
large $k$.  Fix such a $k$ and let
$ r_i=h_0^k h_i h_0^{-k}$
 for $1\leq i\leq N$.
Thus, the $r_i$ are pairwise transverse and all their attracting and
repelling fixed points belong to $V$. Choose mutually disjoint open neighborhoods $ U_i^\pm\subseteq V$ such that
$r_i^\pm\in U_i^\pm$
 for each $1\leq i\leq N$. After replacing each $r_i$ by a sufficiently large positive power,
say $t_i=r_i^{m_i}$, north--south dynamics gives us that
$ t_i(L\setminus U_i^-)\subseteq U_i^+$ for every $1\leq i\leq N$.
The elements $t_1,\ldots,t_N$ are distinct, since their fixed-point
pairs are pairwise disjoint. Let 
 $E=\{t_1,\ldots,t_N\}$.
Since
$U_i^\pm\subseteq V$, we have
that
$ a(U_i^+\cup U_i^-)\cap(U_j^+\cup U_j^-)=\varnothing$
for every $a\in F$ and every $1\leq i,j\leq N$.

Fix $y\in L$ and let
 $B_{t_i}^\pm=\{s\in\Gamma:sy\in U_i^\pm\}$ for $1\leq i\leq N$. We claim that these are the desired sets.  Indeed, $|E|=N\geq n$, and the family
$\{B_{t_i}^+,B_{t_i}^-:1\leq i\leq N\}$ is mutually disjoint.  If
$s\notin B_{t_i}^-$, then $sy\notin U_i^-$, so north--south dynamics gives
 $(t_i s)y=t_i(sy)\in U_i^+$.
Hence
 $t_i(\Gamma\setminus B_{t_i}^-)\subseteq B_{t_i}^+$.
Finally, suppose that for some $a\in F$ and some $i,j$ one had
\begin{equation*}
 a(B_{t_i}^+\cup B_{t_i}^-)
 \cap(B_{t_j}^+\cup B_{t_j}^-)\neq\varnothing.
\end{equation*}
Choose $q$ in this intersection and write $q=as$, where
$s\in B_{t_i}^+\cup B_{t_i}^-$.  Then
\begin{equation*}
 qy=a(sy)\in
 a(U_i^+\cup U_i^-)\cap(U_j^+\cup U_j^-),
\end{equation*}
contrary to our construction.  Thus the exceptional
set in condition~\textup{(4)} of Definition~\ref{def:fpp} is empty for
every $a\in F$.  Since $\varepsilon>0$ and $|E|=N\geq1$, its cardinality
is $0<\varepsilon|E|$.  This proves that $\Gamma$ has property
$P_{\mathrm{FPP}}$.
\end{proof}

\begin{remark}
Together with Theorem~\ref{thm:fpp-girth},
Proposition~\ref{prop:ah-fpp} gives another proof of
Corollary~\ref{cor:ah-girth} when the finite radical is trivial.  
\end{remark}
\subsection{Flag dynamics over local fields}\label{subsec:vig-girth}
We use the flag-space argument of Vigdorovich
\cite[Sections~2--4]{vigdorovich2026}.  Let $S$ be finite and consider an
arbitrary product
$ G=\prod_{v\in S}\mathbf G_v(\mathbb K_v)$,
where $\mathbb K_v$ is a local field and $\mathbf G_v$ is semisimple.  The
$S$-Zariski topology on $G$ is the product of the Zariski topologies on the
sets $\mathbf G_v(\mathbb K_v)$; a subgroup is $S$-Zariski dense if it is
dense for this topology \cite[Section~2]{vigdorovich2026}.  Let
$G_c=\prod_{v\in S}\mathbf G_v^\circ(\mathbb K_v)$.  For an $S$-Zariski
dense subgroup $\Gamma<G$, let $\mathcal B$ be the associated compact flag
space from \cite[Subsection~4.1]{vigdorovich2026}.  For $x\in\mathcal B$, write
$Y_x$ for the closed set of flags not transverse to $x$.

\begin{lemma}\label{lem:vig-three-position}
Assume that $\Gamma<G$ is $S$-Zariski dense and that
$\Gamma\curvearrowright\mathcal B$ is faithful.  Let $F\subset\Gamma$ be a
finite set and put $\widetilde F=F\cup\{e\}$.  Let
$\gamma_0\in\Gamma_c:=\Gamma\cap G_c$ be $\Theta_\Gamma$-proximal, with
transverse attracting and repelling points $(x^+,x^-)$.  Then there are
$s_1,s_2,s_3\in\Gamma_c$ such that:
\begin{enumerate}[label=\textup{(\arabic*)}]
\item the points $a s_i x^\delta$, where $a\in\widetilde F$,
$1\leq i\leq3$ and $\delta\in\{+,-\}$, are pairwise distinct;
\item for $i\neq j$, $a\in\widetilde F$ and
$\delta,\eta\in\{+,-\}$,
 $a s_j x^\eta\notin Y_{s_i x^\delta}$.

\end{enumerate}
\end{lemma}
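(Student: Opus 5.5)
We choose $s_1,s_2,s_3$ one at a time. At each stage we avoid a finite union of proper Zariski closed conditions, and we use $S$-Zariski density of $\Gamma_c$ to pick a good element. First, $\Gamma_c$ has finite index in $\Gamma$, since $G_c$ has finite index in $G$. Hence $\Gamma_c$ is $S$-Zariski dense in $G_c$, which is $S$-Zariski connected. Consequently $\Gamma_c$ is not contained in any finite union of proper $S$-Zariski closed subsets of $G_c$. The key structural input from \cite[Sections~2--4]{vigdorovich2026} is that $\mathcal B$ is a product of flag varieties $G_v/P_v$. For each $y\in\mathcal B$, the non-transversality locus $Y_y$ is a proper Zariski closed subset (a union of Schubert-type divisors), and $Y_{gy}=gY_y$. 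Every condition in (1) and (2) will be rewritten in the form "$s$ lies outside a proper Zariski closed subset of $G_c$".

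Suppose $s_1,\dots,s_{k-1}$ have been chosen, and we want to choose $s=s_k$. The conditions involving $s$ are of four kinds.
\begin{enumerate}[label=\textup{(\roman*)}]
\item $a s x^\delta\neq b s x^\eta$ for $(a,\delta)\neq(b,\eta)$.
\item $a s x^\delta\neq b s_j x^\eta$ for $j<k$.
\item $a s x^\eta\notin Y_{s_i x^\delta}$ for $i<k$.
\item $a s_j x^\eta\notin Y_{s x^\delta}$ for $j<k$, which is equivalent to $s^{-1}a s_j x^\eta\notin Y_{x^\delta}$.
\end{enumerate}
Each of these sets of $s$ is Zariski closed, being defined by a closed condition on an algebraic orbit map $g\mapsto gy$. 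What remains is to check that each one is proper.

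Types (ii), (iii) and (iv) are proper because the orbit maps $g\mapsto g x^\delta$ are surjective onto a $G_c$-homogeneous component, and $Y_y$ is a proper subvariety of it. For type (iii) one can see this directly: the bad set is $\{s: s x^\eta\in a^{-1}Y_{s_i x^\delta}\}$, which is the preimage of a proper closed subset under a dominant map. Type (iv) is handled in the same way with $s^{-1}$ in place of $s$.

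\textbf{Main obstacle: type (i).} We must exclude $a s x^\delta = b s x^\eta$ identically in $s$.
\begin{itemize}
\item When $\delta\neq\eta$, the points $x^+$ and $x^-$ are transverse, hence distinct. If the relation held for all $s\in G_c$, then the element $b^{-1}a$ would carry $s x^+$ to $s x^-$ for every $s$, i.e.\ it would map every flag to a transverse one. This is impossible, since applying the relation to $s$ and to a conjugate suitably related to it gives a contradiction.
\item When $\delta=\eta$ and $a\neq b$, the relation holding for all $s$ would mean that $b^{-1}a$ fixes the whole orbit $G_c x^\delta$. This contradicts faithfulness of $\Gamma\curvearrowright\mathcal B$ together with density. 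More precisely, the kernel of the action of $G_c$ on that component is central and hence trivial, because the groups are adjoint. But $b^{-1}a\in\Gamma$ need not lie in $G_c$, so we invoke faithfulness of $\Gamma$ on $\mathcal B$ instead. Fixing the orbit of $x^\delta$, which is Zariski dense in the component, implies acting trivially on $\mathcal B$ when $\mathcal B$ is a single component. In general, we reduce to the components $\Theta_\Gamma$ recorded in $\mathcal B$.
\end{itemize}

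I expect this bookkeeping about components of $\mathcal B$, and about non-connected $\mathbf G_v$, to be the delicate point. I would first try to handle it by taking Vigdorovich's description of $\mathcal B$ as $G_c$-homogeneous under his standing hypotheses.

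Finally, finitely many proper closed sets are avoided by some $s\in\Gamma_c$, by irreducibility of $G_c$. Choosing $s_1$, then $s_2$, then $s_3$ in this way gives (1) and (2).
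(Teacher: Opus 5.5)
Your architecture matches the paper's. Every requirement is a proper $S$-Zariski closed condition, $G_c$ is irreducible, and $\Gamma_c$ is dense in it. Choosing $s_1,s_2,s_3$ one at a time in $G_c$, instead of simultaneously in $G_c^3$, is an inessential difference. Your types (ii)--(iv) are handled correctly by the same argument the paper uses: $G_c$ is transitive on $\mathcal B$ and $Y_y\subsetneq\mathcal B$. The real difference is type (i). The paper does not prove same-index distinctness itself; it imports it from \cite[Proposition~4.4]{vigdorovich2026}, whose collision case rests on self-normalization of parabolic point stabilizers. Your replacement for that citation has a gap, exactly at the step you call the main obstacle.

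For $\delta=\eta$ your argument works once you use transitivity. If $asx^\delta=bsx^\delta$ for all $s\in G_c$, then $b^{-1}a\in\Gamma\setminus\{e\}$ fixes $G_cx^\delta=\mathcal B$ pointwise, contradicting faithfulness. The discussion of components and adjointness is unnecessary, and adjointness is not a hypothesis of the lemma anyway.

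For $\delta\neq\eta$, however, nothing is actually proved. The consequence you extract, that $b^{-1}a$ sends every flag to a transverse one, is not contradictory. In $\mathrm{PGL}_2(\mathbb R)$ acting on $\mathbf P^1(\mathbb R)$, where transverse means distinct, rotation by $\pi/2$ does exactly this. The phrase ``a conjugate suitably related to it'' is not an argument.

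The missing idea is to exploit the $s$-equivariance of the relation through the stabilizer of $x^+$. With $c=b^{-1}a$ one has $csx^+=sx^-$ for all $s\in G_c$. Replacing $s$ by $sp$ with $p\in\Stab_{G_c}(x^+)$ gives $spx^-=csx^+=sx^-$, so $\Stab_{G_c}(x^+)\subseteq\Stab_{G_c}(x^-)$. This is impossible for transverse $x^+\neq x^-$. The unipotent radical of the parabolic $\Stab(x^+)$ acts simply transitively on the flags transverse to $x^+$, so it moves $x^-$. Alternatively, conjugate parabolics one of which contains the other coincide, and the self-normalization of parabolic point stabilizers behind the paper's citation then forces $x^+=x^-$. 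With this step inserted, your proof goes through.
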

\begin{proof}
Apply \cite[Proposition~4.4]{vigdorovich2026} with the finite set
$\widetilde F$, with $E=\{x^+,x^-\}$, and with $n=3$.  Its kernel
hypothesis holds since $\ker(G\curvearrowright\mathcal B)
 \cap\widetilde F^{-1}\widetilde F
 \subseteq
 \ker(\Gamma\curvearrowright\mathcal B)
 =\{e\}$.
Consequently, there is a nonempty $S$-Zariski-open subset
$\Omega_0\subseteq G_c^3$ such that, for every
$(g_1,g_2,g_3)\in\Omega_0$, the points
$\{ ag_i x^\delta,:~
 a\in\widetilde F,~ 1\leq i\leq3,~
 \delta\in\{+,-\}
\}$
are pairwise distinct.  The same-index collision case in
\cite[Proposition~4.4]{vigdorovich2026} uses the self-normalization of parabolic point stabilizers
through \cite[Lemma~2.3]{vigdorovich2026}. We now impose the additional transversality conditions.  Fix
$a\in\widetilde F$, distinct indices $i,j$, and
$\delta,\eta\in\{+,-\}$.  Define the set
\[
 Z_{a,i,j}^{\delta,\eta}
 :=
 \left\{
 (g_1,g_2,g_3)\in G_c^3:
 ag_jx^\eta\in Y_{g_ix^\delta}
 \right\}.
\]
The set
 $\mathcal I
 =
 \{(u,v)\in\mathcal B^2:v\in Y_u\}$
is $S$-Zariski closed by the Bruhat description of the
nontransversality loci
\cite[Section~3]{vigdorovich2026}.  Therefore, it follows that
$Z_{a,i,j}^{\delta,\eta}$ is $S$-Zariski closed. It is proper.  Indeed, set $g_i=e$ and fix the unused coordinate.  Since $G_c$
acts transitively on $\mathcal B$, the map $g_j\longmapsto ag_jx^\eta
$
is onto $\mathcal B$.  Since
$Y_{x^\delta}\subsetneq\mathcal B$, we can choose $g_j$ such that
$ag_jx^\eta\notin Y_{x^\delta}$.
Thus $Z_{a,i,j}^{\delta,\eta}\neq G_c^3$. There are only finitely many choices of
$(a,i,j,\delta,\eta)$.  As such, 
\[
 \Omega
 :=
 \Omega_0
 \cap
 \bigcap_{\substack{
       a\in\widetilde F,\ i\neq j\\
       \delta,\eta\in\{+,-\}}}
 \left(G_c^3\setminus Z_{a,i,j}^{\delta,\eta}\right)
\]
being a finite intersection of nonempty $S$-Zariski-open subsets of the
irreducible space $G_c^3$ is nonempty. Since
$\Gamma_c^3$ is Zariski dense in $G_c^3$,   we may therefore choose
$(s_1,s_2,s_3)\in\Gamma_c^3\cap\Omega$.
By construction, this triple satisfies both conclusions.
\end{proof}
\begin{theorem}\label{thm:vig-simple-girth}
Let $S$ be finite and, for each $v\in S$, let $\mathbb K_v$ be a local
field and $\mathbf G_v$ a connected adjoint $\mathbb K_v$-simple algebraic
group.  Let $G=\prod_{v\in S}\mathbf G_v(\mathbb K_v)$.  If $\Gamma<G$ is
finitely generated and its projection to every $\mathbf G_v(\mathbb K_v)$
is Zariski dense and unbounded, then $\Gir(\Gamma)=\infty$.
\end{theorem}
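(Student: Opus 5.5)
The plan is to verify Nakamura's criterion (Theorem~\ref{thm:nakamura}) for the action of $\Gamma$ on the flag space $\mathcal B$ of \cite[Subsection~4.1]{vigdorovich2026}.

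\textbf{Step 1: preliminary reductions.} Since each $\mathbf G_v$ is adjoint and $\mathbb K_v$-simple, the kernel of $G\curvearrowright\mathcal B$ is trivial. Hence $\Gamma\curvearrowright\mathcal B$ is faithful. The projections are Zariski dense, and each factor is simple. A Goursat-type argument on the Zariski closure of $\Gamma$ in $G$, viewed as a product of simple groups, then shows $\Gamma$ is $S$-Zariski dense. If this fails because of diagonal embeddings, I would instead project to a single factor. Unboundedness in every factor gives a $\Theta_\Gamma$-proximal element $\gamma_0\in\Gamma_c$, by the construction in \cite[Section~4]{vigdorovich2026}; here $\Gamma_c$ has finite index in $\Gamma$ and is still Zariski dense. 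Let $(x^+,x^-)$ be its transverse attracting and repelling points.

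\textbf{Step 2: configuration and neighbourhoods.} Fix a finite generating set $\{\gamma_1,\ldots,\gamma_\ell\}$ and let $F=\{\gamma_j^{\pm1}\}$. Apply Lemma~\ref{lem:vig-three-position} to $F$ to get $s_1,s_2,s_3\in\Gamma_c$, and set
\[
\sigma=s_1\gamma_0^{k}s_1^{-1},\qquad \tau=s_2\gamma_0^{k}s_2^{-1},\qquad x=s_3x^+ .
\]
Then $\sigma$ is proximal with attracting and repelling points $s_1x^\pm$, and $\tau$ with $s_2x^\pm$. Let $U_\sigma$ be a small closed neighbourhood of $\{s_1x^+,s_1x^-\}$, and $U_\tau$ one of $\{s_2x^+,s_2x^-\}$.

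Condition~\eqref{eq:nak-1} follows from part~(1) of the lemma: the point $s_3x^+$ differs from every $as_ix^\delta$ with $a\in F\cup\{e\}$ and $i=1,2$. Shrinking the neighbourhoods keeps $x$ outside $a(U_\sigma\cup U_\tau)$ for the finitely many $a$.

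\textbf{Step 3: the ping-pong inclusions.} For~\eqref{eq:nak-2} I need $\sigma^r$ to send
\[
K_\tau=\{x\}\cup U_\tau\cup\bigcup_j\gamma_j^{\pm1}U_\tau
\]
into $U_\sigma$ for every $r\neq0$. By proximal dynamics on $\mathcal B$, $\gamma_0^{k}$ for large $k$ maps every compact set of flags transverse to $x^-$ into any given neighbourhood of $x^+$; likewise $\gamma_0^{-k}$ maps compact sets transverse to $x^+$ near $x^-$. After conjugating by $s_1$, it suffices that $K_\tau$ avoids both $Y_{s_1x^+}$ and $Y_{s_1x^-}$. Part~(2) of the lemma gives exactly $as_jx^\eta\notin Y_{s_1x^\delta}$ for $j=2,3$. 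Since the $Y$'s are closed, small enough neighbourhoods $U_\tau$ (and of $x$) stay disjoint from them.

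One care point is uniformity in $r$: take a single large $k$ so that $\gamma_0^{\pm k}$ contracts the complement of a neighbourhood of $Y_{x^\mp}$ into a neighbourhood of $x^\pm$, and choose $U_\sigma$ inside the transverse region. Then the iterates $\sigma^{r}$, $r\ge1$, keep points near $s_1x^+$, and $r\le-1$ near $s_1x^-$. The symmetric argument gives~\eqref{eq:nak-3}, and Theorem~\ref{thm:nakamura} concludes.

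\textbf{Main obstacle.} I expect the hardest part to be the order of choosing the neighbourhoods together with the uniform contraction for all powers. Take first small neighbourhoods $U_\sigma,U_\tau$ (and of $x$) avoiding the relevant closed sets $Y$. Then choose $k$, using that the contraction of $\gamma_0^k$ is uniform on compact sets transverse to $x^-$. Finally check that $U_\sigma$ is itself mapped into itself by $\sigma^{\pm1}$ near the respective fixed point, so that powers iterate correctly. A secondary point is confirming $S$-Zariski density of $\Gamma$ from Zariski density of each projection.
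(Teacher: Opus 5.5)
Your proposal is correct and follows essentially the paper's route. The paper uses the same ingredients: Lemma~\ref{lem:vig-three-position} applied to the symmetric generating set, $\sigma,\tau$ the conjugates $s_i\gamma_0^{m}s_i^{-1}$ for $i=1,2$, and basepoint $x=s_3x^+$. It obtains uniformity in $r$ exactly as you describe, choosing neighbourhoods $U_i^\pm$ of the poles disjoint from neighbourhoods $V_i^\mp$ of $Y_{s_ix^\mp}$, so that $\gamma_i(\mathcal B\setminus V_i^-)\subseteq U_i^+\subseteq\mathcal B\setminus V_i^-$ can be iterated (and symmetrically for $\gamma_i^{-1}$). $S$-Zariski density and faithfulness are simply quoted from Vigdorovich's proof of his Theorem~1.3 rather than argued via Goursat.

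Three small precisions. Since each $\mathbf G_v$ is connected, $\Gamma_c=\Gamma$. You also need $\gamma_0^{-1}$ to be $\Theta_\Gamma$-proximal, which the paper takes from Vigdorovich's Lemma~4.2. Finally, it is $U_\sigma^+$ (resp.\ $U_\sigma^-$) that $\sigma$ (resp.\ $\sigma^{-1}$) maps into itself, not all of $U_\sigma$.
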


\begin{proof}
As in the proof of \cite[Theorem~1.3]{vigdorovich2026}, the hypotheses imply
that $\Gamma$ is $S$-Zariski dense and that the associated flag action is
faithful.
Fix a finite symmetric generating set $F\subset\Gamma\setminus\{e\}$ and
put $\widetilde F=F\cup\{e\}$.  By \cite[Lemma~4.2]{vigdorovich2026},
choose $\gamma_0\in\Gamma_c$ such that both $\gamma_0$ and
$\gamma_0^{-1}$ are $\Theta_\Gamma$-proximal, with transverse attracting
and repelling points $(x^+,x^-)$.  Using
Lemma~\ref{lem:vig-three-position}, we obtain $s_1,s_2,s_3\in\Gamma_c$ that
satisfy its conditions (1) and (2). Let $x_i^\pm=s_i x^\pm$ for each $i=1,2,3$.
Since $\mathcal B$ is compact
Hausdorff, hence normal, we can choose open neighborhoods $U_i^\pm$ of
$x_i^\pm$ and open neighborhoods
$V_i^\pm$ of $Y_{x_i^\pm}$ so that the following hold.
\begin{enumerate}[label=\textup{(\roman*)}]
\item the sets $aU_i^\pm$, for $a\in\widetilde F$ and $1\leq i\leq3$,
are pairwise disjoint;
\item $aU_j^\eta\cap V_i^\delta=\varnothing$ whenever $i\neq j$,
$a\in\widetilde F$ and $\delta,\eta\in\{+,-\}$;
\item $U_i^+\cap V_i^-=U_i^-\cap V_i^+=\varnothing$ for $1\leq i\leq3$.
\end{enumerate}
The required separations follow from Lemma~\ref{lem:vig-three-position}
and the transversality of $(x_i^+,x_i^-)$.

By the uniform convergence in \cite[Fact~4.1]{vigdorovich2026}, after
choosing $m_i$ sufficiently large and putting
$\gamma_i=s_i\gamma_0^{m_i}s_i^{-1}$, we obtain that
$\gamma_i(\mathcal B\setminus V_i^-)\subseteq U_i^+$, and
$\gamma_i^{-1}(\mathcal B\setminus V_i^+)\subseteq U_i^-$.
Let $U_i=U_i^+\cup U_i^-$ and $V_i=V_i^+\cup V_i^-$.  By condition
\textup{(iii)}, $U_i^+\subseteq\mathcal B\setminus V_i^-$ and
$U_i^-\subseteq\mathcal B\setminus V_i^+$. Therefore, it follows that
$\gamma_i^r(\mathcal B\setminus V_i)\subseteq U_i$ for all
$r\in\mathbb Z\setminus\{0\}$. We now apply Nakamura's criterion to the
action on the space $\mathcal B$.  Let
$\sigma=\gamma_1$, $\tau=\gamma_2$ and choose $x=x_3^+\in U_3^+$.  By
\textup{(i)}, it follows that
 $x\notin(U_1\cup U_2)\cup\bigcup_{a\in F}a(U_1\cup U_2)$.
By \textup{(ii)}, we obtain that
$ \{x\}\cup U_2\cup\bigcup_{a\in F}aU_2
 \subseteq\mathcal B\setminus V_1$,
and hence, for all $r\in\mathbb{Z}\setminus\{0\}$,
\[\sigma^r\left(\{x\}\cup U_2\cup\bigcup_{a\in F}aU_2\right)=\gamma_1^r\left(\{x\}\cup U_2\cup\bigcup_{a\in F}aU_2\right)\subseteq\gamma_1^r(\mathcal B\setminus V_1)\subset U_1.\]
This ensures that Nakamura's second condition is satisfied.
A symmetric argument shows that for all $r\in\mathbb{Z}\setminus\{0\}$,
\[\tau^r\left(\{x\}\cup U_1\cup\bigcup_{a\in F}aU_1\right)=\gamma_2^r\left(\{x\}\cup U_1\cup\bigcup_{a\in F}aU_1\right)\subseteq\gamma_2^r(\mathcal B\setminus V_2)\subset U_2.\]
Thus, Nakamura's third and final condition is satisfied.  It follows from
Theorem~\ref{thm:nakamura} that $\Gir(\Gamma)=\infty$.
\end{proof}
The preceding theorem admits the following strengthening, pointed out to us
by Azer Akhmedov.  In particular, one may remove the unboundedness
assumption and require Zariski density for only one coordinate projection. We thank him for this observation and its proof.
\begin{theorem}
\label{thm:akhmedov-one-projection}
Let $S$ be finite and, for each $v\in S$, let $\mathbb K_v$ be a local
field and $\mathbf G_v$ a connected adjoint $\mathbb K_v$-simple algebraic
group.  Let
\[
G=\prod_{v\in S}\mathbf G_v(\mathbb K_v).
\]
If $\Gamma<G$ is finitely generated and its projection to
$\mathbf G_{v_0}(\mathbb K_{v_0})$ is Zariski dense for some $v_0\in S$,
then $\Gir(\Gamma)=\infty$.
\end{theorem}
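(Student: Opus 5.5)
The idea is to reduce to Theorem~\ref{thm:vig-simple-girth} applied to a single factor, using Akhmedov's quotient principle (Proposition~\ref{prop:quotient}). The projection $\pi_{v_0}\colon\Gamma\to\mathbf G_{v_0}(\mathbb K_{v_0})$ is a surjection of $\Gamma$ onto $Q:=\pi_{v_0}(\Gamma)$. Since $\Gamma$ is finitely generated, so is $Q$, and $Q$ is Zariski dense in $\mathbf G_{v_0}(\mathbb K_{v_0})$ by hypothesis. It therefore suffices to show that $Q$ is noncyclic and $\Gir(Q)=\infty$. Noncyclicity is clear: a Zariski dense subgroup of a connected noncommutative simple group cannot be abelian, since its Zariski closure would then be abelian.

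Next consider the single-factor case $S'=\{v_0\}$. If $Q$ is unbounded in $\mathbf G_{v_0}(\mathbb K_{v_0})$, then Theorem~\ref{thm:vig-simple-girth}, applied with $S'$, gives $\Gir(Q)=\infty$ directly, and Proposition~\ref{prop:quotient} finishes the proof.

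The main obstacle is the case where $Q$ is bounded, i.e.\ relatively compact. Here I would avoid Vigdorovich's flag dynamics and use linearity instead. Since $\mathbf G_{v_0}$ is adjoint, the adjoint representation realizes $Q$ as a finitely generated linear group. Because $Q$ is Zariski dense in a noncommutative simple group, it is not virtually solvable. By Tits' alternative, in the strong form of Breuillard--Gelander, we can find a field embedding or completion in which $Q$ acts proximally and is unbounded. Concretely, $Q$ lies in $\mathbf G_{v_0}(k)$ for a finitely generated field $k$. Since $Q$ is not virtually solvable, some entry of some element of $Q$ is not an algebraic integer, or generates an unbounded subgroup, at some place. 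So there is a local field $\mathbb L$ and a field embedding $k\hookrightarrow\mathbb L$ with image of $Q$ unbounded in $\mathbf G_{v_0}(\mathbb L)$. If every such embedding gave a bounded image, then by the standard argument (Tits) $Q$ would be virtually solvable, or its traces would be algebraic integers bounded at all places and hence finite in number, which would make $Q$ finite. Both options contradict Zariski density in a positive-dimensional simple group.

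After this base change, the image $Q'$ of $Q$ in $\mathbf G'(\mathbb L)$ is Zariski dense and unbounded, where $\mathbf G'$ is a $\mathbb L$-simple factor obtained after base change and adjoint quotient. This quotient step may again require Proposition~\ref{prop:quotient} together with noncyclicity. Theorem~\ref{thm:vig-simple-girth} then gives $\Gir(Q')=\infty$, and the quotient principle lifts this back to $Q$ and then to $\Gamma$. The delicate points are choosing the new local field so that unboundedness and Zariski density both hold in one simple factor, and dealing with $\mathbf G_{v_0}$ possibly splitting after base change. I would handle the splitting by projecting to a factor where the image is unbounded. That projection is still Zariski dense because the projection of a Zariski dense subgroup is dense.
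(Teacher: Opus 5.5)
Your argument is correct in outline, but it takes a genuinely different route from the paper.

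\textbf{The paper's proof.} The paper makes no bounded/unbounded case distinction and does not use Theorem~\ref{thm:vig-simple-girth} at all. It first shows that $Q=\pi_{v_0}(\Gamma)$ is not virtually solvable: if $A\leq Q$ were solvable of finite index, finitely many translates of its solvable Zariski closure would cover the irreducible group $\mathbf G_{v_0}$. It then cites Akhmedov's theorem \cite[Theorem~4.4]{Akhmedov2005} that a finitely generated non-virtually-solvable linear group has infinite girth, and concludes with Proposition~\ref{prop:quotient}.

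\textbf{Your proof.} You instead derive the strengthening from the paper's own flag-dynamics result. The unbounded case is Theorem~\ref{thm:vig-simple-girth} with a single place. In the bounded case you move $Q$ to a new local field $\mathbb L$ where it becomes unbounded, project to an unbounded $\mathbb L$-simple factor (Zariski density survives the projection), apply Theorem~\ref{thm:vig-simple-girth} there, and lift back with the quotient principle.

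\textbf{Comparison.} Your route shows that the strengthening needs no girth theorem for linear groups beyond Theorem~\ref{thm:vig-simple-girth}. It costs considerably more than the paper's two-line argument, because you need:
\begin{enumerate}
\item the $k$-form $\mathbf H$ of $\mathbf G_{v_0}$, given by the Zariski closure of $Q\subset\mathrm{GL}_n(k)$ (your phrase ``$Q$ lies in $\mathbf G_{v_0}(k)$'' presupposes it);
\item the facts that $Q$ remains Zariski dense in $\mathbf H\otimes_k\mathbb L$, and that this adjoint group is a product of connected adjoint $\mathbb L$-simple factors;
\item Tits's embedding lemma.
\end{enumerate}

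\textbf{A step to repair.} Your ``concrete'' justification for the existence of an unbounded embedding should be replaced.
\begin{itemize}
\item A non-integral entry of a single element does not make $Q$ unbounded anywhere; a finite group can have such entries.
\item The finitely-many-traces argument only handles number fields, not fields with transcendental elements.
\end{itemize}
The clean input is that some $\gamma\in Q$ has an eigenvalue $\lambda$ that is not a root of unity. Suppose otherwise. All eigenvalues lie in extensions of $k$ of degree at most $n$, so they would be roots of unity of bounded order $m$. The Zariski-closed condition that the characteristic polynomial divides $(T^m-1)^n$ would then pass from $Q$ to all of $\mathbf G_{v_0}$, which is impossible because $\mathbf G_{v_0}$ contains a nontrivial torus. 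Given such $\lambda$, Tits's lemma yields an embedding of $k(\lambda)$ into a local field $\mathbb L$ with $|\lambda|>1$. Then $\langle\gamma\rangle$, and hence $Q$, is unbounded over $\mathbb L$.
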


\begin{proof}
Let
$Q=\pi_{v_0}(\Gamma)
  \leq \mathbf G_{v_0}(\mathbb K_{v_0})$.
Then $Q$ is a finitely generated linear group.  We claim that $Q$ is not
virtually solvable.  Suppose otherwise, and let $A\leq Q$ be a solvable
subgroup of finite index.  Write
$Q=q_1A\cup\cdots\cup q_mA$
and let $\mathbf H$ be the Zariski closure of $A$ in
$\mathbf G_{v_0}$.  The algebraic group $\mathbf H$ is solvable.  Since
$Q$ is Zariski dense, we obtain
\[
\mathbf G_{v_0}
 =q_1\mathbf H\cup\cdots\cup q_m\mathbf H.
\]
A connected algebraic group is irreducible and cannot be a finite union of
proper Zariski-closed subsets.  Consequently
$\mathbf H=\mathbf G_{v_0}$, contradicting the fact that
$\mathbf G_{v_0}$ is a non-solvable simple algebraic group.  Thus $Q$ is
not virtually solvable. It follows from
\cite[Theorem~4.4]{Akhmedov2005} gives $\Gir(Q)=\infty$.  In particular,
$Q$ is noncyclic.  Since
$\pi_{v_0}\colon\Gamma\twoheadrightarrow Q$ is surjective,
Proposition~\ref{prop:quotient} gives $\Gir(\Gamma)=\infty$.
\end{proof}

\begin{remark}\label{rem:theorem-c-nonlinear}
The groups covered by Theorem~\ref{thm:vig-simple-girth}, and even by its
strengthening above, need not themselves be linear over any field.  
\end{remark}

\section{Extreme boundaries and asymptotically injective retractions}
\label{sec:extreme}

Throughout this section, $\Gamma$ is a countable group acting by
homeomorphisms on a compact Hausdorff space $M$.
The results below were proved by Ozawa \cite{Ozawa2026}.  We include the details for completeness.

\begin{definition}\label{def:extreme-action}
The action $\Gamma\curvearrowright M$ is
\begin{enumerate}[label=\textup{(\arabic*)}]
\item \emph{minimal} if every orbit is dense;
\item \emph{extremely proximal} if, for every two nonempty open subsets
$U,V\subseteq M$, there is $g\in\Gamma$ such that
$g(M\setminus U)\subseteq V$;
\item an \emph{extreme boundary action} if it is minimal and extremely
proximal;
\item \emph{topologically free} if $\Fix(g)$ has empty interior for every
$g\in\Gamma\setminus\{e\}$.
\end{enumerate}
\end{definition}

For a countable group acting on a compact Hausdorff space, topological
freeness is equivalent to saying that the points with trivial stabilizer are dense.
\begin{lemma}\label{lem:perfect-extreme}
If $\Gamma\curvearrowright M$ is extremely proximal and $|M|>2$, then $M$
has no isolated points.
\end{lemma}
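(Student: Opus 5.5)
The argument is by contradiction: suppose $p\in M$ is isolated, so $U=\{p\}$ is a nonempty open set. The plan is to use extreme proximality to compress almost all of $M$ into a single point, and then compare the sizes of the two sides.

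First, since $|M|>2$, the set $V=M\setminus\{p\}$ contains at least two points. Compact Hausdorff spaces are $T_1$, so $\{p\}$ is closed and $V$ is a nonempty open set. Because $V$ has at least two points and $M$ is Hausdorff, I can pick two distinct points $y_1,y_2\in V$ together with disjoint open neighbourhoods $W_1,W_2$, and shrink them so that $W_1,W_2\subseteq V$.

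Next I apply extreme proximality to the pair of open sets $U=\{p\}$ and $V'=\{p\}$. This gives $g\in\Gamma$ with $g(M\setminus\{p\})\subseteq\{p\}$. Since $M\setminus\{p\}$ is nonempty, this says $g$ maps the entire set $M\setminus\{p\}$, which has at least two points, onto the single point $p$.

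Finally, $g$ is a homeomorphism and in particular injective. An injective map cannot send two distinct points to the same point, so this is a contradiction, and $M$ has no isolated points. The only place where care is needed is confirming that $\{p\}$ is open, which is exactly the isolation hypothesis, and that $M\setminus\{p\}$ has at least two points, which is where $|M|>2$ is used. The neighbourhoods $W_1,W_2$ are not actually needed; the cardinality count alone suffices.
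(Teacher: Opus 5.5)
Your proof is correct and uses the same idea as the paper: the isolated point makes $\{p\}$ an open target set, extreme proximality then pushes a set with at least two points into $\{p\}$, and this contradicts injectivity of $g$. The only difference is the choice of the first open set. The paper takes $M\setminus\{a,b\}$, which requires points to be closed, whereas your choice $U=\{p\}$ needs no separation axiom. The neighbourhoods $W_1,W_2$ and the remark that $M\setminus\{p\}$ is open play no role and can be deleted.
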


\begin{proof}
Suppose $p\in M$ were isolated.  Then $V=\{p\}$ is open.  Choose distinct
points $a,b\in M\setminus\{p\}$ and put $U=M\setminus\{a,b\}$.  The set $U$
is nonempty and open.  Extreme proximality gives $g\in\Gamma$ with
$g\{a,b\}=g(M\setminus U)\subseteq\{p\}$, contradicting injectivity of
the homeomorphism $g$.
\end{proof}

\begin{definition}[Axial net; cf. \cite{Ozawa2026}]
\label{def:axial-net}
A net $(\gamma_\nu)_{\nu\in\mathcal D}$ in $\Gamma$ is \emph{axial} if
there are distinct points $z_+,z_-\in M$ such that the following conditions
hold.
\begin{enumerate}[label=\textup{(\arabic*)}]
\item For every pair of neighborhoods $U_+$ of $z_+$ and $U_-$ of $z_-$,
eventually
\begin{equation}\tag{$\ast3$}\label{eq:axial-dynamics}
 \gamma_\nu(M\setminus U_-)\subseteq U_+,
 \qquad
 \gamma_\nu^{-1}(M\setminus U_+)\subseteq U_-.
\end{equation}
\item The action of $\Gamma$ on the pair $\{z_+,z_-\}$ is free: if
$g\{z_+,z_-\}\cap\{z_+,z_-\}\neq\varnothing$, then $g=e$.
\end{enumerate}
The points $z_+$ and $z_-$ are called the poles of the net.  For
$\epsilon\in\{\pm1\}$, we write $z_\epsilon$ for the corresponding pole and
$\bar\epsilon=-\epsilon$.
\end{definition}

\begin{proposition}[Ozawa]\label{prop:axial-net-exists}
Suppose $\Gamma\curvearrowright M$ is topologically free and extremely
proximal, and $|M|>2$.  Then $\Gamma$ admits an axial net.  If $M$ is first
countable, the net may be chosen to be a sequence.
\end{proposition}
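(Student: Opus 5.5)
To produce an axial net, I will choose the two poles first and then build the elements $\gamma_\nu$ by extreme proximality. The index set is a basis of neighborhoods of the poles.

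First I choose the poles $z_+,z_-$ so that condition (2) of Definition~\ref{def:axial-net} holds. For countable $\Gamma$, topological freeness makes the set of points with trivial stabilizer dense. By Lemma~\ref{lem:perfect-extreme}, $M$ has no isolated points. Pick $z_+$ with trivial stabilizer. Next I need $z_-$ with trivial stabilizer and $z_-\notin\Gamma z_+$. The free points form a dense $G_\delta$-type set; to avoid Baire issues in the nonmetrizable case, I will argue as follows. For each $g\neq e$, $\Fix(g)$ is closed with empty interior. Also, each singleton $\{gz_+\}$ is closed and nowhere dense, since there are no isolated points. So the set to avoid is a countable union of closed nowhere dense sets. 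A compact Hausdorff space is Baire, so some point $z_-$ avoids all of them. Then $z_-$ is free and lies off $\Gamma z_+$, and freeness of the pair follows. Indeed, $gz_\pm=z_\pm$ forces $g=e$, and $gz_+=z_-$ or $gz_-=z_+$ is excluded.

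Second, I build the net. Index by pairs $\nu=(U_+,U_-)$ of open neighborhoods of $z_+,z_-$, ordered by reverse inclusion. Shrink them so that $\overline{U_+}\cap\overline{U_-}=\varnothing$; this only needs a cofinal set. For each $\nu$, I want $\gamma$ with $\gamma(M\setminus U_-)\subseteq U_+$ and $\gamma^{-1}(M\setminus U_+)\subseteq U_-$. The second inclusion is equivalent to the first: if $\gamma(M\setminus U_-)\subseteq U_+$, then taking complements gives $\gamma^{-1}(M\setminus U_+)\subseteq U_-$. So it suffices to find $\gamma$ with $\gamma(M\setminus U_-)\subseteq U_+$. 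Extreme proximality, applied with $U=U_-$ and $V=U_+$, gives this directly. Monotonicity then yields eventual validity of $(\ast3)$ for any fixed pair of neighborhoods. Given $U_\pm$, every later index $\nu'=(U'_+,U'_-)$ has $U'_\pm\subseteq U_\pm$. Hence $\gamma_{\nu'}(M\setminus U_-)\subseteq\gamma_{\nu'}(M\setminus U'_-)\subseteq U'_+\subseteq U_+$, and likewise for the inverse.

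If $M$ is first countable, take countable decreasing neighborhood bases $U_\pm^{(k)}$ at $z_\pm$ and set $\gamma_k=\gamma_{(U_+^{(k)},U_-^{(k)})}$. This gives a sequence, and the same monotonicity argument applies.

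I expect the main obstacle to be the choice of poles satisfying condition (2). Topological freeness only gives density of free points, so I must avoid the orbit of $z_+$. This is where the Baire category theorem and the absence of isolated points (Lemma~\ref{lem:perfect-extreme}, using $|M|>2$) are essential. The dynamics step is immediate from extreme proximality.
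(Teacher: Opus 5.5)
Your proof is correct and takes essentially the same route as the paper. You pick a free point $z_+$, use the Baire property of the compact Hausdorff space $M$ (with Lemma~\ref{lem:perfect-extreme} making orbit points nowhere dense) to choose a free $z_-\notin\Gamma z_+$, then index by pairs of neighborhoods under reverse inclusion, take $\gamma_\nu$ from extreme proximality, and get the inverse inclusion by complements.

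The differences are cosmetic. You list the countable family of closed nowhere dense sets explicitly, where the paper speaks of a dense $G_\delta$ minus a meager orbit. Your restriction to neighborhoods with disjoint closures is harmless but unnecessary.
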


\begin{proof}
By Lemma~\ref{lem:perfect-extreme}, every singleton is closed with empty
interior.  Since $\Gamma$ is countable, every orbit is meager.  The set
$M_{\mathrm{free}}$ defined above is a dense $G_\delta$.

Choose $z_+\in M_{\mathrm{free}}$.  The set
$M_{\mathrm{free}}\setminus\Gamma z_+$ is nonempty by the Baire theorem, so
choose $z_-\in M_{\mathrm{free}}\setminus\Gamma z_+$.  We verify the
freeness condition for the pair.  If $gz_+=z_+$ or $gz_-=z_-$, then $g=e$
because both points have a trivial stabilizer.  The equalities $gz_+=z_-$ and
$gz_-=z_+$ are impossible because they would put $z_-$ in the orbit of
$z_+$.

Let $\mathcal D$ be the set of pairs $\nu=(U_+,U_-)$, where $U_\pm$ is an
open neighborhood of $z_\pm$.  Direct $\mathcal D$ by reverse inclusion in
both coordinates, i.e., $(V_+,V_-)\geq(U_+,U_-)$ when
$V_+\subseteq U_+$ and $V_-\subseteq U_-$.  For each $\nu=(U_+,U_-)$,
extreme proximality supplies $\gamma_\nu\in\Gamma$ such that
$\gamma_\nu(M\setminus U_-)\subseteq U_+$.  Since $\gamma_\nu$ is a
bijection, taking complements gives
$\gamma_\nu^{-1}(M\setminus U_+)\subseteq U_-$.  If $V_\pm$ are fixed
neighborhoods of $z_\pm$, then \eqref{eq:axial-dynamics} holds for every
$\nu=(U_+,U_-)\geq(V_+,V_-)$.  Thus $(\gamma_\nu)$ is axial.  If $M$ is
first countable, choose decreasing countable neighborhood bases at $z_+$ and
$z_-$ and run the same construction along them.
\end{proof}
Let
$\widetilde\Gamma=\Gamma*\langle z\rangle$ and let
$\varphi_\nu=\varphi_{\gamma_\nu}$.
Every element of $\widetilde\Gamma\setminus\Gamma$ can be written in the
form
\begin{equation}\tag{$\ast4$}\label{eq:ozawa-spelling}
 w=a_0z^{\epsilon_1}a_1z^{\epsilon_2}\cdots
 a_{r-1}z^{\epsilon_r}a_r,
\end{equation}
where $r\geq1$, $a_i\in\Gamma$, and
$\epsilon_i\in\{\pm1\}$.  We require the no-cancellation condition
\begin{equation}\tag{$\ast5$}\label{eq:no-cancellation}
 a_i=e\quad\Longrightarrow\quad
 \epsilon_i=\epsilon_{i+1}\qquad(1\leq i\leq r-1).
\end{equation}
Such a spelling is obtained by writing the usual free-product normal form
and splitting every nonzero power of $z$ into letters $z^{\pm1}$.
Conversely, \eqref{eq:no-cancellation} guarantees that
\eqref{eq:ozawa-spelling} represents an element outside $\Gamma$.

\begin{lemma}[Ozawa]
\label{lem:ozawa-convergence}
Let $(\gamma_\nu)$ be an axial net with poles $z_\pm$, and let $w$ be
spelled as in \eqref{eq:ozawa-spelling}--\eqref{eq:no-cancellation}.  If
$x\neq a_r^{-1}z_{\bar\epsilon_r}$, then
$\varphi_\nu(w)x\to a_0z_{\epsilon_1}$.
\end{lemma}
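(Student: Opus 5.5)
We argue by induction on $r$, reading $w$ from right to left. The relevant evaluation is
\[
\varphi_\nu(w)=a_0\gamma_\nu^{\epsilon_1}a_1\gamma_\nu^{\epsilon_2}\cdots a_{r-1}\gamma_\nu^{\epsilon_r}a_r .
\]
Set $x_r=a_rx$. By hypothesis, $x_r\neq z_{\bar\epsilon_r}$.

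For the innermost letter, fix a neighborhood $U$ of $z_{\epsilon_r}$. Choose disjoint neighborhoods $U_{\bar\epsilon_r}$ of $z_{\bar\epsilon_r}$ and of $x_r$; this uses the Hausdorff property and $x_r\neq z_{\bar\epsilon_r}$. Take $U_{\epsilon_r}=U$. The axial dynamics \eqref{eq:axial-dynamics}, applied to $\gamma_\nu$ or to $\gamma_\nu^{-1}$ according to the sign of $\epsilon_r$, then give eventually $\gamma_\nu^{\epsilon_r}x_r\in U$. Hence $\gamma_\nu^{\epsilon_r}x_r\to z_{\epsilon_r}$. Applying the homeomorphism $a_{r-1}$, we get $y_\nu:=a_{r-1}\gamma_\nu^{\epsilon_r}a_rx\to a_{r-1}z_{\epsilon_r}$.

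The inductive step has to handle a moving point rather than a fixed one, so we prove a slightly stronger statement. \emph{Claim:} if $(y_\nu)$ is a net converging to a point $y\neq z_{\bar\epsilon}$, then $\gamma_\nu^{\epsilon}y_\nu\to z_\epsilon$. The proof is the same as before. Pick a neighborhood $U_{\bar\epsilon}$ of $z_{\bar\epsilon}$ whose closure misses a neighborhood $W$ of $y$. Eventually $y_\nu\in W\subseteq M\setminus U_{\bar\epsilon}$, and eventually \eqref{eq:axial-dynamics} holds with the prescribed target neighborhood $U_\epsilon$. Since $\mathcal D$ is directed, both events hold simultaneously eventually. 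The base case above is the special case of a constant net.

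With the claim in hand, we iterate from the inside out. To pass through the next letter $\gamma_\nu^{\epsilon_{i}}$, whose input net converges to $a_iz_{\epsilon_{i+1}}$, we must check that $a_iz_{\epsilon_{i+1}}\neq z_{\bar\epsilon_{i}}$. This is the key point, and it combines the freeness in Definition~\ref{def:axial-net}(2) with \eqref{eq:no-cancellation}. If $a_i\neq e$, then freeness of $\Gamma$ on $\{z_+,z_-\}$ forbids $a_iz_{\epsilon_{i+1}}\in\{z_+,z_-\}$. If $a_i=e$, then $\epsilon_i=\epsilon_{i+1}$, so $z_{\epsilon_{i+1}}=z_{\epsilon_i}\neq z_{\bar\epsilon_i}$ because the poles are distinct. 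Applying the claim then gives $\gamma_\nu^{\epsilon_i}(\cdots)\to z_{\epsilon_i}$. After the last letter, we apply $a_0$ and obtain $\varphi_\nu(w)x\to a_0z_{\epsilon_1}$.

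I expect the only delicate point to be the claim about moving points. One must choose $U_{\bar\epsilon}$ with closure disjoint from a neighborhood of the limit $y$, which is possible since compact Hausdorff spaces are regular. One must also combine the two ``eventually'' statements using directedness of the net. Everything else is bookkeeping of the signs $\epsilon_i$.
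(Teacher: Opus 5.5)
Your proof is correct and is essentially the paper's argument. The paper also inducts by splitting off the outermost letter, uses the same freeness and no-cancellation check to get $a_1z_{\epsilon_2}\neq z_{\bar\epsilon_1}$, and treats the moving point $\varphi_\nu(v)x$ exactly as in your claim, via disjoint neighborhoods and directedness. The one superfluous step is your appeal to regularity: disjoint open neighborhoods from the Hausdorff property already suffice, and that is what the paper uses.
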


\begin{proof}
We argue by induction on $r$.

Suppose first that $r=1$, so $w=a_0z^{\epsilon_1}a_1$.  The hypothesis says
that $a_1x\neq z_{\bar\epsilon_1}$.  Let $U$ be a neighborhood of
$a_0z_{\epsilon_1}$.  Then $a_0^{-1}U$ is a neighborhood of
$z_{\epsilon_1}$.  Choose a neighborhood $V$ of
$z_{\bar\epsilon_1}$ which does not contain $a_1x$.  It follows from the Axial condition that
$\gamma_\nu^{\epsilon_1}(M\setminus V)\subseteq a_0^{-1}U$ eventually.
Since $a_1x\notin V$, it follows that
$a_0\gamma_\nu^{\epsilon_1}a_1x\in U$ eventually.  This proves the base
case.

Assume $r\geq2$ and put
$v=a_1z^{\epsilon_2}a_2\cdots z^{\epsilon_r}a_r$.  The tail has length
$r-1$ and satisfies the same no-cancellation condition.  By the induction
hypothesis, $\varphi_\nu(v)x\to y:=a_1z_{\epsilon_2}$.

We claim that $y\neq z_{\bar\epsilon_1}$.  If $a_1\neq e$, an equality
$a_1z_{\epsilon_2}=z_{\bar\epsilon_1}$ would contradict the freeness of the
action on $\{z_+,z_-\}$.  If $a_1=e$, condition
\eqref{eq:no-cancellation} gives $\epsilon_1=\epsilon_2$, and hence
$y=z_{\epsilon_1}\neq z_{\bar\epsilon_1}$.

Let $U$ be a neighborhood of $a_0z_{\epsilon_1}$.  Since $M$ is Hausdorff,
choose disjoint neighborhoods $V$ of $z_{\bar\epsilon_1}$ and $V'$ of
$y$.  The induction hypothesis gives
$\varphi_\nu(v)x\in V'\subseteq M\setminus V$ eventually, while axiality
gives $\gamma_\nu^{\epsilon_1}(M\setminus V)\subseteq a_0^{-1}U$
eventually.  Since the index set is directed, both statements hold
simultaneously.  Therefore
$\varphi_\nu(w)x=a_0\gamma_\nu^{\epsilon_1}(\varphi_\nu(v)x)\in U$
eventually, as required.
\end{proof}

\begin{theorem}\label{thm:axial-asymptotic}
Let $\Gamma\curvearrowright M$ be a countable group action on a compact
Hausdorff space with $|M|>2$, and suppose that the action admits an axial
net.  Then the associated retractions
$\varphi_\nu\colon\Gamma*\langle z\rangle\to\Gamma$ are asymptotically
injective.
\end{theorem}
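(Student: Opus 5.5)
Fix $w\in\widetilde\Gamma\setminus\{e\}$; we must show $\varphi_\nu(w)\neq e$ eventually. There are two cases. If $w\in\Gamma$, then $\varphi_\nu(w)=w\neq e$ for every $\nu$ and there is nothing to prove. So assume $w\notin\Gamma$ and spell it as in \eqref{eq:ozawa-spelling} with the no-cancellation condition \eqref{eq:no-cancellation}. The strategy is to apply Lemma~\ref{lem:ozawa-convergence} at two different points $x$. This forces $\varphi_\nu(w)$ to move both points into a small neighbourhood of the single point $a_0z_{\epsilon_1}$. The identity cannot do that once the neighbourhood is small enough to exclude one of the two points.

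By Lemma~\ref{lem:ozawa-convergence}, $\varphi_\nu(w)x\to a_0z_{\epsilon_1}$ for every $x\neq a_r^{-1}z_{\bar\epsilon_r}$. Since $|M|>2$, we can choose two distinct points $x_1,x_2\in M$, both different from $a_r^{-1}z_{\bar\epsilon_r}$. At least one of them, say $x_1$, differs from $p:=a_0z_{\epsilon_1}$. Since $M$ is Hausdorff, pick a neighbourhood $U$ of $p$ with $x_1\notin U$. The lemma then gives $\varphi_\nu(w)x_1\in U$ eventually, so $\varphi_\nu(w)x_1\neq x_1$ and hence $\varphi_\nu(w)\neq e$ eventually.

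This argument uses only one point $x_1$ and only requires $|M|\geq 3$ to find $x_1\notin\{a_r^{-1}z_{\bar\epsilon_r},p\}$. Two excluded points plus one more gives exactly three, matching the hypothesis $|M|>2$. The main work, namely the convergence under the free-product word, is already contained in Lemma~\ref{lem:ozawa-convergence}. The only care needed here is to choose $x_1$ avoiding both the exceptional repelling point and the limit point, and to note that Hausdorffness lets us separate $x_1$ from $p$. Combining the two cases shows that for every nontrivial $w$ one has $\varphi_\nu(w)\neq e$ eventually, which is the definition of asymptotic injectivity in Definition~\ref{def:discrimination}.
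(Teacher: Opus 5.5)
Your proof is correct and is essentially the paper's argument: you dispose of $w\in\Gamma$ trivially, then pick $x\notin\{a_r^{-1}z_{\bar\epsilon_r},a_0z_{\epsilon_1}\}$ (possible since $|M|>2$), and use Lemma~\ref{lem:ozawa-convergence} together with Hausdorffness to get $\varphi_\nu(w)x\neq x$ eventually. The detour through two points $x_1,x_2$ is harmless but unnecessary, as you note yourself.
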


\begin{proof}
Let $w\in\widetilde\Gamma\setminus\{e\}$.  If $w\in\Gamma$, then
$\varphi_\nu(w)=w\neq e$ for every $\nu$.  Suppose $w\notin\Gamma$ and
choose a spelling \eqref{eq:ozawa-spelling}.  Since $|M|>2$, choose
$x\in M\setminus\{a_r^{-1}z_{\bar\epsilon_r},a_0z_{\epsilon_1}\}$.
By Lemma~\ref{lem:ozawa-convergence},
$\varphi_\nu(w)x\to a_0z_{\epsilon_1}\neq x$.  Choose disjoint
neighborhoods of $x$ and $a_0z_{\epsilon_1}$.  Eventually
$\varphi_\nu(w)x$ lies in the second neighborhood, and therefore differs
from $x$.  Thus $\varphi_\nu(w)\neq e$ eventually.
\end{proof}

\begin{theorem}\label{thm:extreme-girth}
Let $\Gamma$ be a finitely generated group admitting a topologically free
extreme boundary action on a compact Hausdorff space with more than two
points.  Then $\Gir(\Gamma)=\infty$.
\end{theorem}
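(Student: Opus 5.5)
The plan is to chain together the results already established in this section with Theorem~\ref{thm:discrimination-girth}. Let $\Gamma$ be finitely generated and let $\Gamma\curvearrowright M$ be a topologically free extreme boundary action on a compact Hausdorff space with $|M|>2$.

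First, $\Gamma$ is countable because it is finitely generated, so the standing hypotheses of this section apply. An extreme boundary action is in particular extremely proximal. Together with topological freeness and $|M|>2$, Proposition~\ref{prop:axial-net-exists} produces an axial net $(\gamma_\nu)_{\nu\in\mathcal D}$ with poles $z_\pm$. Minimality is not needed for this step. It only enters through the definition of extreme boundary action, so it is harmless.

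Next, Theorem~\ref{thm:axial-asymptotic} shows that the associated retractions $\varphi_\nu=\varphi_{\gamma_\nu}\colon\Gamma*\langle z\rangle\to\Gamma$ form an asymptotically injective net. Lemma~\ref{lem:asymptotic-implies-finite} then gives that $\Gamma$ has finite retraction discrimination. Finally, Theorem~\ref{thm:discrimination-girth} yields $\Gir(\Gamma)=\infty$.

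I do not expect any real obstacle, since the substantive work (Baire category for choosing the poles, Ozawa's convergence lemma) is already done. The one point to check is that Proposition~\ref{prop:axial-net-exists} uses the dense $G_\delta$ set $M_{\mathrm{free}}$ of points with trivial stabilizer. This set exists because of the equivalence, noted just before Lemma~\ref{lem:perfect-extreme}, between topological freeness and density of free points for countable groups. Each $\Fix(g)$ is closed with empty interior, so $M_{\mathrm{free}}$ is a countable intersection of dense open sets, and the Baire theorem applies because $M$ is compact Hausdorff.

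It is also worth noting that the proof of Theorem~\ref{thm:discrimination-girth} yields explicit generating sets. Fix a generating set $T$ and $m$, and choose $\nu$ large enough that $\varphi_\nu$ is nontrivial on the finite set $E(T,m)$. The generating set $\{\gamma_\nu,\gamma_\nu^{-M_i}t_i\gamma_\nu^{N_i}\}$ then has girth greater than $m$.
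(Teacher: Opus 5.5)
Your proposal is correct and matches the paper's own proof step for step. It passes from countability to an axial net via Proposition~\ref{prop:axial-net-exists}, then to finite retraction discrimination via Theorem~\ref{thm:axial-asymptotic} and Lemma~\ref{lem:asymptotic-implies-finite}, and concludes with Theorem~\ref{thm:discrimination-girth}; your side remarks (that minimality is not used, the Baire argument giving the dense $G_\delta$ set $M_{\mathrm{free}}$, and the explicit generating sets) are also accurate.
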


\begin{proof}
A finitely generated group is countable.  By
Proposition~\ref{prop:axial-net-exists}, the action supplies an axial net.
Theorem~\ref{thm:axial-asymptotic} and
Lemma~\ref{lem:asymptotic-implies-finite} gives finite retraction
discrimination.  Theorem~\ref{thm:discrimination-girth} now gives infinite
girth.
\end{proof}
\begin{remark}\label{rem:law-obstruction} 
Suppose a countable group $\Gamma$ acts topologically freely and extremely
proximally on a compact Hausdorff space $M$ with $|M|>2$. It is easy to see that such an
action is strongly proximal in the sense that for every $\mu\in\text{Prob}(M)$, there is
$x\in M$ such that
$\delta_x\in\overline{\Gamma\mu}^{\,w^*}$. The action is therefore a topologically free boundary action, so $\Gamma$
is $C^*$-simple (see ~\cite{MR3652252},~\cite{MR3735864}). The converse fails in the sense that not every $C^*$-simple group admits an extremely proximal boundary action which is topologically free.  For every $m\geq2$ and every sufficiently
large odd $n$, the reduced $C^*$-algebra of the free Burnside group $B(m,n)$
is simple (see~\cite[Theorem~1.2]{MR3267529}).  Thus $B(m,n)$ is $C^*$-simple and
acts topologically freely, minimally, and strongly proximally on its
Every finite generating set of
$B(m,n)$ contains an element $s\neq e$, and $s^n$ is a nonempty cyclically
reduced word of length $n$ representing the identity.  Hence
$\Gir(B(m,n))\leq n$.
\end{remark}
It is well-known that $\overline\Gamma:=\Gamma/R(\Gamma)$ is acylindrically hyperbolic
and has trivial finite radical (see~\cite[Lemma~2.9]{HullOsin2016}). 
\begin{corollary}\label{cor:ah-girth}
Every finitely generated acylindrically hyperbolic group has infinite girth.
\end{corollary}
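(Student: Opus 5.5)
The plan is to reduce to the case of trivial finite radical and then invoke Theorem~\ref{thm:extreme-girth}, finishing with Akhmedov's quotient principle (Proposition~\ref{prop:quotient}). Let $\Gamma$ be finitely generated and acylindrically hyperbolic, and put $\overline\Gamma=\Gamma/R(\Gamma)$. By \cite[Lemma~2.9]{HullOsin2016}, as recalled just before the statement, $\overline\Gamma$ is acylindrically hyperbolic with trivial finite radical. It is also finitely generated, being a quotient of $\Gamma$.

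Next I produce the boundary action needed by Theorem~\ref{thm:extreme-girth}. By Yang \cite[Theorem~1.3]{Yang2026}, $\overline\Gamma$ admits an extreme boundary action on a compact metrizable space $M$, and this action is topologically free because the finite radical of $\overline\Gamma$ is trivial. I still have to check that $|M|>2$. An extreme boundary of a non-elementary group cannot be finite: a minimal action on a finite set is a single finite orbit. If $|M|\leq2$, a finite-index subgroup of $\overline\Gamma$ would fix a point of $M$. Topological freeness on a finite discrete space forces every nontrivial element to fix no point of $M$ (a fixed point would be an open fixed set). So that finite-index subgroup would be trivial, and $\overline\Gamma$ would be finite, which contradicts non-elementarity. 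Alternatively, Yang's extreme boundary is infinite, since it is built from the Gromov boundary of a non-elementary action, so I expect this step to be immediate from the cited statement. Theorem~\ref{thm:extreme-girth} then gives $\Gir(\overline\Gamma)=\infty$.

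Finally, I show $\overline\Gamma$ is noncyclic: acylindrically hyperbolic groups are not virtually cyclic by definition of non-elementarity. Hence Proposition~\ref{prop:quotient}, applied to the surjection $\Gamma\twoheadrightarrow\overline\Gamma$, yields $\Gir(\Gamma)=\infty$. The only delicate point is matching the exact hypotheses of Yang's theorem: countability is automatic, topological freeness is exactly what is needed, and the cardinality condition is handled as above. I do not expect any real obstacle beyond this bookkeeping. As a backup, Proposition~\ref{prop:ah-fpp} together with Theorem~\ref{thm:fpp-girth} gives the same conclusion for $\overline\Gamma$ without using Yang's theorem.
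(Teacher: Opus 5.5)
Your proposal is correct and follows the paper's proof essentially step by step. You pass to $\Gamma/R(\Gamma)$, invoke Yang's theorem, rule out $|M|\leq 2$ using the kernel of the action on a finite set together with topological freeness, apply Theorem~\ref{thm:extreme-girth}, and conclude with Proposition~\ref{prop:quotient} (your backup via Proposition~\ref{prop:ah-fpp} and Theorem~\ref{thm:fpp-girth} is likewise noted in the paper).
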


\begin{proof}
Let $\Gamma$ be finitely generated and acylindrically hyperbolic, and let
$\overline\Gamma=\Gamma/R(\Gamma)$. The quotient is finitely generated, acylindrically hyperbolic, and has no
nontrivial finite normal subgroup.  In particular, $\overline\Gamma$ is not
virtually cyclic and hence is infinite. Using~\cite[Theorem 1.3]{Yang2026}, we see that it admits
a topologically free extreme boundary action on a nonempty compact
metrizable space $M$.  The space $M$ has more than two points.  Indeed, if
$|M|\leq2$, the
kernel of the action on the finite set $M$ would be a finite-index subgroup
fixing every point.  Topological freeness would force this kernel to be
trivial, making $\overline\Gamma$ finite, a contradiction.  Hence
Theorem~\ref{thm:extreme-girth} gives $\Gir(\overline\Gamma)=\infty$, and
$\overline\Gamma$ is noncyclic.  The quotient map
$\Gamma\twoheadrightarrow\overline\Gamma$ and
Proposition~\ref{prop:quotient} therefore imply
$\Gir(\Gamma)=\infty$.
\end{proof}

\section{Roller boundaries, finite general position and infinite girth}\label{sec:cubical-prelim}
The preceding section treats infinite girth through extreme-boundary
dynamics.  The cubical setting is genuinely different.  A group acting
essentially and non-elementarily on a CAT(0) cube complex need not be
acylindrically hyperbolic (case in point being that of $F_2\times F_2$ acting properly,
essentially, and non-elementarily on the product of its two Cayley trees).   The relevant dynamics are also different.  Instead of extreme proximality,
we use the combinatorics of halfspaces and the dynamics of regular points in
the Roller boundary.
\subsection{CAT(0) cube complexes and halfspaces}\label{subsec:cube-basics}

We recall the cubical material used in the proof of
Theorem~\ref{thm:intro-cubical}, and refer the readers to
\cite{Sageev1995,Roller2016,CapraceSageev2011,ChatterjiFernosIozzi2016,FernosLecureuxMatheus2018} for more details.
A cube complex is obtained by gluing Euclidean unit cubes along faces by
isometries. It is called a CAT(0) cube complex when its induced Euclidean length
metric is CAT(0). We write $X^0$ for its vertex set and use the combinatorial
metric on $X^0$, namely the path metric in the one-skeleton.

A midcube of a Euclidean cube is obtained by setting one coordinate equal to
$1/2$. A hyperplane $\hh$ is a connected union of midcubes that meets each
cube in either one midcube or the empty set and such that the intersection of two
midcubes in $\hh$ is either empty or a common face of both. Every hyperplane
separates a CAT(0) cube
complex into two components \cite{Sageev1995}. The corresponding halfspaces
will always be regarded as subsets of $X^0$. Thus, if $\h$ and $\h^*$ are the
two halfspaces bounded by $\hh$, then $\h\cap\h^*=\varnothing$ and
$\h\cup\h^*=X^0$. Both halfspaces are nonempty; in particular, whenever we
choose a vertex in a halfspace below, such a vertex exists. Two hyperplanes
are transverse if they meet. Equivalently, if $\h,\h^*$ and
$\kk,\kk^*$ are their halfspaces, all four intersections
$\h^{(*)}\cap\kk^{(*)}$ are nonempty. Two hyperplanes are strongly separated
if they are disjoint, and no hyperplane is transverse to both. Halfspaces are
strongly separated when their bounding hyperplanes are strongly separated.

Throughout Section~\ref{sec:cubical-prelim}, $X$
is a finite-dimensional, second countable CAT(0) cube complex, and all actions
are by cubical automorphisms. Second countability is equivalent to
countability of $\HH(X)$; it also implies that $X^0$ is countable.  This is
the standing convention of
\cite{FernosLecureuxMatheus2018}, whose results we quote below.  By
\cite[Remark~3.1]{FernosLecureuxMatheus2018}, it is not a real restriction since a
countable group acting on a finite-dimensional CAT(0) cube complex preserves a
second countable subcomplex.  We assume neither local finiteness nor
properness.

We shall repeatedly use the following elementary observation.

\begin{lemma}
\label{lem:intermediate}
Let $\h_1\subseteq\h_2\subseteq\h_3$ be halfspaces. If a hyperplane $\hm$ is
transverse to both $\widehat{\h_1}$ and $\widehat{\h_3}$, then it is
transverse to $\widehat{\h_2}$.
\end{lemma}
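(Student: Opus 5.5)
We argue by contradiction, using the halfspace characterization of transversality recalled above: two hyperplanes are transverse exactly when all four intersections of their halfspaces are nonempty. Write $\m,\m^*$ for the halfspaces bounded by $\hm$. By hypothesis each of the eight sets $\h_1^{(*)}\cap\m^{(*)}$ and $\h_3^{(*)}\cap\m^{(*)}$ is nonempty. We must show that the four sets $\h_2\cap\m$, $\h_2\cap\m^*$, $\h_2^*\cap\m$ and $\h_2^*\cap\m^*$ are also nonempty.

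The key step is to use the nesting $\h_1\subseteq\h_2\subseteq\h_3$, which gives $\h_3^*\subseteq\h_2^*\subseteq\h_1^*$ after taking complements in $X^0$. For the intersections with $\h_2$, we note $\h_2\cap\m\supseteq\h_1\cap\m\neq\varnothing$, and likewise $\h_2\cap\m^*\supseteq\h_1\cap\m^*\neq\varnothing$; both inclusions use $\h_1\subseteq\h_2$. For the intersections with $\h_2^*$, we use $\h_3^*\subseteq\h_2^*$ to get $\h_2^*\cap\m\supseteq\h_3^*\cap\m\neq\varnothing$ and $\h_2^*\cap\m^*\supseteq\h_3^*\cap\m^*\neq\varnothing$. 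All four intersections are therefore nonempty, so $\hm$ is transverse to $\widehat{\h_2}$. In fact the argument is direct, and no contradiction is ultimately needed.

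There is essentially no obstacle. The only point requiring care is that the cited characterization (all four quarter-intersections nonempty) is a genuine equivalence for hyperplanes in a CAT(0) cube complex, and that halfspaces are taken as subsets of $X^0$, so complements behave as set complements. One should also note that $\hm\neq\widehat{\h_2}$ follows automatically, since a hyperplane has empty intersection between one of its halfspaces and the complementary one.
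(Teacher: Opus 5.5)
Your proof is correct and is the same direct argument the paper gives. The inclusion $\h_1\subseteq\h_2$ yields the two quadrants $\m\cap\h_2$ and $\m^*\cap\h_2$, and the inclusion $\h_3^*\subseteq\h_2^*$ yields $\m\cap\h_2^*$ and $\m^*\cap\h_2^*$; as you note yourself, the opening ``by contradiction'' framing is unnecessary.
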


\begin{proof}
Let $\m,\m^*$ be the two halfspaces bounded by $\hm$.  Since $\hm$ is
transverse to $\widehat{\h_1}$, all four quadrants determined by $\m$ and
$\h_1$ are nonempty.  In particular, both $\m\cap\h_1$ and $\m^*\cap\h_1$ are
nonempty.  Since $\h_1\subseteq\h_2$, it follows that both $\m\cap\h_2$ and
$\m^*\cap\h_2$ are nonempty.  Similarly, transversality of $\hm$ and
$\widehat{\h_3}$ gives nonempty sets $\m\cap\h_3^*$ and $\m^*\cap\h_3^*$.
Since $\h_3^*\subseteq\h_2^*$, both $\m\cap\h_2^*$ and $\m^*\cap\h_2^*$ are
nonempty.  Thus all four quadrants determined by $\m$ and $\h_2$ are
nonempty, which is equivalent to transversality of $\hm$ and
$\widehat{\h_2}$.
\end{proof}

\begin{corollary}\label{cor:pairwise-strong}
If $\h_1\supsetneq\h_2\supsetneq\cdots$ is a descending chain and every consecutive pair of hyperplanes is strongly separated, then every two
distinct hyperplanes in the chain are strongly separated.
\end{corollary}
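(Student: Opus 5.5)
The claim is that in a descending chain $\h_1\supsetneq\h_2\supsetneq\cdots$ with consecutive bounding hyperplanes strongly separated, any two hyperplanes $\widehat{\h_i}$ and $\widehat{\h_k}$ with $i<k$ are strongly separated. We argue directly from Lemma~\ref{lem:intermediate}. Fix such $i<k$. If $k=i+1$ there is nothing to prove, so assume $k\geq i+2$. Two things must be checked: that $\widehat{\h_i}$ and $\widehat{\h_k}$ are disjoint, and that no hyperplane is transverse to both.

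\emph{No common transverse hyperplane.} Suppose a hyperplane $\hm$ is transverse to both $\widehat{\h_i}$ and $\widehat{\h_k}$. Since $\h_k\subseteq\h_{i+1}\subseteq\h_i$, Lemma~\ref{lem:intermediate} (applied with $\h_1,\h_2,\h_3$ there equal to $\h_k,\h_{i+1},\h_i$) shows that $\hm$ is transverse to $\widehat{\h_{i+1}}$. But then $\hm$ is transverse to both $\widehat{\h_i}$ and $\widehat{\h_{i+1}}$, which contradicts strong separation of this consecutive pair.

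\emph{Disjointness.} We show that nestedness forces the two hyperplanes not to cross. The hyperplanes are distinct, since $\h_i\supsetneq\h_k$ and a hyperplane bounds only the two complementary halfspaces. If they were transverse, all four quadrants $\h_i^{(*)}\cap\h_k^{(*)}$ would be nonempty. However, $\h_k\subseteq\h_i$ gives $\h_k\cap\h_i^*=\varnothing$, which is a contradiction. Hence $\widehat{\h_i}$ and $\widehat{\h_k}$ are disjoint.

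Combining the two parts, $\widehat{\h_i}$ and $\widehat{\h_k}$ are strongly separated. No step is a real obstacle. The only point needing care is to identify hyperplane disjointness with non-transversality, which is the paper's definition of transverse ("meet"), and to use the stated equivalence with all four quadrants being nonempty.
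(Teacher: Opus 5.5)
Your proof is correct and follows the paper's own argument: you apply Lemma~\ref{lem:intermediate} to $\h_k\subseteq\h_{i+1}\subseteq\h_i$ to move a common transverse hyperplane onto the consecutive pair $\widehat{\h_i},\widehat{\h_{i+1}}$, and you get disjointness from the proper nesting. The one difference is that you spell out the empty quadrant $\h_k\cap\h_i^*=\varnothing$ and the distinctness of the two hyperplanes, which the paper leaves implicit.
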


\begin{proof}
Suppose $i<j$ and a hyperplane $\hm$ is transverse to both $\hh_i$ and
$\hh_j$. If $j=i+1$, this contradicts the hypothesis. If $j>i+1$,
Lemma~\ref{lem:intermediate}, applied to
$\h_j\subseteq\h_{i+1}\subseteq\h_i$, shows that $\hm$ is transverse to
$\hh_{i+1}$. It is then transverse to both members of the consecutive pair
$\hh_i,\hh_{i+1}$, again a contradiction. The hyperplanes $\hh_i$ and $\hh_j$ are disjoint because their halfspaces are properly nested. Hence, they are
strongly separated.
\end{proof}

By \cite[Proposition~2.6]{CapraceSageev2011}, an essential
finite-dimensional CAT(0) cube complex admits a canonical decomposition
$X=X_1\times\cdots\times X_p$ into irreducible factors, unique up to
permutation. Every cubical automorphism permutes the factors. If a group $G$
acts on $X$ by cubical automorphisms, we denote by $G^0$ the kernel of the
induced permutation action on the factors. Thus, $G^0$ is a finite-index
normal subgroup. Moreover, by the canonical product decomposition, every
element of $G^0$ acts factorwise, hence diagonally, on
$X_1\times\cdots\times X_p$; see \cite[Proposition~2.6]{CapraceSageev2011}.
The action also induces actions on the disjoint unions
$\bigsqcup_k\bdreg X_k$ and $\bigsqcup_k\bdry X_k$; an automorphism carrying
$X_k$ to $X_r$ carries the corresponding boundary of $X_k$ to that of $X_r$.

The visual boundary $\bdry X$ is the set of asymptotic classes of CAT(0)
geodesic rays. The action $G\curvearrowright X$ is \emph{essential} if, for one
and hence every vertex $v$, both $\sup_{g\in G}d(gv,\h^*)$ and
$\sup_{g\in G}d(gv,\h)$ are infinite for every halfspace $\h$. Thus, every
orbit enters arbitrarily
deeply into both sides of every hyperplane. The action is
\emph{non-elementary} if it has no finite orbit in $X\cup\bdry X$. The
complex $X$ is \emph{essential} if its full cubical automorphism group acts
essentially. These are the conventions of
\cite{CapraceSageev2011,FernosLecureuxMatheus2018}. We call $X$
\emph{non-Euclidean} if no irreducible factor is quasi-isometric to a line.
Note that if some $G\leq\Aut(X)$ acts essentially, then $\Aut(X)$ does too,
so $X$ is essential; we therefore omit that adjective from the hypotheses of
the statements below in which a group is assumed to act essentially.

\subsection{The Roller compactification}\label{subsec:roller}

Let $\HH(X)$ denote the set of halfspaces of $X$. An ultrafilter is a subset
$\alpha\subseteq\HH(X)$ satisfying the following two conditions.
\begin{enumerate}[label=\textup{(U\arabic*)}]
\item For every hyperplane, exactly one of its two halfspaces belongs to
$\alpha$.
\item If $\h\in\alpha$ and $\h\subseteq\kk$, then $\kk\in\alpha$.
\end{enumerate}
The set of all ultrafilters, with the topology inherited from
$\{0,1\}^{\HH(X)}$, is the Roller compactification $\Roll$. It is compact and
Hausdorff. A vertex $v$ determines the principal ultrafilter
$\alpha_v=\{\h:v\in\h\}$. Principal ultrafilters are exactly the ultrafilters
satisfying the descending chain condition.  The resulting map
$e\colon X^0\to\Roll$, $v\mapsto\alpha_v$, is injective, and its image is
dense because finitely many pairwise intersecting halfspaces have a common
vertex.  We identify $X^0$ with the set of principal ultrafilters. The Roller
boundary is $\partial X=\Roll\setminus X^0$.
The vertex map $e\colon X^0\to\{0,1\}^{\HH(X)}$ is the construction in
\cite[Section~3.1, Definition~3.2]{FernosLecureuxMatheus2018}.  When $X$ is
not locally finite, the subspace topology on $e(X^0)$ need not agree with the
discrete topology on the vertex set.  Thus, we regard $\Roll$ as the closure
of the image $e(X^0)$.  Compare
\cite[Remark~6.19 and Proposition~6.20]{FernosLecureuxMatheus2018}, which describes the corresponding nonproper-space caveat through the
horocompactification.  Every statement below concerns the compact $G$-space
$\Roll$ and the points $e(v)$, and we suppress $e$ from the notation.

For a halfspace $\h$, put
$\mathcal U_\h=\{\alpha\in\Roll:\h\in\alpha\}$.  The set
$\mathcal U_\h$ is clopen, and the sets obtained by taking finite
intersections of such sets form a basis. If $\h\subseteq\kk$, then
$\mathcal U_\h\subseteq\mathcal U_\kk$. Two halfspaces belonging to one
ultrafilter always meet. Indeed, if $\h,\kk\in\alpha$ and
$\h\cap\kk=\varnothing$, then $\h\subseteq\kk^*$, so upward consistency gives
$\kk^*\in\alpha$, contrary to the choice condition.

The following nesting lemma is a slight reformulation of Lemma~5.11 in
\cite{FernosLecureuxMatheus2018}.

\begin{lemma}\label{lem:nesting}
Let $\h_1\supsetneq\h_2\supsetneq\cdots$ be an infinite descending chain of
halfspaces, and let $\kk$ be a halfspace such that $\kk\cap\h_i\neq\varnothing$
for every $i$. Then exactly one of the following alternatives occurs after
discarding finitely many terms.
\begin{enumerate}[label=\textup{(\arabic*)}]
\item The hyperplane $\hk$ is transverse to every $\hh_i$.
\item One has $\h_i\subseteq\kk$ for every $i$.
\end{enumerate}
If consecutive hyperplanes in the chain are strongly separated, then
alternative~\textup{(2)} occurs.
\end{lemma}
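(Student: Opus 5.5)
For each $i$, the halfspaces $\kk$ and $\h_i$ meet, so the relative position of $\hk$ and $\hh_i$ leaves only a few options. The plan is to show that once one of the nested alternatives occurs along the chain, it persists, and then to use finite dimensionality to exclude endless switching.

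First list the relations between $\hk$ and $\hh_i$. Since $\kk\cap\h_i\neq\varnothing$, either $\hk$ is transverse to $\hh_i$, or one of $\h_i\subseteq\kk$, $\kk\subseteq\h_i$, $\kk^*\subseteq\h_i$ holds. The option $\kk\cap\h_i=\varnothing$ is ruled out by hypothesis.

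Next comes monotonicity along the chain. If $\h_i\subseteq\kk$ for some $i$, then $\h_j\subseteq\h_i\subseteq\kk$ for every $j\geq i$, so alternative (2) holds after discarding finitely many terms. If instead $\kk\subseteq\h_i$ or $\kk^*\subseteq\h_i$ for infinitely many $i$, we use that these sets are nonempty. A fixed nonempty set contained in infinitely many properly nested halfspaces forces infinitely many hyperplanes to separate a fixed vertex from another fixed vertex. Concretely, pick $v\in\kk^{(*)}$ and $w\in\h_1^*$. Then each such $\hh_i$ separates $v$ from $w$, and these $\hh_i$ are distinct hyperplanes. This contradicts the finiteness of $d(v,w)$. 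So, after discarding finitely many terms, each $\hk$ is either transverse to $\hh_i$ or satisfies $\h_i\subseteq\kk$.

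By the monotonicity step, if (2) never occurs then transversality holds for all large $i$, which is (1). The two alternatives are mutually exclusive, because $\h_i\subseteq\kk$ means $\hk$ and $\hh_i$ are not transverse. For the last assertion, suppose consecutive hyperplanes are strongly separated and (1) held. Then $\hk$ would be transverse to both $\hh_i$ and $\hh_{i+1}$, which contradicts strong separation. So (2) must occur.

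The main obstacle is the finiteness step that excludes $\kk^{(*)}\subseteq\h_i$ for infinitely many $i$. It needs the observation that a descending chain of halfspaces all containing a fixed vertex $v$ consists of hyperplanes separating $v$ from a point outside $\h_1$. I will handle it through the combinatorial distance, as above; finite dimensionality is not even needed there.
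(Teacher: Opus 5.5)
Your proof is correct and follows essentially the same route as the paper. You list the non-transverse positions allowed by $\kk\cap\h_i\neq\varnothing$, exclude $\kk\subseteq\h_i$ and $\kk^*\subseteq\h_i$ for infinitely many $i$, use that $\h_i\subseteq\kk$ persists down the chain, and rule out (1) under strong separation; the only difference is cosmetic, since you exclude the bad nestings by the finiteness of the set of hyperplanes separating two vertices, while the paper phrases this as the descending chain condition for principal ultrafilters.
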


\begin{proof}
The equality $\hk=\hh_i$ can occur for at most one index, so discard that
index if necessary. Suppose that $\hk$ and $\hh_i$ are not transverse. Since
$\kk\cap\h_i\neq\varnothing$, the possible nesting relations are
$\kk\subseteq\h_i$, $\kk^*\subseteq\h_i$, and $\h_i\subseteq\kk$.
The first relation can hold for only finitely many $i$. Indeed, if it held
for infinitely many indices, then every vertex of $\kk$ would belong to every
$\h_j$: for fixed $j$, choose $i\geq j$ with $\kk\subseteq\h_i\subseteq\h_j$.
The principal ultrafilter of such a vertex would contain the infinite
strictly descending chain $(\h_j)$, contrary to the descending chain
condition. The same argument, using a vertex of $\kk^*$, excludes the second
relation for infinitely many $i$.
It follows that, for all sufficiently large $i$, either $\hk$ is transverse
to $\hh_i$ or $\h_i\subseteq\kk$. If $\h_N\subseteq\kk$ for one sufficiently
large $N$, then $\h_i\subseteq\h_N\subseteq\kk$ for every $i\geq N$, giving
alternative~(2). If this never occurs, alternative~(1) holds. Finally, when
consecutive hyperplanes are strongly separated, alternative~(1) is impossible
because $\hk$ would be transverse to two consecutive hyperplanes.
\end{proof}

\begin{lemma}\label{lem:singleton-chain}
Let $\h_1\supsetneq\h_2\supsetneq\cdots$ be a descending chain whose
consecutive hyperplanes are strongly separated. Then
$\bigcap_{i\geq1}\mathcal U_{\h_i}=\{\xi\}$ for a unique point
$\xi\in\partial X$.
\end{lemma}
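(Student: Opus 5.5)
The plan has three steps: show the intersection is nonempty, show its points are not principal ultrafilters, and show it contains at most one point.

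\emph{Nonemptiness.} Each $\mathcal U_{\h_i}$ is clopen, hence closed, in the compact space $\Roll$. The sets are nested, $\mathcal U_{\h_{i+1}}\subseteq\mathcal U_{\h_i}$, and each is nonempty, because any vertex $v\in\h_i$ gives $\alpha_v\in\mathcal U_{\h_i}$. By compactness the intersection is therefore nonempty.

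\emph{Every point of the intersection lies in $\partial X$.} Any $\xi$ in the intersection contains the strictly descending chain $(\h_i)$. Hence $\xi$ fails the descending chain condition and is not principal, so $\xi\in\partial X$.

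\emph{Uniqueness.} Suppose $\xi\neq\eta$ both lie in the intersection. Then some hyperplane separates them: there is a halfspace $\kk$ with $\kk\in\xi$ and $\kk^*\in\eta$. Two halfspaces belonging to one ultrafilter meet, so $\kk\cap\h_i\neq\varnothing$ for every $i$, using $\xi$, and $\kk^*\cap\h_i\neq\varnothing$ for every $i$, using $\eta$. Since consecutive hyperplanes are strongly separated, Lemma~\ref{lem:nesting} applies to both $\kk$ and $\kk^*$ with alternative (2). So after discarding finitely many terms, $\h_i\subseteq\kk$ and $\h_i\subseteq\kk^*$ for large $i$. This forces $\h_i\subseteq\kk\cap\kk^*=\varnothing$, which contradicts the nonemptiness of halfspaces. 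Hence the intersection is a single point $\xi\in\partial X$.

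Once Lemma~\ref{lem:nesting} is available, the only point needing care is applying it to $\kk^*$ as well as to $\kk$. The hypothesis of that lemma, nonempty intersection with every $\h_i$, holds for each of them because of the respective ultrafilter $\xi$ or $\eta$. I do not expect any other obstacle: nonemptiness is standard compactness, and boundary membership is immediate from the characterization of principal ultrafilters recalled above.
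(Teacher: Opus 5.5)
Your proposal is correct and follows the paper's proof: compactness gives nonemptiness, the descending chain condition rules out principal ultrafilters, and Lemma~\ref{lem:nesting} gives uniqueness. The only cosmetic difference is the final contradiction: you apply the nesting lemma to both $\kk$ and $\kk^*$ to get $\h_i\subseteq\kk\cap\kk^*=\varnothing$, whereas the paper applies it once and derives the contradiction from upward closure of the second ultrafilter.
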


\begin{proof}
The sets $\mathcal U_{\h_i}$ are nonempty compact sets and form a descending
family, so their intersection is nonempty. A principal ultrafilter cannot
belong to the intersection, because it would contain the infinite strictly
descending chain $(\h_i)$. Thus, the intersection lies in the Roller boundary.

Suppose that $\alpha$ and $\beta$ both belong to the intersection and
$\alpha\neq\beta$. Choose a halfspace $\m\in\alpha$ with $\m^*\in\beta$.
Since $\m,\h_i\in\alpha$, the two halfspaces intersect for every $i$. Apply
Lemma~\ref{lem:nesting} to $\m$. If $\h_i\subseteq\m$ eventually, then
$\h_i\in\beta$ and upward consistency would force $\m\in\beta$, contradicting
$\m^*\in\beta$. Therefore, $\hm$ is transverse to every sufficiently large
$\hh_i$. It is then transverse to two consecutive hyperplanes in the chain,
contradicting strong separation. Hence $\alpha=\beta$.
\end{proof}

\begin{definition}[Regular point]\label{def:regular-point}
Let $X$ be irreducible. A point $\xi\in\partial X$ is \emph{regular} if
there is a descending chain $(\h_i)$ whose consecutive hyperplanes are
strongly separated and whose Roller clopen sets have intersection
$\{\xi\}$. Such a chain is called a defining chain of $\xi$. We denote the
set of regular points by $\bdreg X$.
\end{definition}

An isomorphism $\phi\colon X\to X'$ of irreducible CAT(0) cube complexes
carries $\HH(X)$ bijectively onto $\HH(X')$, preserving inclusion,
complementation and transversality, hence strong separation; it also
satisfies $\phi\,\mathcal U_{\h}=\mathcal U_{\phi\h}$.  Therefore $\phi$
carries defining chains to defining chains and $\bdreg X$ onto
$\bdreg X'$.  In particular, if $g$ is a cubical automorphism of
$X=X_1\times\cdots\times X_p$ carrying $X_k$ onto $X_r$, then
$g(\bdreg X_k)=\bdreg X_r$.

By Corollary~\ref{cor:pairwise-strong}, Definition~\ref{def:regular-point}
agrees with the chain characterization of regular points in
\cite[Proposition~5.10]{FernosLecureuxMatheus2018}, which is stated there for
an irreducible complex, and which we accordingly apply one factor at a time.

\begin{lemma}
\label{lem:absorption}
Let $\xi\in\bdreg X$ have defining chain $(\h_i)$.
\begin{enumerate}[label=\textup{(\alph*)}]
\item If $\kk\in\xi$, then $\h_i\subseteq\kk$ for every sufficiently large
$i$.
\item If $\eta\in\bdreg X$ is distinct from $\xi$ and $(\kk_j)$ is a defining
chain of $\eta$, then $\h_i\cap\kk_j=\varnothing$ for all sufficiently large
$i$ and $j$.
\end{enumerate}
\end{lemma}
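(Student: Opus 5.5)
For part (a) I would use Lemma~\ref{lem:nesting} directly. Let $\kk\in\xi$. Since $\xi\in\mathcal U_{\h_i}$ for every $i$, both $\h_i$ and $\kk$ lie in the ultrafilter $\xi$. By the observation after the definition of $\mathcal U_\h$, two halfspaces in one ultrafilter intersect, so $\kk\cap\h_i\neq\varnothing$ for every $i$. The chain has strongly separated consecutive hyperplanes, so the last clause of Lemma~\ref{lem:nesting} applies. After discarding finitely many terms we get $\h_i\subseteq\kk$ for all sufficiently large $i$, which is (a).

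For part (b), since $\xi\neq\eta$, I would choose a halfspace $\m$ with $\m\in\xi$ and $\m^*\in\eta$. Part (a) applied to $\xi$ and $\m$ gives $\h_i\subseteq\m$ for all $i\geq i_0$. Part (a) applied to $\eta$, its defining chain $(\kk_j)$ and $\m^*$ gives $\kk_j\subseteq\m^*$ for all $j\geq j_0$. Since $\m\cap\m^*=\varnothing$, we get $\h_i\cap\kk_j\subseteq\m\cap\m^*=\varnothing$ for all $i\geq i_0$ and $j\geq j_0$.

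The only point needing care is the existence of $\m$. Two distinct ultrafilters differ on some hyperplane: by (U1) each contains exactly one halfspace of every hyperplane, so if they agreed on every hyperplane they would be equal. Hence some halfspace lies in $\xi$ but not in $\eta$, and then its complement lies in $\eta$. Part (a) is applied to each chain separately, and this is legitimate because each is a defining chain in the sense of Definition~\ref{def:regular-point}. I do not expect a real obstacle here; the substantive input is the strong-separation clause of Lemma~\ref{lem:nesting}, which rules out the transverse alternative.
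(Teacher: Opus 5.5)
Your proof is correct and follows essentially the same route as the paper. Part (a) applies the strong-separation clause of Lemma~\ref{lem:nesting} to halfspaces that meet because they lie in the common ultrafilter $\xi$, and part (b) chooses a halfspace in $\xi$ whose complement is in $\eta$ and applies (a) to both chains. The paper handles the case $\hk=\hh_i$ separately before invoking Lemma~\ref{lem:nesting}; that lemma's own proof already discards such an index, so citing it directly is legitimate.
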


\begin{proof}
For part (a), the halfspaces $\kk$ and $\h_i$ belong to the same ultrafilter,
so they intersect. If $\hk=\hh_i$ for some $i$, then $\kk=\h_i$, because an
ultrafilter cannot contain both orientations of one hyperplane; hence
$\h_j\subseteq\kk$ for every $j\geq i$. Otherwise Lemma~\ref{lem:nesting}
applies. The transverse alternative contradicts the strong separation of a
consecutive pair, so $\h_i\subseteq\kk$ eventually.

For part (b), choose a halfspace $\mathfrak b\in\xi$ with
$\mathfrak b^*\in\eta$. Part (a) gives $\h_i\subseteq\mathfrak b$ for all
sufficiently large $i$, and $\kk_j\subseteq\mathfrak b^*$ for all
sufficiently large $j$. Since halfspaces are being regarded as complementary
vertex sets, $\mathfrak b\cap\mathfrak b^*=\varnothing$. Hence
$\h_i\cap\kk_j=\varnothing$ for all sufficiently large $i,j$.
\end{proof}

\subsection{Regular elements and their poles}\label{subsec:regular-elements}

\begin{definition}\label{def:regular-element}
Let $X=X_1\times\cdots\times X_p$ and let a group $G$ act on $X$ by cubical
automorphisms. An element $g\in G$ is \emph{regular with skewering data} if
its induced cubical automorphism preserves every factor and, for each $k$,
there are halfspaces $\mathfrak b_k'\subseteq\mathfrak b_k$ of $X_k$ such
that $\widehat{\mathfrak b_k'}$ and $\widehat{\mathfrak b_k}$ are strongly
separated and $g\mathfrak b_k\subseteq\mathfrak b_k'$.  We keep the chosen
halfspaces as part of the data.
\end{definition}

\begin{lemma}\label{lem:regular-poles}
Let $g$ be regular with skewering data
$(\mathfrak b_k',\mathfrak b_k)$.  For each factor $X_k$, the families
$\mathfrak c_{k,r}=g^r\mathfrak b_k$ for $r\in\mathbb Z$ and
$\mathfrak d_{k,r}=g^{-r}\mathfrak b_k^*$ for $r\geq0$ are descending in
the indicated directions, and consecutive hyperplanes are strongly
separated. Consequently, there are unique regular points
$\xi_k^+(g),\xi_k^-(g)\in\bdreg X_k$ such that
\begin{equation*}
\{\xi_k^+(g)\}=\bigcap_{r\geq0}\mathcal U_{g^r\mathfrak b_k}
\quad\text{and}\quad
\{\xi_k^-(g)\}=\bigcap_{r\geq0}\mathcal U_{g^{-r}\mathfrak b_k^*}.
\end{equation*}
The two points are distinct. We call them the Roller poles of $g$ on $X_k$.
\end{lemma}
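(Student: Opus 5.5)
We argue factor by factor, so fix $k$ and write $\mathfrak b=\mathfrak b_k$, $\mathfrak b'=\mathfrak b_k'$.

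\textbf{Step 1: the chains descend.} Since $g$ preserves $X_k$ and acts on $\HH(X_k)$ preserving inclusion, applying $g^r$ to $g\mathfrak b\subseteq\mathfrak b'\subseteq\mathfrak b$ gives
\[
g^{r+1}\mathfrak b\subseteq g^r\mathfrak b'\subseteq g^r\mathfrak b .
\]
The inclusion $g^{r+1}\mathfrak b\subseteq g^r\mathfrak b$ is proper. Otherwise $g^r\mathfrak b'=g^r\mathfrak b$, so $\mathfrak b'=\mathfrak b$, which is impossible because a hyperplane is not strongly separated from itself: any hyperplane transverse to $\widehat{\mathfrak b}$ is transverse to it twice, and such hyperplanes exist since a hyperplane of an essential complex is crossed by others. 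It is cleaner to note directly that strongly separated hyperplanes are disjoint by definition, hence distinct. Thus $(\mathfrak c_{k,r})_{r\in\Z}$ is strictly descending as $r$ increases.

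Complementation reverses inclusion, so $\mathfrak b^*\subseteq\mathfrak b'^*\subseteq(g\mathfrak b)^*=g\mathfrak b^*$. Applying $g^{-r-1}$ gives
\[
g^{-r-1}\mathfrak b^*\subsetneq g^{-r}\mathfrak b^*,
\]
so $(\mathfrak d_{k,r})_{r\geq0}$ is strictly descending.

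\textbf{Step 2: consecutive hyperplanes are strongly separated.} Consider first the chain $\mathfrak c$. Suppose a hyperplane $\hm$ were transverse to both $g^r\widehat{\mathfrak b}$ and $g^{r+1}\widehat{\mathfrak b}$. Lemma~\ref{lem:intermediate}, applied to $g^{r+1}\mathfrak b\subseteq g^r\mathfrak b'\subseteq g^r\mathfrak b$, shows that $\hm$ is also transverse to $g^r\widehat{\mathfrak b'}$. Then $g^{-r}\hm$ is transverse to both $\widehat{\mathfrak b}$ and $\widehat{\mathfrak b'}$, contradicting strong separation. Disjointness of the consecutive hyperplanes follows from the proper nesting. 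Hence consecutive hyperplanes of $\mathfrak c$ are strongly separated.

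The chain $\mathfrak d$ bounds the same hyperplanes $g^{-r}\widehat{\mathfrak b}$, so it inherits the same property. By Corollary~\ref{cor:pairwise-strong}, any two distinct hyperplanes in either chain are strongly separated.

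\textbf{Step 3: the poles.} Lemma~\ref{lem:singleton-chain}, applied in $X_k$ to the chains $(g^r\mathfrak b)_{r\geq0}$ and $(g^{-r}\mathfrak b^*)_{r\geq0}$, yields unique points $\xi_k^+(g)$ and $\xi_k^-(g)$ in $\partial X_k$. These points are regular by Definition~\ref{def:regular-point}, because the chains themselves are defining chains.

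\textbf{Step 4: the poles are distinct.} We have $\xi_k^+(g)\in\mathcal U_{\mathfrak b}$, so $\mathfrak b\in\xi_k^+(g)$. We also have $\xi_k^-(g)\in\mathcal U_{\mathfrak b^*}$, so $\mathfrak b^*\in\xi_k^-(g)$. Since an ultrafilter contains exactly one of $\mathfrak b,\mathfrak b^*$, the two points differ.

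\textbf{Main obstacle.} The only nonformal point is Step~2. One must interpose $g^r\mathfrak b'$ between consecutive terms so that Lemma~\ref{lem:intermediate} can be applied, and then translate back by $g^{-r}$, using that $g$ preserves transversality.
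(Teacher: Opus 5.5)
Your proof is correct and follows the paper's argument essentially step for step: interpose $g^r\mathfrak b'$ between consecutive terms and apply Lemma~\ref{lem:intermediate}, invoke Lemma~\ref{lem:singleton-chain} for the poles, and separate them by $\mathfrak b\in\xi_k^+(g)$ versus $\mathfrak b^*\in\xi_k^-(g)$. The one blemish is the side remark you discard in Step~1, that every hyperplane of an essential complex is crossed by another. That remark is false: in a tree no two hyperplanes meet. Your replacement reason, that strongly separated hyperplanes are disjoint and hence distinct, is exactly what is needed and is what the paper uses.
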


\begin{proof}
The skewering relation gives
$g\mathfrak b_k\subseteq\mathfrak b_k'\subseteq\mathfrak b_k$, so
$g^r\mathfrak b_k$ is descending as $r$ increases. Taking complements and
applying $g^{-1}$ gives $g^{-1}\mathfrak b_k^*\subseteq\mathfrak b_k^*$, so that the second family is descending.

We show that $\widehat{\mathfrak b_k}$ and $g\widehat{\mathfrak b_k}$ are
strongly separated. Since
$g\mathfrak b_k\subseteq\mathfrak b_k'\subsetneq\mathfrak b_k$, the halfspaces
$g\mathfrak b_k$ and $\mathfrak b_k$ are properly nested. Their bounding
hyperplanes are therefore distinct and non-transverse, hence disjoint.
If a hyperplane $\hm$ were transverse to both, then the inclusions
$g\mathfrak b_k\subseteq\mathfrak b_k'\subseteq\mathfrak b_k$ and
Lemma~\ref{lem:intermediate} would imply that $\hm$ is transverse to
$\widehat{\mathfrak b_k'}$. This contradicts the strong separation of
$\widehat{\mathfrak b_k'}$ and $\widehat{\mathfrak b_k}$. Applying powers of
$g$ proves strong separation of every consecutive pair in both chains.
Lemma~\ref{lem:singleton-chain} now gives the two asserted points.

The point $\xi_k^+(g)$ contains $\mathfrak b_k$, while $\xi_k^-(g)$ contains
$\mathfrak b_k^*$. An ultrafilter cannot contain both, so the poles are
distinct.
\end{proof}

The next proposition extracts from the random-walk argument of
Fern\'os--L\'ecureux--Math\'eus exactly the skewering data used below. We
include the extraction because the statement that regular elements exist does
not, by itself, record one common choice of strongly separated halfspaces in
all factors.

\begin{proposition}[Consequence of Fern\'os--L\'ecureux--Math\'eus]
\label{prop:FLM-regular}
Let $X$ be a non-Euclidean, finite-dimensional, second countable CAT(0) cube
complex, and let a countable group $G$ act essentially and non-elementarily
on $X$ by cubical automorphisms. Let $G^0$ be the subgroup preserving every
irreducible factor. Then $G^0$ contains an element regular with skewering
data in the sense of Definition~\ref{def:regular-element}.
\end{proposition}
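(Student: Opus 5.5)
The plan is to work one factor at a time to get the geometric input, and then to produce a \emph{single} element of $G^0$ that skews in all factors at once. That second step is where the random walk of \cite{FernosLecureuxMatheus2018} is needed.

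First, the reduction to factors. The subgroup $G^0$ has finite index in $G$, so it is countable. By \cite[Proposition~2.6]{CapraceSageev2011} it acts factorwise. Essentiality and non-elementarity pass to a finite-index subgroup, and then to the induced action on each irreducible factor $X_k$; this is standard, see \cite{CapraceSageev2011,FernosLecureuxMatheus2018}. Each $X_k$ is irreducible, essential and not quasi-isometric to a line. Caprace--Sageev's flipping and double-skewering lemmas then supply, in each $X_k$, halfspaces
\[
\mathfrak a_k\supsetneq\mathfrak b_k\supsetneq\mathfrak c_k
\]
with consecutive hyperplanes strongly separated, and hence pairwise strongly separated by Corollary~\ref{cor:pairwise-strong}. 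Double skewering gives, for each $k$ separately, some $g_k\in G^0$ with $g_k\mathfrak a_k\subseteq\mathfrak b_k$. The difficulty is that $g_k$ may behave badly on the other factors.

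Second, I would produce one common element. Fix a symmetric, generating, finitely supported (or at least generating) probability measure $\mu$ on $G^0$, and let $Z_n$ be the associated random walk. By the results of \cite{FernosLecureuxMatheus2018}, almost surely and simultaneously for every $k$, $Z_n v\to\xi_k^+\in\bdreg X_k$. The hitting measure $\nu_k$ is non-atomic and charges every nonempty open set of the form $\mathcal U_{\mathfrak h}$ with $\mathfrak h$ deep, by minimality on $\bdreg X_k$. The same holds for the reversed walk $Z_n^{-1}$, whose limit $\xi_k^-$ is distributed by the reflected measure $\check\nu_k$.

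To couple the factors and the two directions, I would write $Z_{2n}=Z_n\cdot(Z_n^{-1}Z_{2n})$, whose two pieces are independent. Using Lemma~\ref{lem:absorption}(a), I would aim for the following event $A_n$: for every $k$,
\begin{itemize}
\item[] $Z_n$ maps the complement of a small neighbourhood of its backward pole into $\mathcal U_{\mathfrak c_k}$, and
\item[] $(Z_n^{-1}Z_{2n})^{-1}$ maps $\mathcal U_{\mathfrak a_k}$ away from that neighbourhood.
\end{itemize}
On $A_n$ one gets $Z_{2n}\mathfrak a_k\subseteq\mathfrak c_k\subseteq\mathfrak b_k$. Here the halfspace inclusion is read off from inclusion of the clopen sets $\mathcal U$, by density of $X^0$ in $\Roll$.

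The required positivity comes from the following. The joint law of $(\xi_k^+)_k$ is a probability measure on $\prod_k\bdreg X_k$ whose marginals charge the open sets $\mathcal U_{\mathfrak c_k}$. To get positive joint mass, I would first translate by a fixed product of the $g_k$ together with the essential pushes, and then use the $G^0$-quasi-invariance of the joint hitting measure. Then $\mathbb P(A_n)>0$ for large $n$, so some realisation $g=Z_{2n}(\omega)$ works. Its skewering data is $(\mathfrak b_k'=\mathfrak b_k,\ \mathfrak b_k=\mathfrak a_k)$, which is strongly separated as required by Definition~\ref{def:regular-element}.

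The main obstacle is this positivity step: showing that the forward and backward limit points land \emph{simultaneously in all factors} inside the prescribed clopen sets with positive probability, and upgrading convergence of points to inclusions of halfspaces. I would try first to quote the contracting or regularity statement for $Z_n$ in \cite{FernosLecureuxMatheus2018} directly. That statement says that a.s. $Z_n$ is eventually regular, with its poles converging to $\xi^\pm$; combined with the full support of the product hitting measure on $\prod_k\bdreg X_k$, it would give the claim. The hand-built coupling above is a fallback in case that statement does not record a common choice of halfspaces.
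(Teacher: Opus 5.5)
Your proposal has a genuine gap, at the step you flag yourself. As set up, that step is false in general, not merely hard. You choose $\mathfrak a_k\supsetneq\mathfrak b_k\supsetneq\mathfrak c_k$ independently in each factor, before running the walk, and then look for one $g\in G^0$ with $g\mathfrak a_k\subseteq\mathfrak b_k$ for every $k$. Such a $g$ need not exist.

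Let $F_2$ act diagonally on $T\times T$, where $T$ is its Cayley tree. This action preserves both factors, is essential, and is non-elementary by the same end-stabilizer argument as in Example~\ref{ex:product-free-groups}; also $T\times T$ is non-Euclidean. Suppose $\mathfrak a_1,\mathfrak a_2$ are copies of two disjoint halfspaces of $T$. Then $g\mathfrak a_i\subsetneq\mathfrak a_i$ forces the single tree automorphism $g$ to be hyperbolic with attracting end in $\mathcal U_{\mathfrak a_i}$, for both $i=1,2$. This is impossible, since $\mathcal U_{\mathfrak a_1}\cap\mathcal U_{\mathfrak a_2}=\varnothing$.

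In measure-theoretic terms, the joint hitting measure is carried by the diagonal of $\partial T\times\partial T$, so it gives $\mathcal U_{\mathfrak c_1}\times\mathcal U_{\mathfrak c_2}$ mass zero. The diagonal is $G$-invariant, so translating by products of the $g_k$ and invoking quasi-invariance cannot repair this. For the same reason, the ``full support on $\prod_k\bdreg X_k$'' that your preferred route relies on is also false.

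There is a second, independent problem. The event $A_n$ presupposes a uniform north--south statement for $Z_n$: that it maps the complement of a neighbourhood of a ``backward pole'' of $Z_n$ into $\mathcal U_{\mathfrak c_k}$. But $Z_n$ has no backward pole a priori, since $Z_n^{-1}o$ converges only in law. A statement of this kind is precisely the nontrivial input, and it has to come from \cite{FernosLecureuxMatheus2018}.

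The paper avoids joint laws altogether. It fixes one good sample path, with limits $\eta_k\in\bdreg X_k$. It then takes $\mathfrak s_k\in\eta_k$ far enough along a defining chain of $\eta_k$ that $\check\nu_k(\mathcal U_{\mathfrak s_k^*})>1-1/(2p)$; this is possible because $\check\nu_k$ is non-atomic. The halfspaces are thus read off from the path and are automatically coherent across factors. The argument then runs as follows.
\begin{enumerate}
\item \cite[Lemma~11.3]{FernosLecureuxMatheus2018} supplies the north--south input as a dichotomy: for large $n$, either $Z_n\mathfrak s_k\subseteq\mathfrak s'_{k,2}$ or $Z_n\mathfrak s_k\supseteq(\mathfrak s'_{k,2})^*$, where $\mathfrak s'_{k,2}\subsetneq\mathfrak s_k$ are strongly separated.
\item \cite[Lemma~11.4]{FernosLecureuxMatheus2018} gives the skewering times in each factor lower density at least $\check\nu_k(\mathcal U_{\mathfrak s_k^*})$.
\item An elementary union bound then gives times of density $>1/2$ that skew in all $p$ factors at once.
\end{enumerate}
Any such $Z_n$, with $\mathfrak b_k=\mathfrak s_k$ and $\mathfrak b_k'=\mathfrak s'_{k,2}$, is the required element. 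The missing idea in your proposal is exactly this: choose the halfspaces from the path, and obtain simultaneity from per-factor densities close to $1$ rather than from positivity of a joint distribution.
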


\begin{proof}
Write $X=X_1\times\cdots\times X_p$ and fix vertices $o_k\in X_k^0$. Recall
that $X$ is finite-dimensional and second countable, so it satisfies the
standing convention of \cite{FernosLecureuxMatheus2018}, and $\HH(X)$, hence
every $\HH(X_k)$, is countable.
By \cite[Lemma~4.6]{FernosLecureuxMatheus2018}, the induced action of $G^0$
on every $X_k$ is essential and non-elementary. The
Fern\'os--L\'ecureux--Math\'eus results used below require only
admissibility.  We may nevertheless choose the measure with a finite first
moment.  Enumerate $G^0=\{g_1,g_2,\ldots\}$, put $o=(o_1,\ldots,o_p)$, and
assign to $g_j$ a positive weight proportional to $2^{-j}/(1+d(o,g_jo))$.
The resulting probability measure $\mu$ is admissible and has finite first
moment. Let $Z_n=s_1\cdots s_n$ be the associated right random walk. For each
factor, denote by $\check{\nu}_k$ the stationary measure for the reflected
measure $\check\mu$ on the Roller compactification of $X_k$.

We apply \cite[Theorem~7.1, Corollary~7.3 and
Theorem~8.1]{FernosLecureuxMatheus2018} to each factor separately.  Their
hypotheses hold: the action of $G^0$ on $X_k$ is essential and
non-elementary by the previous paragraph, and $X_k$ is irreducible, hence is
its own unique irreducible factor and is preserved by $G^0$, which is the
extra hypothesis of \cite[Theorem~8.1]{FernosLecureuxMatheus2018}.  We obtain
a set of sample paths of full measure such that, on every factor, $Z_n o_k$
converges to a regular point $\eta_k\in\bdreg X_k$. Since the action of $G^0$
on $X_k$ is non-elementary and $\check\mu$ is admissible whenever $\mu$ is,
the unique $\check\mu$-stationary measure $\check{\nu}_k$ is non-atomic; this
is recorded in the opening lines of the proof of
\cite[Theorem~8.1]{FernosLecureuxMatheus2018}.  Note also that the quantifier
in \cite[Lemma~11.4]{FernosLecureuxMatheus2018} is over \emph{every}
halfspace containing the limit point, so that the lemma applies to the
sample-path-dependent halfspaces chosen below. Fix one sample path having all
these properties.

For each $k$, choose a defining chain
$\mathfrak s_{k,1}\supsetneq\mathfrak s_{k,2}\supsetneq\cdots$ of
$\eta_k$.  The clopen sets $\mathcal U_{\mathfrak s_{k,m}}$ decrease to
$\{\eta_k\}$ by Lemma~\ref{lem:singleton-chain}.  Since $\check{\nu}_k$
is non-atomic, continuity from above gives
$\check{\nu}_k(\mathcal U_{\mathfrak s_{k,m}})\to0$.  The two clopen sets
$\mathcal U_{\mathfrak s_{k,m}}$ and
$\mathcal U_{\mathfrak s_{k,m}^*}$ partition the Roller compactification.
Choose $m(k)$ so that, for $\mathfrak s_k=\mathfrak s_{k,m(k)}$, one has
$\check{\nu}_k(\mathcal U_{\mathfrak s_k^*})>1-1/(2p)$.

Apply \cite[Lemma~11.3]{FernosLecureuxMatheus2018} to the chosen sample
path, its limit $\eta_k$, and the halfspace $\mathfrak s_k\in\eta_k$.  It
gives pairwise strongly separated halfspaces
$\mathfrak s_{k,2}'\subsetneq\mathfrak s_{k,1}'\subsetneq\mathfrak s_k$
and an integer $N_k$ such that, for $n>N_k$, either
$Z_n\mathfrak s_k\subseteq\mathfrak s_{k,2}'$ or
$Z_n\mathfrak s_k\supseteq(\mathfrak s_{k,2}')^*$.  Following that paper,
call $n$ an $\mathfrak s_k$-skewering time when the first inclusion holds,
and let $A_k$ be the set of all such times.  Lemma~11.4 of the same paper
gives
\begin{equation*}
 \liminf_{n\to\infty}\frac{1}{n}\#(A_k\cap\{1,\ldots,n\})
 \geq\check{\nu}_k(\mathcal U_{\mathfrak s_k^*})
 >1-\frac{1}{2p}.
\end{equation*}
For every $n$, the elementary union bound gives
\begin{equation*}
 \#\left(\bigcap_{k=1}^p A_k\cap\{1,\ldots,n\}\right)
 \geq\sum_{k=1}^p\#(A_k\cap\{1,\ldots,n\})-(p-1)n.
\end{equation*}
After division by $n$ and passage to lower limits, the preceding density
estimate yields
\begin{equation*}
 \liminf_{n\to\infty}\frac{1}{n}
 \#\left(\bigcap_{k=1}^p A_k\cap\{1,\ldots,n\}\right)>\frac12.
\end{equation*}
Choose $n>\max_kN_k$ belonging to every $A_k$, put $g=Z_n$, and set
$\mathfrak b_k=\mathfrak s_k$ and
$\mathfrak b_k'=\mathfrak s_{k,2}'$.  Then
$g\mathfrak b_k\subseteq\mathfrak b_k'$, and the hyperplanes bounding
$\mathfrak b_k$ and $\mathfrak b_k'$ are strongly separated.  Thus $g$ is
regular with skewering data in every factor.
\end{proof}

We need one further consequence of the skewering data.
Fern\'os--L\'ecureux--Math\'eus associate a unique visual point to every
squeezing Roller point, and the association is equivariant
\cite[Lemmas~6.8 and 6.10]{FernosLecureuxMatheus2018}.

\begin{lemma}\label{lem:poles-visual}
Let $g$ be regular with skewering data. Each pole $\xi_k^\pm(g)$ is a
contracting, hence squeezing, Roller point. Therefore there is a uniquely
associated visual point $\theta_k^\pm(g)\in\bdry X_k$, and the assignment
$\xi\mapsto\theta(\xi)$ is equivariant under cubical isometries.
\end{lemma}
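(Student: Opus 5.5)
The plan is to reduce the lemma to the definitions of contracting and squeezing Roller points in Fern\'os--L\'ecureux--Math\'eus, using the defining chains of the poles built in Lemma~\ref{lem:regular-poles}. By that lemma, $\xi_k^+(g)$ has the defining chain $\mathfrak c_{k,r}=g^r\mathfrak b_k$ ($r\geq0$), and $\xi_k^-(g)$ has the defining chain $\mathfrak d_{k,r}=g^{-r}\mathfrak b_k^*$. In each chain, consecutive hyperplanes are strongly separated. By Corollary~\ref{cor:pairwise-strong}, any two distinct hyperplanes in either chain are then strongly separated. Consequently each pole is a regular point of $X_k$ in the sense of Definition~\ref{def:regular-point}, and this is exactly the chain characterization in \cite[Proposition~5.10]{FernosLecureuxMatheus2018}.

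Next I will pass from regularity to the contracting property. In \cite{FernosLecureuxMatheus2018}, a Roller point is \emph{contracting} when it admits a descending chain of pairwise strongly separated halfspaces containing it. This is precisely the chain just produced, so I will cite their definition and observe that the chain satisfies it. If their definition is phrased through intervals $\mathcal I(x,\xi)$ having bounded intersection with hyperplanes, I will instead invoke their equivalence between regular and contracting points for irreducible complexes (the result preceding their Lemma~6.8), applied to the single factor $X_k$. The implication from contracting to squeezing is stated in their paper just before Lemma~6.8, so I will quote it directly. Then \cite[Lemma~6.8]{FernosLecureuxMatheus2018} gives the unique visual point $\theta_k^\pm(g)\in\bdry X_k$.

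Equivariance should come from \cite[Lemma~6.10]{FernosLecureuxMatheus2018}, applied to isomorphisms between factors. I must also handle automorphisms carrying $X_k$ onto $X_r$. For these I will use the remark after Definition~\ref{def:regular-point}: a cubical isomorphism carries defining chains to defining chains and preserves strong separation, so it maps squeezing points to squeezing points. The visual point is defined intrinsically, as the endpoint of the CAT(0) ray determined by the chain. Isometries carry such rays to rays, so $\phi\,\theta(\xi)=\theta(\phi\xi)$ follows from the uniqueness in Lemma~6.8.

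The main obstacle is matching conventions. Fern\'os--L\'ecureux--Math\'eus state contracting and squeezing for the whole complex, whereas here they are applied factorwise and across isomorphic factors. I will need to check that their lemmas need only irreducibility of $X_k$ and no group action, so that they can be used on each $X_k$ separately.
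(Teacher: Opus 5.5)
There is a genuine gap where you pass from regularity to the contracting property. The condition you attribute to Fern\'os--L\'ecureux--Math\'eus is a descending chain of pairwise strongly separated halfspaces containing $\xi$. That is only the chain characterization of a regular point (Definition~\ref{def:regular-point}, \cite[Proposition~5.10]{FernosLecureuxMatheus2018}). If it were enough, the lemma would hold for every regular point and the element $g$ would play no role. Your fallback, an ``equivalence between regular and contracting points'' for irreducible complexes, is not something \cite{FernosLecureuxMatheus2018} provides.

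The contracting notion of \cite[Remark~6.7]{FernosLecureuxMatheus2018}, which the paper invokes, asks for uniform control along the chain. One needs a chain of pairwise strongly separated halfspaces, all belonging to $\xi$, whose consecutive pairs have bounded bridge length. The paper also takes the chain bi-infinite. Pairwise strong separation alone gives no bound on bridge lengths, so observing that your one-sided chain $\mathfrak c_{k,r}$, $r\geq0$, ``is precisely'' such a chain does not establish contracting.

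The missing idea is to use that the chain is a $g$-orbit; this is what the skewering data buys you. Put $\mathfrak c_r=g^r\mathfrak b_k$ for all $r\in\mathbb Z$.
\begin{enumerate}
\item It is descending and pairwise strongly separated, by Lemma~\ref{lem:regular-poles} and Corollary~\ref{cor:pairwise-strong} applied to tails starting at any index.
\item Every consecutive pair $(\mathfrak c_r,\mathfrak c_{r+1})=g^r(\mathfrak b_k,g\mathfrak b_k)$ is the image of one fixed pair under an automorphism of $X_k$, since $g$ preserves every factor. Hence all bridge lengths are equal.
\item $\xi_k^+(g)$ contains $\mathfrak c_r$ for $r\geq0$ by definition, and for $r<0$ by upward closure, since $\mathfrak c_0\subseteq\mathfrak c_r$.
\end{enumerate}
The reversed complementary chain $g^{-r}\mathfrak b_k^*$ handles $\xi_k^-(g)$ in the same way. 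This periodicity is the real content of the lemma.

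The factorwise bookkeeping you single out as the main obstacle is harmless: $X_k$ is itself a CAT(0) cube complex, so \cite[Lemmas~6.8 and~6.10]{FernosLecureuxMatheus2018} apply to it directly. Once contracting is established this way, the rest of your plan agrees with the paper. That covers contracting implies squeezing, the unique visual point, and equivariance, including your remark about isomorphisms between factors.
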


\begin{proof}
Fix $k$ and put $\mathfrak c_r=g^r\mathfrak b_k$ for $r\in\mathbb Z$. The
chain is bi-infinite, descending, and pairwise strongly separated by
Lemma~\ref{lem:regular-poles} and Corollary~\ref{cor:pairwise-strong},
applied to the tail beginning at any given index.
Consecutive pairs are translates of one fixed pair, so their bridge lengths
are equal. The point $\xi_k^+(g)$ contains $\mathfrak c_r$ for every $r\geq0$
by definition. It also contains $\mathfrak c_r$ for $r<0$, because
$\mathfrak c_0\subseteq\mathfrak c_r$ and ultrafilters are upward closed.
Thus the bi-infinite chain witnesses that $\xi_k^+(g)$ is a contracting
Roller point in the sense of \cite[Remark~6.7]{FernosLecureuxMatheus2018}.
The same argument, applied to the reversed complementary chain, treats
$\xi_k^-(g)$. Contracting points are squeezing, and
\cite[Lemmas~6.8 and 6.10]{FernosLecureuxMatheus2018} give the unique visual
point and equivariance.
\end{proof}

\begin{lemma}\label{lem:pole-infinite-orbit}
Assume that $G\curvearrowright X$ is non-elementary, and let $g\in G^0$ be
regular with skewering data. Then every $G^0$-orbit of a pole
$\xi_k^\pm(g)$ is infinite.
\end{lemma}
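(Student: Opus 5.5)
We argue by contradiction, passing from a finite orbit of a Roller pole to a finite orbit in the visual boundary. Suppose some pole, say $\xi=\xi_k^+(g)$ (the case of $\xi_k^-(g)$ is identical), has a finite $G^0$-orbit. By Lemma~\ref{lem:poles-visual}, $\xi$ is a squeezing Roller point with an associated visual point $\theta=\theta_k^+(g)\in\bdry X_k$. The association $\xi\mapsto\theta(\xi)$ is equivariant under cubical isometries. Hence $h\xi=\xi$ implies $h\theta=\theta$ for $h\in G^0$, and the $G^0$-orbit of $\theta$ in $\bdry X_k$ is the image of the orbit of $\xi$, so it is also finite.

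The next step lifts this to a finite orbit for $G$ in $\bdry X$. Since $G^0$ has finite index in $G$, the $G$-orbit of $\theta$ in the disjoint union $\bigsqcup_r\bdry X_r$ is a finite union of $G^0$-orbits of points $c\theta$, where $c$ ranges over coset representatives. Each $c\theta$ lies in some $\bdry X_r$, and each such $G^0$-orbit is the $c$-translate of the finite orbit $G^0\theta$. So $G\theta$ is finite in $\bigsqcup_r\bdry X_r$.

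It remains to produce a genuinely finite orbit in $X\cup\bdry X$. There are two routes.

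The direct route is to work factorwise. By \cite[Lemma~4.6]{FernosLecureuxMatheus2018}, the action of $G^0$ on each $X_k$ is non-elementary, meaning it has no finite orbit in $X_k\cup\bdry X_k$. The finite $G^0$-orbit of $\theta\in\bdry X_k$ then contradicts this immediately. This avoids the product altogether.

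Alternatively, one can embed into the product. Choose a base point $o'\in X'=\prod_{r\neq k}X_r$ and consider the ray $\rho$ toward $\theta$ placed in the slice $X_k\times\{o'\}$. The ray $\rho$ defines a point of $\bdry X$, but its $G^0$-orbit also depends on how $G^0$ moves $o'$. A cleaner choice is the point of the visual boundary of the product given by the finite set of directions. This step (making the finite orbit honest in $\bdry X$) is the main obstacle, and it is why I would choose the factorwise contradiction via \cite[Lemma~4.6]{FernosLecureuxMatheus2018}. The non-elementarity of $G$ on $X$ passes to $G^0$ on $X_k$, so the finite orbit of $\theta$ alone suffices.

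The one point to verify carefully is that the equivariance in \cite[Lemmas~6.8 and 6.10]{FernosLecureuxMatheus2018} applies to all elements of $G^0$, not only to those fixing $\xi$. This holds because the association is defined intrinsically from the squeezing point by cubical isometries of $X_k$, and elements of $G^0$ act on $X_k$ by such isometries. Applying this to $\xi_k^-(g)$ via the reversed complementary chain gives the remaining case.
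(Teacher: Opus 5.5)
Your argument is correct, and its first step (using the equivariance in Lemma~\ref{lem:poles-visual} to turn a finite $G^0$-orbit of $\xi_k^\pm(g)$ into a finite $G^0$-orbit of $\theta_k^\pm(g)$ in $\bdry X_k$) is exactly the paper's. The routes differ only in how the contradiction is reached.

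You quote \cite[Lemma~4.6]{FernosLecureuxMatheus2018} to get that $G^0$ acts non-elementarily on $X_k$, so the contradiction is immediate. Your paragraph lifting the orbit to $\bigsqcup_r\bdry X_r$ then becomes unnecessary.

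The paper takes the product route you set aside, and your worry about it is unfounded. Slice rays $t\mapsto(o_1,\ldots,\rho(t),\ldots,o_p)$ built with different base points in the other factors stay at constant distance. So their class in $\bdry X$ does not depend on those base points. Since $h\in G^0$ acts as $(h_1,\ldots,h_p)$, it carries the class coming from $\rho$ to the class coming from $h_k\rho$. This gives a canonical $G^0$-equivariant embedding $\bdry X_k\hookrightarrow\bdry X$. The finite $G^0$-orbit is then finite in $\bdry X$, and the $G$-orbit is a union of $[G:G^0]$ translates of it. This contradicts non-elementarity of $G$ on $X$ directly.

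Your version is shorter and relies on a result the paper already cites in Proposition~\ref{prop:FLM-regular}. The paper's version is self-contained and uses only the hypothesis the lemma actually states. By contrast, the paper applies \cite[Lemma~4.6]{FernosLecureuxMatheus2018} to an action that is essential as well as non-elementary. That is an extra assumption not made in this lemma, though it does hold in the lemma's only application, Theorem~\ref{thm:general-position}. In effect, the paper's embedding argument proves the boundary half of the factorwise non-elementarity that you quote.
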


\begin{proof}
Suppose that $G^0\xi_k^\pm(g)$ is finite. By
Lemma~\ref{lem:poles-visual} and equivariance, the orbit
$G^0\theta_k^\pm(g)$ in $\bdry X_k$ is finite. Fix basepoints
$o_i\in X_i$. If $\rho$ is a geodesic ray in $X_k$, then
$t\mapsto(o_1,\ldots,o_{k-1},\rho(t),o_{k+1},\ldots,o_p)$ is a geodesic
ray in $X$. Two such rays built from $\rho$ and $\rho'$ stay at distance
$d(\rho(t),\rho'(t))$, so the construction is well defined and injective on
asymptotic classes. Changing the fixed basepoints changes this ray by a
bounded distance, so this construction defines a canonical embedding
$\bdry X_k\hookrightarrow\bdry X$. The embedding is $G^0$-equivariant because
$G^0$ preserves every factor and therefore acts on $X$ as a product
$(h_1,\ldots,h_p)$ of automorphisms of the factors. Hence the embedded
$G^0$-orbit of $\theta_k^\pm(g)$ is finite.

The $G$-orbit of $\theta_k^\pm(g)$ is a union of $[G:G^0]<\infty$ many
$G^0$-orbits, hence finite.  It is therefore a finite $G$-orbit in
$\bdry X$, contradicting non-elementarity.
\end{proof}

\begin{theorem}\label{thm:general-position}
Let $X=X_1\times\cdots\times X_p$ be a non-Euclidean,
finite-dimensional, second countable CAT(0) cube complex, and let a countable
group $G$ act essentially and non-elementarily on $X$ by cubical
automorphisms. For each $k$,
let $T_k\subseteq\bdreg X_k$ be finite. Then there is an element $a\in G^0$,
regular with skewering data, such that
$\xi_k^+(a),\xi_k^-(a)\notin T_k$ for every $1\leq k\leq p$.
\end{theorem}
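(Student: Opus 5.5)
Start from one element $g\in G^0$ that is regular with skewering data $(\mathfrak b_k',\mathfrak b_k)$, supplied by Proposition~\ref{prop:FLM-regular}. Then conjugate it: for $h\in G^0$, put $a=hgh^{-1}$. This element again preserves every factor. It is regular with skewering data $(h\mathfrak b_k',h\mathfrak b_k)$, because $h$ preserves inclusions and strong separation, and $a(h\mathfrak b_k)=hg\mathfrak b_k\subseteq h\mathfrak b_k'$. Its poles are $\xi_k^\pm(a)=h\,\xi_k^\pm(g)$, since $h\,\mathcal U_{g^r\mathfrak b_k}=\mathcal U_{a^r h\mathfrak b_k}$. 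So the task reduces to finding a single $h\in G^0$ with
\[
h\,\xi_k^\pm(g)\notin T_k\qquad\text{for all }1\leq k\leq p \text{ and both signs.}
\]

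For each $k$, each sign, and each $t\in T_k$, let $B_{k,\pm,t}=\{h\in G^0: h\xi_k^\pm(g)=t\}$. This set is either empty or a left coset $h_0\Stab_{G^0}(\xi_k^\pm(g))$. By Lemma~\ref{lem:pole-infinite-orbit}, the $G^0$-orbit of each pole is infinite. Hence each stabilizer $\Stab_{G^0}(\xi_k^\pm(g))$ has infinite index in $G^0$. We must avoid the finite union
\[
\bigcup_{k,\pm,t}B_{k,\pm,t},
\]
which is a finite union of cosets of infinite-index subgroups of $G^0$. By B.~H.~Neumann's lemma, a group is never a finite union of cosets of infinite-index subgroups: if it were, some subgroup in the covering would have finite index. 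Therefore this union is not all of $G^0$, and any $h$ outside it works.

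The point needing most care is the ``deterministic finite-coset argument'' mentioned in the introduction, that is, the Neumann step. To keep the paper self-contained I would give the short induction proof. Suppose $G^0=\bigcup_{i=1}^n x_iH_i$, where the subgroups $H_i$ are drawn from finitely many distinct ones. Pick one subgroup $H$ of infinite index in the list. Some coset $yH$ is not among the listed cosets of $H$, because there are infinitely many cosets of $H$. Then $yH$ is covered by the cosets of the other subgroups. Each listed coset $x_iH$ can be rewritten as $x_iy^{-1}(yH)$, and translating the covering of $yH$ accordingly covers $x_iH$ by cosets of the remaining subgroups. So the cosets of $H$ can be dropped from the covering, and we induct on the number of distinct subgroups. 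This finishes the proof, since all subgroups here have infinite index.

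Finally, note that the sets $T_k$ are subsets of distinct factor boundaries, and each $h\in G^0$ acts factorwise. So the constraints for different $k$ are handled simultaneously by the single element $h$ above.
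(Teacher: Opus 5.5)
Your proposal is correct and follows the paper's proof essentially step for step. You take one regular $g$ from Proposition~\ref{prop:FLM-regular} and identify the bad sets as cosets of pole stabilizers, which have infinite index by Lemma~\ref{lem:pole-infinite-orbit}. You then avoid their finite union by Neumann's lemma and set $a=hgh^{-1}$. The only difference is that you include a self-contained induction proof of the Neumann covering step, which the paper cites, and that sketch is sound.
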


\begin{proof}
By Proposition~\ref{prop:FLM-regular}, choose one regular element $g\in G^0$
with poles $\xi_k^\pm(g)$. For each triple $(k,\epsilon,t)$ with
$\epsilon\in\{1,-1\}$ and $t\in T_k$, consider the bad set
$B(k,\epsilon,t)=\{h\in G^0:h\xi_k^\epsilon(g)=t\}$. If this set is
nonempty, choose $h_0\in B(k,\epsilon,t)$. Then
$B(k,\epsilon,t)=h_0\Stab_{G^0}(\xi_k^\epsilon(g))$. Indeed,
$h\xi_k^\epsilon(g)=h_0\xi_k^\epsilon(g)$ if and only if $h_0^{-1}h$
stabilizes the pole. By Lemma~\ref{lem:pole-infinite-orbit}, every stabilizer
appearing here has infinite index in $G^0$.

The bad sets are indexed by the triples $(k,\epsilon,t)$ with
$1\leq k\leq p$, $\epsilon=\pm1$ and $t\in T_k$.  Since $X$ is
finite-dimensional, it has finitely many irreducible factors, and each $T_k$
is finite by hypothesis, so there are at most
$2\sum_{k=1}^p\lvert T_k\rvert$ nonempty bad sets, each a coset of a
subgroup of infinite index; the cardinality of $\HH(X)$ plays no role.
Using~\cite{Neumann1954}, their
union cannot cover $G^0$. Choose $h\in G^0$ outside this union and put
$a=hgh^{-1}$. The element $a$ is regular with the conjugated skewering data.
Its poles are $h\xi_k^\pm(g)$, and the choice of $h$ gives the required
avoidance.
\end{proof}

\subsection{Infinite girth for cubulated groups}

We now prove the cubical theorem. The proof uses the action on the set
$Y=\bigsqcup_{k=1}^p X_k^0$, not on a boundary. The canonical factor
decomposition gives an action of $G$ on $Y$ in the sense that an element
which sends $X_k$ to $X_{k'}$ sends the corresponding vertex set to
$X_{k'}^0$.
We first record the elementary attraction property of the two tails of a
skewered chain.

\begin{lemma}\label{lem:band-attraction}
Let $g\in G^0$ be regular with skewering halfspaces $\mathfrak b_k$, and
put $\mathfrak c_{k,r}=g^r\mathfrak b_k$ for $r\in\mathbb Z$. For
$N\geq1$, let
$U_g(N)=\bigcup_{k=1}^p(\mathfrak c_{k,N}\cup\mathfrak c_{k,-N}^*)$.
Then $g^t(Y\setminus U_g(N))\subseteq U_g(N)$ whenever $|t|\geq2N$.
\end{lemma}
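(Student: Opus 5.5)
The plan is a direct factorwise computation using only that the chain $(\mathfrak c_{k,r})_{r\in\Z}$ is descending (Lemma~\ref{lem:regular-poles}) and that $g$ preserves every factor. Since $g\in G^0$, each power $g^t$ maps $X_k^0$ into $X_k^0$. The sets $\mathfrak c_{k,\pm N}$ and their complements are subsets of $X_k^0$. It therefore suffices to fix $k$ and a vertex $y\in X_k^0\setminus(\mathfrak c_{k,N}\cup\mathfrak c_{k,-N}^*)$, and to show that $g^ty\in\mathfrak c_{k,N}\cup\mathfrak c_{k,-N}^*$. The pieces of $U_g(N)$ coming from the other factors lie in other components of $Y$, so they play no role.

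First I rewrite the hypothesis on $y$. Halfspaces are complementary vertex sets, so $y\notin\mathfrak c_{k,N}$ means $y\in\mathfrak c_{k,N}^*$, and $y\notin\mathfrak c_{k,-N}^*$ means $y\in\mathfrak c_{k,-N}$. I also record the equivariance identities
\[
g^s\mathfrak c_{k,r}=\mathfrak c_{k,r+s}
\quad\text{and}\quad
g^s\mathfrak c_{k,r}^*=\mathfrak c_{k,r+s}^*
\]
for all $r,s\in\Z$. They hold because $g$ is an automorphism, so it commutes with taking complements. Finally, monotonicity of the chain gives $\mathfrak c_{k,r}\subseteq\mathfrak c_{k,r'}$ whenever $r\geq r'$, and hence $\mathfrak c_{k,r}^*\subseteq\mathfrak c_{k,r'}^*$ whenever $r\leq r'$.

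Case $t\geq2N$. From $y\in\mathfrak c_{k,-N}$ we get $g^ty\in\mathfrak c_{k,t-N}$. Since $t-N\geq N$, this set is contained in $\mathfrak c_{k,N}\subseteq U_g(N)$.

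Case $t\leq-2N$. From $y\in\mathfrak c_{k,N}^*$ we get $g^ty\in\mathfrak c_{k,t+N}^*$. Since $t+N\leq-N$, this set is contained in $\mathfrak c_{k,-N}^*\subseteq U_g(N)$.

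There is no real obstacle here; the only point needing care is the direction of the inclusions for complements. The argument in fact works already for $|t|\geq N$ when $N\geq1$, so the stated bound $|t|\geq2N$ leaves room to spare.
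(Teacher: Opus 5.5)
Your proof is correct and is essentially the paper's argument. Like the paper, you reduce to a single factor, rewrite the hypothesis as $y\in\mathfrak c_{k,-N}\cap\mathfrak c_{k,N}^*$, and push $y$ forward by $g^t$ using equivariance and monotonicity of the descending chain.

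Your closing remark is wrong, however. Your two cases use $t-N\geq N$ and $t+N\leq-N$, which is exactly $|t|\geq 2N$, and this bound is sharp. Proper nesting gives a vertex $y\in\mathfrak c_{k,-N}\setminus\mathfrak c_{k,-N+1}$, and such a $y$ lies outside $U_g(N)$. Yet $g^{2N-1}y\in\mathfrak c_{k,N-1}\setminus\mathfrak c_{k,N}$ also lies outside $U_g(N)$, and $2N-1\geq N$ for every $N\geq1$. This is why the paper uses $\sigma^{2N}$ and $\tau^{2N}$ rather than $\sigma^{N}$ and $\tau^{N}$ in the proof of Theorem~\ref{thm:cubical-girth}.
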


\begin{proof}
Fix $k$ and let
$v\in X_k^0\setminus(\mathfrak c_{k,N}\cup\mathfrak c_{k,-N}^*)$.
Since the chain $\mathfrak c_{k,r}$ is descending as $r$ increases, one has
$v\in\mathfrak c_{k,-N}\setminus\mathfrak c_{k,N}$. If $t\geq2N$, then
$g^tv\in\mathfrak c_{k,t-N}\subseteq\mathfrak c_{k,N}\subseteq U_g(N)$.
If $t\leq-2N$, the relation $v\notin\mathfrak c_{k,N}$ gives
$g^tv\notin\mathfrak c_{k,N+t}$. Since $N+t\leq-N$, one has
$\mathfrak c_{k,-N}\subseteq\mathfrak c_{k,N+t}$. Therefore
$g^tv\in\mathfrak c_{k,-N}^*\subseteq U_g(N)$. Taking the union over all
factors proves the lemma.
\end{proof}

\begin{theorem}\label{thm:cubical-girth}
Let $X$ be a non-Euclidean, finite-dimensional, second countable CAT(0) cube
complex. Let $G$ be a finitely generated group acting on $X$ essentially and
non-elementarily by cubical automorphisms. Then $\Gir(G)=\infty$.
\end{theorem}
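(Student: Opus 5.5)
The plan is to verify Nakamura's criterion (Theorem~\ref{thm:nakamura}) for the action of $G$ on $Y=\bigsqcup_{k=1}^p X_k^0$. We fix a finite generating set $S=\{\gamma_1,\ldots,\gamma_\ell\}$ and put $F=\{\gamma_j^{\pm1}\}\cup\{e\}$. We then produce two regular elements $\sigma,\tau\in G^0$ and use the attraction sets of Lemma~\ref{lem:band-attraction}, replacing $\sigma,\tau$ by suitable powers, so that the conditions hold for all nonzero exponents.

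\emph{Step 1: poles in general position.} By Proposition~\ref{prop:FLM-regular} and Theorem~\ref{thm:general-position}, choose $\sigma\in G^0$ regular with skewering data. Since $G$ permutes factors and regular sets (as noted after Definition~\ref{def:regular-point}), the finite sets
\[
T_k=\{\,f\xi_r^\pm(\sigma),\ f^{-1}\xi_r^\pm(\sigma):f\in F,\ r\text{ with } fX_r=X_k \text{ or } f^{-1}X_r=X_k\,\}\subseteq\bdreg X_k
\]
are well defined. Apply Theorem~\ref{thm:general-position} again, with these sets enlarged by the $F$-translates of the relevant points, to obtain $\tau\in G^0$ regular. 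Its poles $\xi_k^\pm(\tau)$ and their $F$-translates then avoid the $F$-translates of the poles of $\sigma$.

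A third application gives a regular $\rho$ whose poles avoid all $F$-translates of the poles of $\sigma$ and $\tau$. We take the point $x$ to be a vertex deep in a skewering halfspace of $\rho$ in one factor, for instance $x\in\rho^{M}\mathfrak b_1(\rho)$ for large $M$. (One caveat: $F$ may move factors, and pole-avoidance has to be stated factorwise. This is handled by including all $F$-images in the $T_k$.)

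\emph{Step 2: disjointness of deep halfspaces.} By Lemma~\ref{lem:absorption}(b), defining chains of distinct regular points eventually have disjoint halfspaces. The chains $\mathfrak c_{k,r}=\sigma^r\mathfrak b_k$ ($r\to+\infty$) and $\mathfrak c_{k,-r}^*$ are defining chains of $\xi_k^\pm(\sigma)$, and $f$ maps defining chains to defining chains. Hence there is $N$ such that the following sets are pairwise disjoint whenever the underlying poles are distinct:
\begin{itemize}
\item $U_\sigma:=U_\sigma(N)$ and $U_\tau:=U_\tau(N)$ from Lemma~\ref{lem:band-attraction};
\item their translates $fU_\sigma,fU_\tau$ for $f\in F$;
\item the set $\rho^N\mathfrak b_1(\rho)\ni x$.
\end{itemize}
The required relations are: $x\notin F(U_\sigma\cup U_\tau)$; $U_\tau\cap fU_\sigma$ disjoint, that is, $fU_\tau\cap U_\sigma=\varnothing$; and $x\notin U_\sigma\cup U_\tau$. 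Finitely many pole pairs are involved and each is distinct by Step~1, so a single large $N$ suffices. Choosing the two pole points of each element on the same factor is also fine, because $\xi_k^+\neq\xi_k^-$ (Lemma~\ref{lem:regular-poles}).

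\emph{Step 3: ping-pong.} Replace $\sigma,\tau$ by $\sigma^{2N},\tau^{2N}$. For $r\neq0$ we have $|2Nr|\geq2N$, so Lemma~\ref{lem:band-attraction} gives $\sigma^{2Nr}(Y\setminus U_\sigma)\subseteq U_\sigma$. Step~2 puts $\{x\}\cup U_\tau\cup FU_\tau$ inside $Y\setminus U_\sigma$, which yields condition~(2); condition~(3) is symmetric, and condition~(1) is the disjointness of $x$ from $F(U_\sigma\cup U_\tau)$. Theorem~\ref{thm:nakamura} then gives $\Gir(G)=\infty$.

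The main obstacle is Step~2: we must ensure that a translate $f\mathfrak c_{k,N}$ does not meet $\mathfrak c_{k',N'}$ of another pole. Lemma~\ref{lem:absorption}(b) only gives this eventually along both chains, and $F$-translates lie in possibly different factors. If factor bookkeeping becomes messy, I would first pass to $G^0$, generated by a finite set via Schreier, prove $\Gir(G^0)=\infty$ there, and then lift. Lifting is however not covered by the quotient principle, so instead I would keep $G$ and handle it by the explicit inclusion of $F$-images in the $T_k$ as above.
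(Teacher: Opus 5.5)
Your proposal is correct and follows essentially the same route as the paper: Nakamura's criterion on $Y=\bigsqcup_k X_k^0$; a second regular element $\tau$ from Theorem~\ref{thm:general-position} whose poles avoid the generator-translates of the poles of $\sigma$; Lemma~\ref{lem:absorption}(b) to make deep halfspaces disjoint; and the powers $\sigma^{2N},\tau^{2N}$ handled by Lemma~\ref{lem:band-attraction}. The only difference is the basepoint, and here your third regular element $\rho$ works but is unnecessary: the paper fixes an arbitrary vertex $x\in X_1^0$ and uses that a vertex cannot lie in every term of an infinite strictly descending chain of halfspaces, so $x$ misses all the relevant $N$th terms once $N$ is large (and, similarly, it suffices for $\tau$'s poles to avoid $F$-translates rather than $F^2$-translates of $\sigma$'s poles, though your stronger requirement is harmless).
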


\begin{proof}
Write $X=X_1\times\cdots\times X_p$ and let $G^0$ be the
factor-preserving subgroup. Fix an arbitrary finite generating set
$S=\{\gamma_1,\ldots,\gamma_\ell\}$ of $G$, with the identity omitted. We
shall verify Nakamura's criterion for this generating set.

\smallskip
\noindent\emph{Step 1.}
Apply Theorem~\ref{thm:general-position} with every $T_k$ empty. We obtain
a regular element $\sigma\in G^0$ with skewering halfspaces
$\mathfrak b_k$. Let $\mathfrak c_{k,r}=\sigma^r\mathfrak b_k$ for
$r\in\mathbb Z$, and denote its poles by $\xi_{k}^+(\sigma)$ and
$\xi_k^-(\sigma)$.

\smallskip
\noindent\emph{Step 2.}
Regard the sets of regular points of the factors as the disjoint union
$\bigsqcup_k\bdreg X_k$.
Consider all poles of $\sigma$ and all their
translates by the generators and their inverses, i.e.,
\begin{equation*}
P=\{\xi_k^\delta(\sigma):1\leq k\leq p,\ \delta=\pm1\}
\cup
\{\gamma_j^\epsilon\xi_k^\delta(\sigma):1\leq j\leq\ell,\
\ \epsilon,\delta=\pm1,\ 1\leq k\leq p\}.
\end{equation*}
The set $P$ is finite, with $\lvert P\rvert\leq2p(2\ell+1)$, and every point
of $P$ is regular by the invariance recorded after
Definition~\ref{def:regular-point}: a generator carrying $X_k$ onto $X_r$
restricts to an isomorphism of cube complexes and therefore carries
$\bdreg X_k$ onto $\bdreg X_r$.
For each $r$, let $T_r=P\cap\bdreg X_r$.
Theorem~\ref{thm:general-position} gives a regular element $\tau\in G^0$,
with skewering halfspaces $\mathfrak b'_r$, whose two poles on $X_r$ avoid
$T_r$. Put $\mathfrak c'_{r,s}=\tau^s\mathfrak b'_r$ for
$s\in\mathbb Z$. Thus every pole of $\tau$ is distinct from every point in
$P$, which lies in the same factor.

\smallskip
\noindent\emph{Step 3.}
For $N\geq0$, the halfspaces $\tau^N\mathfrak b'_r$ and
$\tau^{-N}\mathfrak b_r'^*$ are the $N$th terms of defining chains of
$\xi_r^+(\tau)$ and $\xi_r^-(\tau)$.  Similarly,
$\sigma^N\mathfrak b_k$ and $\sigma^{-N}\mathfrak b_k^*$ lie in defining
chains of the poles of $\sigma$, while their translates by
$\gamma_j^\epsilon$ lie in the defining chains of the corresponding points of
$P$.

Whenever one of the two $\tau$-chains and one of these $\sigma$-chains lie
in the same factor, their endpoints are distinct by the choice of $\tau$.
Lemma~\ref{lem:absorption}(b) therefore makes their sufficiently deep terms
disjoint.  Chains in different factors are disjoint as subsets of $Y$.
There are only finitely many pairs of chains, so there is $N_1$ such that,
for every $N\geq N_1$, both $\tau$-halfspaces are disjoint from all the
corresponding $\sigma$-halfspaces and their generator-translates.

Fix a vertex $x\in X_1^0$.  Every chain just mentioned, and every translate
of a $\tau$-chain by some $\gamma_j^\epsilon$, has no vertex in its total
intersection.  Since there are only finitely many such chains, there is
$N_2$ such that, for every $N\geq N_2$, the point $x$ lies in none of their
$N$th terms.

Choose $N\geq\max\{N_1,N_2,1\}$ and define
\begin{align*}
U_\sigma&=\bigcup_{k=1}^p
 \left(\sigma^N\mathfrak b_k\cup\sigma^{-N}\mathfrak b_k^*\right),\text{and }
U_\tau=\bigcup_{k=1}^p
 \left(\tau^N\mathfrak b'_k\cup\tau^{-N}\mathfrak b_k'^*\right).
\end{align*}
The choice of $N_1$ gives
$U_\tau\cap U_\sigma=\varnothing$
and 
$U_\tau\cap\gamma_j^\epsilon U_\sigma=\varnothing$ for all
$1\leq j\leq\ell$, and $\ \epsilon=\pm1$.
Replacing $\epsilon$ by $-\epsilon$ and applying $\gamma_j^\epsilon$ to the
second relation gives
$U_\sigma\cap\gamma_j^\epsilon U_\tau=\varnothing$ for every $j$ and
$\epsilon=\pm1$.  The choice of $N_2$ gives
\begin{equation*}
x\notin(U_\sigma\cup U_\tau)\cup
\bigcup_{\epsilon=\pm1}\bigcup_{j=1}^{\ell}
\gamma_j^\epsilon(U_\sigma\cup U_\tau).
\end{equation*}

\smallskip
\noindent\emph{Step 4}
Set $\widehat\sigma=\sigma^{2N}$ and
$\widehat\tau=\tau^{2N}$.  Lemma~\ref{lem:band-attraction} gives, for every
$t\neq0$,
\begin{equation*}
\widehat\sigma^t(Y\setminus U_\sigma)\subseteq U_\sigma
\quad\text{and}\quad
\widehat\tau^t(Y\setminus U_\tau)\subseteq U_\tau.
\end{equation*}

\smallskip
\noindent\emph{Step 5.}
The basepoint condition above is exactly \eqref{eq:nak-1}.  The established
disjointness relations imply
\begin{equation*}
\{x\}\cup U_\tau\cup
\bigcup_{\epsilon=\pm1}\bigcup_{j=1}^{\ell}\gamma_j^\epsilon U_\tau
\subseteq Y\setminus U_\sigma.
\end{equation*}
Applying the attraction relation for $\widehat\sigma$ proves
\eqref{eq:nak-2}.  Similarly,
\begin{equation*}
\{x\}\cup U_\sigma\cup
\bigcup_{\epsilon=\pm1}\bigcup_{j=1}^{\ell}\gamma_j^\epsilon U_\sigma
\subseteq Y\setminus U_\tau,
\end{equation*}
and the attraction relation for $\widehat\tau$ proves \eqref{eq:nak-3}.
Thus Theorem~\ref{thm:nakamura} applies to the two elements
$\widehat\sigma=\sigma^{2N}$ and $\widehat\tau=\tau^{2N}$, with the sets
$U_\sigma,U_\tau$ and the basepoint $x$.  Therefore, $G$ is noncyclic and
$\Gir(G)=\infty$.
\end{proof}
\section{Finite-index subgroups and virtual quotients}\label{sec:finite-index}

The preceding cubical argument suggests the more general question of whether
infinite girth passes from a finite-index subgroup to the ambient group.
Of course, the
cyclic degeneracy case has to be excluded since $\mathbb Z$ has index two in
the infinite
dihedral group $D_\infty$, while $\Gir(\mathbb Z)=\infty$ and
$\Gir(D_\infty)=2$.  Indeed, the translations form a proper subgroup of
$D_\infty$, so every finite generating set of $D_\infty$ contains a
reflection $s$, and $ss$ is a nonempty cyclically reduced word of length two
representing the identity.

In the opposite direction, the implication is well-known. If $K\leq G$ has
finite index $d$ and $\Gir(K)<\infty$, then $\Gir(G)\leq(2d-1)\Gir(K)$
\cite[Lemma~3.1]{Schleimer2000}; when $K$ is normal in $G$ the bound improves
to $\bigl(2\operatorname{Diam}(G/K)+1\bigr)\Gir(K)$
\cite[Corollary~3.2]{Schleimer2000}.  Equivalently, infinite girth passes to
finite-index subgroups.  This yields the following reduction.

\begin{proposition}\label{prop:finite-index-core}
Let $H\leq\Gamma$ be a finite-index subgroup.  Suppose that $H$ is
noncyclic and $\Gir(H)=\infty$.  Then
\[
K=\operatorname{Core}_\Gamma(H)
  =\bigcap_{\gamma\in\Gamma}\gamma H\gamma^{-1}
\]
is a noncyclic normal finite-index subgroup of $\Gamma$ and
$\Gir(K)=\infty$.
\end{proposition}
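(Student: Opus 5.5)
Three things need checking: $K$ is normal of finite index, $K$ is noncyclic, and $\Gir(K)=\infty$.

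\emph{Normality and finite index.} $K$ is an intersection of conjugates, so it is normal in $\Gamma$. Conjugation permutes the conjugates of $H$, and there are at most $[\Gamma:H]$ of them. So $K$ is a finite intersection of finite-index subgroups. Poincar\'e's bound then gives $[\Gamma:K]\leq[\Gamma:H]!$, and in particular $[H:K]<\infty$. Since $\Gamma$ is finitely generated, as the girth setting assumes, $H$ and $K$ are finitely generated by Schreier's lemma, so $\Gir(K)$ is defined.

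\emph{Infinite girth.} Apply the inequality recorded just above the statement, \cite[Lemma~3.1]{Schleimer2000}, to the pair $K\leq H$ with $d=[H:K]$. If $\Gir(K)<\infty$, then $\Gir(H)\leq(2d-1)\Gir(K)<\infty$, contradicting $\Gir(H)=\infty$. Hence $\Gir(K)=\infty$. This is the statement that infinite girth passes to finite-index subgroups, applied inside $H$ rather than $\Gamma$.

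\emph{Noncyclicity.} I expect this to be the only point needing care. Suppose $K$ were cyclic. If $K$ is trivial, then $H$ is finite. A nontrivial finite group has finite girth: any $s\neq e$ in a generating set has finite order $n$, and $s^n$ is a nonempty cyclically reduced word representing $e$. The trivial group is cyclic, so either way this contradicts the hypotheses on $H$. If $K\cong\mathbb Z$, then $H$ is virtually infinite cyclic. I would show such $H$ has finite girth unless it is cyclic. Since $H$ is infinite and virtually $\mathbb Z$, it has a finite normal subgroup $F$ with $H/F\cong\mathbb Z$ or $D_\infty$. Consider the case $F\neq1$ or $H/F\cong D_\infty$. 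By Proposition~\ref{prop:quotient} in contrapositive form this does not immediately help, so I would argue directly with Schleimer's bound. Choose a characteristic finite-index subgroup $C\cong\mathbb Z$ of $H$ with finite quotient. In every generating set $S$ of $H$, some letter $s$ acts nontrivially: either $s$ lies outside the centralizer of $C$, or $s$ maps into the torsion part. Short relators such as $s^n$, or commutator-type words $[s^k,t^k]$ with uniformly bounded exponents, then give $\Gir(H)<\infty$.

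A cleaner way to finish noncyclicity, which I would try first, is to invoke the known fact (Schleimer, Akhmedov) that a finitely generated virtually cyclic group has infinite girth only if it is cyclic. Given $\Gir(H)=\infty$, this forces $H\cong\mathbb Z$, contradicting the hypothesis that $H$ is noncyclic. So $K$ is noncyclic.
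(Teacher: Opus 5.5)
The normality and finite-index argument, and the proof that $\Gir(K)=\infty$ via Schleimer's Lemma~3.1 applied to $K\le H$, are exactly the paper's. The gap is in noncyclicity, which is the only step that needs a real argument.

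Your preferred finish cites the fact that a finitely generated noncyclic virtually cyclic group has finite girth, without a precise reference. That fact is true, but it is exactly what has to be established here, so the step is assumed rather than proved.

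Your direct sketch does not close the gap either. The dichotomy ``$s$ lies outside the centralizer of $C$, or $s$ maps into the torsion part'' does not produce a relator, and relators $s^n$ need a generator of finite order. For example, take $H=\mathbb Z\times\mathbb Z/2$ with $S=\{(1,0),(1,1)\}$: no generator has finite order, and every generator centralizes $C$. The words $[s^k,t^k]$ you mention are the right ones. However, you never say why they are trivial in $H$, and that is the whole content of the step.

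The missing idea is that a virtually cyclic group satisfies a \emph{uniform} law. With that, you need no case analysis, no $F$/$D_\infty$ structure theorem, and no separate treatment of $K=\{e\}$. Under the contradiction hypothesis you can take $C=K$ itself:
\begin{itemize}
\item Since $K\trianglelefteq\Gamma$, also $K\trianglelefteq H$, and $K$ is cyclic of finite index $r=[H:K]$ in $H$.
\item By Lagrange in $H/K$, $x^r\in K$ for every $x\in H$.
\item Hence $[x^r,y^r]=e$ identically in $H$.
\end{itemize}
The paper does the same, except that it takes $C$ to be the core in $H$ of a finite-index cyclic subgroup. It then quotes Schleimer's Theorem~4.1: a noncyclic group satisfying a nontrivial law has finite girth.

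You can also finish by hand. Because $H$ is noncyclic, every finite generating set $S$ contains two distinct elements $s,t$. The word $s^rt^rs^{-r}t^{-r}$ is then a nonempty cyclically reduced word of length $4r$ representing $e$. Thus $\Gir(H)\le4r<\infty$, contradicting $\Gir(H)=\infty$.
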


\begin{proof}
The subgroup $K$ is normal and of finite index in $\Gamma$, and it has finite
index in $H$.  Using \cite[Lemma~3.1]{Schleimer2000}, applied inside $H$, we
see that
$\Gir(K)=\infty$. It remains to show that $K$ is noncyclic.  If $K$ were
cyclic, then $H$ would be virtually cyclic.  Choose a finite-index cyclic
subgroup $C_0\leq H$ and set $C=\operatorname{Core}_H(C_0)$.  Then
$C\trianglelefteq H$ is cyclic and has finite index; write $r=[H:C]$.
Since $|H/C|=r$, we have $x^r\in C$
for every $x\in H$, and therefore $H$ satisfies the law $[x^r,y^r]=e$.
Since $[x^r,y^r]$ is a nontrivial word in the free group $F(x,y)$, the
noncyclic group $H$ satisfies a nontrivial law.  Therefore
\cite[Theorem~4.1]{Schleimer2000} gives $\Gir(H)<\infty$, contradicting
the hypothesis.
\end{proof}

The next construction allows a quotient of a finite-index subgroup to be
promoted to a quotient of the ambient group.
We give the details in order to fix the permutation convention.

\begin{proposition}\label{prop:induced-quotient}
Let $H\leq\Gamma$ have finite index $d$, and let
$\phi\colon H\twoheadrightarrow L$ be an epimorphism.  Then there is a
homomorphism $\Psi\colon\Gamma\longrightarrow L^d\rtimes\operatorname{Sym}(d)$
such that $\Psi(H)$ admits an epimorphism onto $L$.
\end{proposition}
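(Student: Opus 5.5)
This is the classical induced (wreath-product) construction. Choose a right transversal: write $\Gamma=\bigsqcup_{i=1}^d Hg_i$ with $g_1=e$. The group $\Gamma$ acts on the right cosets $H\backslash\Gamma$ by right multiplication. For each $\gamma\in\Gamma$ and each $i$, there is a unique index $\pi_\gamma(i)$ and a unique element $h_i(\gamma)\in H$ with
\[
g_i\gamma=h_i(\gamma)\,g_{\pi_\gamma(i)} .
\]
The map $\pi_\gamma$ is a permutation of $\{1,\ldots,d\}$.

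Define
\[
\Psi(\gamma)=\bigl((\phi(h_1(\gamma)),\ldots,\phi(h_d(\gamma))),\pi_\gamma\bigr)\in L^d\rtimes\operatorname{Sym}(d).
\]
The semidirect product structure is to be fixed so that $\Psi$ is a homomorphism. Comparing $g_i\gamma\gamma'=h_i(\gamma)h_{\pi_\gamma(i)}(\gamma')\,g_{\pi_{\gamma'}(\pi_\gamma(i))}$ gives the two cocycle identities
\[
h_i(\gamma\gamma')=h_i(\gamma)\,h_{\pi_\gamma(i)}(\gamma'),\qquad \pi_{\gamma\gamma'}=\pi_{\gamma'}\circ\pi_\gamma .
\]
So I will let permutations act by composition in the order "first $\pi_\gamma$, then $\pi_{\gamma'}$", i.e. $\operatorname{Sym}(d)$ composing left to right. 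The multiplication is then
\[
(f,\pi)(f',\pi')=\bigl((f(i)f'(\pi(i)))_i,\ \pi'\circ\pi\bigr).
\]
One has to check that this is a group law: associativity is a direct check, and the identity is $(e,\mathrm{id})$. The homomorphism property of $\Psi$ then follows immediately from the cocycle identities. If one prefers the standard left-composition convention, one can instead use the opposite group of $\operatorname{Sym}(d)$, which is isomorphic to $\operatorname{Sym}(d)$ via $\pi\mapsto\pi^{-1}$. Getting this convention consistent is the only real bookkeeping obstacle.

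For the final claim, take $h\in H$. Since $g_1=e$, we have $g_1h=h\,g_1$, so $\pi_h(1)=1$ and $h_1(h)=h$. Thus $\Psi(H)$ lies in the stabilizer subgroup
\[
P=\{(f,\pi):\pi(1)=1\}.
\]
On $P$, the map $(f,\pi)\mapsto f(1)$ is a homomorphism to $L$: by the product formula, the first coordinate of the product is $f(1)f'(\pi(1))=f(1)f'(1)$. Restricting this map to $\Psi(H)$ gives $\Psi(h)\mapsto\phi(h)$. Its image is therefore $\phi(H)=L$, and this is the required epimorphism $\Psi(H)\twoheadrightarrow L$.
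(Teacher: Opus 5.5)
Your proof is correct and is essentially the paper's argument: the same induced (wreath-product) cocycle construction, followed by restricting to the stabilizer of the trivial coset and projecting to the first coordinate. The only difference is that you use right cosets, which forces your right-action convention on $\operatorname{Sym}(d)$. The paper instead uses left cosets with $c_i(g)=t_i^{-1}g\,t_{\sigma_g^{-1}(i)}$ and the action $(\sigma\cdot a)_i=a_{\sigma^{-1}(i)}$, and the two semidirect products are identified by $(f,\pi)\mapsto(f,\pi^{-1})$, exactly as you note.
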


\begin{proof}
Choose representatives $t_1=e,t_2,\ldots,t_d$ for the left cosets
$\Gamma/H$.  For $g\in\Gamma$, let $\sigma_g\in\operatorname{Sym}(d)$ be
defined by $g t_iH=t_{\sigma_g(i)}H$.  Let $g,h\in\Gamma$. From
$(gh)t_iH=g\bigl(ht_iH\bigr)=gt_{\sigma_{h}(i)}H=t_{\sigma_{g}\sigma_{h}(i)}H$
we obtain that
$\sigma_{gh}=\sigma_g\sigma_h$, and
hence
$\sigma_{gh}^{-1}=\sigma_h^{-1}\sigma_g^{-1}$.
For $1\leq i\leq d$, let
$c_i(g)=t_i^{-1}g\,t_{\sigma_g^{-1}(i)}$. Observe that
\[c_i(g)H=t_i^{-1}gt_{\sigma_{g}^{-1}(i)}H=t_i^{-1}t_{\sigma_g(\sigma_{g}^{-1}(i))}H=H.\]
Therefore, $c_i(g)\in H$ for all $1\le i\le d$. Moreover, for $g,h\in\Gamma$,
we have
\[
c_i(g)\,c_{\sigma_g^{-1}(i)}(h)
=\bigl(t_i^{-1}g\,t_{\sigma_g^{-1}(i)}\bigr)
 \bigl(t_{\sigma_g^{-1}(i)}^{-1}h\,t_{\sigma_h^{-1}\sigma_g^{-1}(i)}\bigr)
=t_i^{-1}(gh)\,t_{\sigma_{gh}^{-1}(i)}
=c_i(gh).
\]
Let $\operatorname{Sym}(d)$ act on $L^d$ by
$(\sigma\cdot(a_i))_i=a_{\sigma^{-1}(i)}$, so that in the corresponding
semidirect product the $i$-th coordinate of
$(\mathbf a,\sigma)(\mathbf b,\tau)$ equals $a_i\,b_{\sigma^{-1}(i)}$, and
the permutation coordinate equals $\sigma\tau$.
Now, for $g\in \Gamma$, define
$\Psi(g)=\big((\phi(c_1(g)),\ldots,\phi(c_d(g))),\sigma_g\big)$. It is easy
to see that $\Psi$ is a homomorphism.

If $h\in H$, then $ht_1H=hH=t_1H$, so $\sigma_h(1)=1$ and
$c_1(h)=t_1^{-1}h\,t_1=h$.  Thus $\Psi(H)$ is contained in the subgroup
\[
\operatorname{Stab}(1)=\{(\mathbf a,\sigma):\sigma(1)=1\}
\leq L^d\rtimes\operatorname{Sym}(d).
\]
If
$\sigma(1)=1$, then $\sigma^{-1}(1)=1$ and the first coordinate of
$(\mathbf a,\sigma)(\mathbf b,\tau)$ is $a_1b_1$.  Restricting the first
coordinate projection to
$\Psi(H)$ gives a homomorphism $\Psi(H)\to L$ with image
$L$.
\end{proof}

\begin{theorem}\label{thm:virtual-linear-quotient}
Let $\Gamma$ be finitely generated and let $H\leq\Gamma$ have finite index.
If $H$ admits an epimorphism onto a non-virtually-solvable linear group $L$,
then $\Gir(\Gamma)=\infty$.
\end{theorem}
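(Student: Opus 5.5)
The plan is to push the quotient of $H$ up to a quotient of $\Gamma$ via Proposition~\ref{prop:induced-quotient}, show that this quotient $\Psi(\Gamma)$ is a linear group that is not virtually solvable, and then conclude with Akhmedov's theorem and the quotient principle.

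\textbf{Step 1: build the linear quotient.} Let $\phi\colon H\twoheadrightarrow L$ be the given epimorphism with $L\leq\mathrm{GL}_n(\mathbb F)$ for some field $\mathbb F$, and let $d=[\Gamma:H]$. Since $\Gamma$ is finitely generated and $H$ has finite index, $H$ is finitely generated, and hence so is $L$. Proposition~\ref{prop:induced-quotient} gives a homomorphism
\[
\Psi\colon\Gamma\to L^d\rtimes\operatorname{Sym}(d),
\]
and we set $Q=\Psi(\Gamma)$.

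\textbf{Step 2: $Q$ is finitely generated and linear.} $Q$ is finitely generated as a quotient of $\Gamma$. The wreath-type group $L^d\rtimes\operatorname{Sym}(d)$ embeds in $\mathrm{GL}_{nd}(\mathbb F)$, using block-diagonal matrices for $L^d$ and block permutation matrices for $\operatorname{Sym}(d)$. I will check that this realizes the convention $(\sigma\cdot\mathbf a)_i=a_{\sigma^{-1}(i)}$, but any consistent convention yields an embedding. Hence $Q$ is linear.

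\textbf{Step 3: $Q$ is not virtually solvable.} By Proposition~\ref{prop:induced-quotient}, $\Psi(H)$ surjects onto $L$. Suppose $Q$ had a solvable subgroup $A$ of finite index. Then $A\cap\Psi(H)$ is solvable and of finite index in $\Psi(H)$. Its image in $L$ would therefore be a solvable subgroup of finite index in $L$, contradicting the hypothesis on $L$.

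\textbf{Step 4: conclude.} By \cite[Theorem~4.4]{Akhmedov2005}, used exactly as in the proof of Theorem~\ref{thm:akhmedov-one-projection}, $\Gir(Q)=\infty$. Moreover $Q$ is noncyclic, since a cyclic group is solvable. Proposition~\ref{prop:quotient} applied to $\Gamma\twoheadrightarrow Q$ then gives $\Gir(\Gamma)=\infty$.

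The only real points to verify are the faithful linear embedding of $L^d\rtimes\operatorname{Sym}(d)$ in Step 2 and the passage of non-virtual-solvability from $L$ up to $Q$ in Step 3. Both are elementary, so I expect no genuine obstacle. The main risk lies in the hypotheses of Akhmedov's theorem: it is stated for finitely generated linear groups that are not virtually solvable, and I must make sure that finite generation of $Q$ is recorded before applying it.
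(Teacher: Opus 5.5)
Your proposal is correct and follows the paper's proof essentially step for step. You induce the quotient to $Q=\Psi(\Gamma)\leq L^d\rtimes\operatorname{Sym}(d)$ via Proposition~\ref{prop:induced-quotient}, embed that group block-wise in $\mathrm{GL}_{nd}$, transfer non-virtual-solvability from $L$ up to $Q$, and finish with \cite[Theorem~4.4]{Akhmedov2005} and Proposition~\ref{prop:quotient}; your Step~3 simply spells out the paper's one-line remark that a group containing a non-virtually-solvable finite-index subgroup is not virtually solvable.
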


\begin{proof}
Apply Proposition~\ref{prop:induced-quotient} and let
$Q=\Psi(\Gamma)$.  Since $\Gamma$ is finitely generated, so is $Q$.  Also
$[Q:\Psi(H)]\leq[\Gamma:H]$, because $\Psi$ is onto $Q$ and the cosets of
$\Psi(H)$ in $Q$ are the images of the cosets of $H$ in $\Gamma$.  The group
$\Psi(H)$ surjects onto $L$, so it is not virtually solvable; a group with a
non-virtually-solvable subgroup of finite index is not virtually solvable,
so $Q$ is not virtually solvable.  In particular, $Q$ is noncyclic.

Suppose $L\leq\operatorname{GL}_n(\mathbb K)$.  The group
$L^d\rtimes\operatorname{Sym}(d)$ is linear. Indeed, embed $L^d$ as block
diagonal matrices in $\operatorname{GL}_{nd}(\mathbb K)$ and realize
$\operatorname{Sym}(d)$ as permutations of the $d$ blocks. Thus $Q$ is a
finitely generated noncyclic non-virtually-solvable linear group, and
\cite[Theorem~4.4]{Akhmedov2005} gives $\Gir(Q)=\infty$.  Since $Q$ is a
noncyclic quotient of $\Gamma$, Proposition~\ref{prop:quotient} gives
$\Gir(\Gamma)=\infty$.
\end{proof}

Recall that a group is \emph{large} if it has a finite-index subgroup which
surjects onto a nonabelian free group.  Akhmedov and Mishra asked whether
every finitely generated large group has infinite girth
\cite[Question~1]{AkhmedovMishra2026}.  We answer this question in the
affirmative.

\begin{corollary}\label{cor:large-girth}
Every finitely generated large group has infinite girth.
\end{corollary}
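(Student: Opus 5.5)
The corollary should follow directly from Theorem~\ref{thm:virtual-linear-quotient}, so the plan is to check its hypotheses. Let $\Gamma$ be a finitely generated large group. By definition there is a finite-index subgroup $H\leq\Gamma$ and an epimorphism $\phi\colon H\twoheadrightarrow F$ onto a nonabelian free group $F$.

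First I would arrange that the target is a linear group. Theorem~\ref{thm:virtual-linear-quotient} needs $L$ linear, and a free group of uncountable rank is awkward. Since $\Gamma$ is finitely generated and $H$ has finite index, $H$ is finitely generated (Schreier), so $F=\phi(H)$ is a finitely generated nonabelian free group $F_r$ with $r\geq2$. Even without this step, one could compose $\phi$ with a surjection of $F$ onto $F_2$ obtained by killing all but two basis elements.

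Next I would verify the two properties of $L=F_r$ that the theorem requires.
\begin{enumerate}
\item \emph{Linearity.} $F_2$ embeds in $\mathrm{SL}(2,\mathbb Z)$ through the Sanov matrices $\begin{pmatrix}1&2\\0&1\end{pmatrix}$ and $\begin{pmatrix}1&0\\2&1\end{pmatrix}$. Every $F_r$ with $r$ finite embeds in $F_2$, so $F_r\leq\mathrm{GL}_2(\mathbb Q)$.
\item \emph{Not virtually solvable.} Every finite-index subgroup of $F_r$ is, by Nielsen--Schreier, free of rank at least $2$ and hence not solvable. So $F_r$ is not virtually solvable.
\end{enumerate}

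Theorem~\ref{thm:virtual-linear-quotient} applied to $H$ and $L=F_r$ then gives $\Gir(\Gamma)=\infty$. I expect no real obstacle here. The only care needed is to make sure the free quotient is finitely generated, hence linear, and the reduction above settles that.
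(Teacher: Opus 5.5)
Your proposal is correct and follows the paper's proof essentially verbatim. You use finite generation of $H$ (hence of $F$), linearity of $F$ via an embedding into $\mathrm{GL}_2(\mathbb Z)$, and non-virtual-solvability of $F$, then apply Theorem~\ref{thm:virtual-linear-quotient}; your Sanov-matrix and Nielsen--Schreier remarks just supply details the paper states without proof.
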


\begin{proof}
Let $H\leq\Gamma$ be a finite-index subgroup admitting an epimorphism onto a
nonabelian free group $F$.  Since $\Gamma$ is finitely generated and $H$ has
finite index, $H$ and hence $F$ are finitely generated, so $F$ has finite
rank at least two.  The group $F$ embeds in
$\operatorname{GL}_2(\mathbb Z)$ and is not virtually solvable, so
Theorem~\ref{thm:virtual-linear-quotient} applies.
\end{proof}

\begin{remark}\label{rem:cubical-route}
Corollary~\ref{cor:large-girth} can also be deduced from
Theorem~\ref{thm:cubical-girth} in place of \cite{Akhmedov2005}.  Indeed,
$F^d\rtimes\operatorname{Sym}(d)$ acts properly on a product of $d$ copies of
the Cayley tree of $F$, permuting the factors, and this product is an
essential, non-Euclidean, finite-dimensional, locally finite CAT(0) cube
complex.  Since $\Gamma$ acts transitively on $\Gamma/H$, all $d$ coordinate
projections of $\Psi(\Gamma)\cap F^d$ are conjugate in the wreath product,
and one of them contains a finite-index subgroup of $F$; hence, all of them
are non-elementary, and the action of $\Psi(\Gamma)$ on the product is
essential and non-elementary.
\end{remark}

\begin{remark}\label{rem:finite-index-question}
The argument above does not settle the general finite-index problem.  If
$H\leq\Gamma$ is noncyclic, $[\Gamma:H]<\infty$, and $\Gir(H)=\infty$, then
Proposition~\ref{prop:finite-index-core} reduces the problem to the case in
which $H$ is normal.  Theorem~\ref{thm:virtual-linear-quotient} shows,
moreover, that if a counterexample does exist, the subgroup $H$ in this case
admits no epimorphism
onto a non-virtually-solvable linear group. Combined with
\cite[Lemma~3.1]{Schleimer2000}, the same applies to every finite-index
subgroup of $H$. In particular, no finite-index subgroup of $H$ maps onto a
non-virtually-solvable linear group.  Therefore, $H$ is not large.
\end{remark}
\begin{remark}
The above result was obtained independently by Akhmedov and Mahmudov~\cite{AM26} by similar arguments. We thank them for letting us know.     
\end{remark}
In connection with \cite[Question~2]{AkhmedovMishra2026}, we record the
following more general obstruction coming from extensions of groups
satisfying laws. We thank Azer Akhmedov for the following general version.
\begin{proposition}\label{prop:ZsemidirectGamma}
Let
$1\longrightarrow N\longrightarrow G\longrightarrow H\longrightarrow1$
be a short exact sequence, where $G$ is finitely generated.  Suppose that
$N$ and $H$ each satisfy a nontrivial law.  If
$G\not\cong\mathbb Z$, then $\Gir(G)<\infty$.
\end{proposition}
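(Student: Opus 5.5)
Since laws survive extensions, the plan is to show that $G$ itself satisfies a nontrivial law and then invoke Schleimer's theorem \cite[Theorem~4.1]{Schleimer2000}, as in the proof of Proposition~\ref{prop:finite-index-core}: a noncyclic group satisfying a nontrivial law has finite girth. We must also arrange that $G$ is noncyclic. If $G$ were cyclic but not isomorphic to $\mathbb Z$, it would be finite, and then every finite generating set contains a nontrivial element $s$ of some finite order $n$. The word $s^n$ is cyclically reduced and represents the identity, so $\Gir(G)<\infty$. We may therefore assume $G$ is noncyclic.

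The key step is the extension argument. Let $u(x_1,\dots,x_a)$ be a nontrivial law for $H$ and $v(y_1,\dots,y_b)$ a nontrivial law for $N$, both reduced words in free groups. Since $u$ vanishes on $H=G/N$, every value $u(g_1,\dots,g_a)$ with $g_i\in G$ lies in $N$. Hence the composite word
\[
w=v\bigl(u(x_{1,1},\dots,x_{1,a}),\dots,u(x_{b,1},\dots,x_{b,a})\bigr),
\]
built in the free group on the $ab$ distinct letters $x_{i,j}$, vanishes identically on $G$.

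The main point to check is that $w$ is nontrivial in the free group. This holds because the elements $u_i=u(x_{i,1},\dots,x_{i,a})$ for $1\le i\le b$ involve disjoint sets of variables and are each nontrivial. Indeed, the subgroup they generate is free on $u_1,\dots,u_b$: the free group on all $x_{i,j}$ is the free product of the free groups $F_i=F(x_{i,1},\dots,x_{i,a})$, and each $u_i$ generates an infinite cyclic subgroup of $F_i$. So the subgroup they generate is the free product of these cyclic groups. It follows that $v(u_1,\dots,u_b)\neq e$ whenever $v\neq e$. If one prefers a two-variable law, one can then substitute the standard test words $x^{k}yx^{-k}$ for the letters; this preserves nontriviality since those elements freely generate a free subgroup of $F(x,y)$.

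Having a nontrivial law on the noncyclic finitely generated group $G$, \cite[Theorem~4.1]{Schleimer2000} gives $\Gir(G)<\infty$. The only obstacle I anticipate is the nontriviality of the composed word, which the free-product argument above handles. The remaining bookkeeping is the finite cyclic case, treated in the first paragraph; note that the hypothesis $G\not\cong\mathbb Z$ is exactly what excludes the one genuinely infinite-girth cyclic exception.
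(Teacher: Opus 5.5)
Your proposal is correct and follows essentially the same route as the paper: you compose the laws as $v(u_1,\ldots,u_s)$ in disjoint blocks of variables, prove nontriviality because the $u_i$ generate the free product $\langle u_1\rangle*\cdots*\langle u_s\rangle$, and then apply \cite[Theorem~4.1]{Schleimer2000} in the noncyclic case. Your explicit $s^n$ argument for the finite cyclic case just spells out the step that the paper states in one line.
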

\begin{proof}
Choose a nontrivial law
$u=u(x_1,\ldots,x_r)\in F(x_1,\ldots,x_r)$
for $H$ and a nontrivial law
$v=v(y_1,\ldots,y_s)\in F(y_1,\ldots,y_s)$
for $N$.  For pairwise distinct variables $x_{i,j}$, let
$u_i=u(x_{i,1},\ldots,x_{i,r})$,
where $1\leq i\leq s$. 
Let
$W=v(u_1,\ldots,u_s)$. For every substitution of the variables $x_{i,j}$ by elements of $G$, the
image of each $u_i$ in $H$ is trivial.  Hence every $u_i$ evaluates to an
element of $N$, and the law $v$ shows that $W$ evaluates to the identity.
Thus $W$ is a law for $G$. It remains to verify that $W$ is nontrivial.  The free group on the
variables $x_{i,j}$ is the free product of the free groups on the $s$
disjoint blocks
$\{x_{i,1},\ldots,x_{i,r}\}$.
The elements $u_1,\ldots,u_s$ are nontrivial elements belonging to
distinct free factors.  Therefore the subgroup they generate is the free group
$\langle u_1\rangle*\cdots*\langle u_s\rangle$.  The substitution
$y_i\mapsto u_i$ is injective, so the nontriviality of $v$ implies that
$W\neq e$.

Thus $G$ satisfies a nontrivial law.  If $G$ is noncyclic, then
\cite[Theorem~4.1]{Schleimer2000} gives $\Gir(G)<\infty$.  If $G$ is
cyclic, the assumption $G\not\cong\mathbb Z$ implies that $G$ is finite,
and hence again has finite girth.
\end{proof}
We end this section by comparing the two mechanisms. Recall that the first mechanism is finite retraction discrimination, which for
countable groups is equivalent to mixed-identity-freeness by
\cite[Proposition~5.3]{HullOsin2016}.  The second uses the
finite ping-pong property $P_{\mathrm{FPP}}$ and Nakamura's criterion. The cubical argument verifies Nakamura's criterion
directly and does not require either finite retraction discrimination or
$P_{\mathrm{FPP}}$.

\begin{proposition}\label{prop:commuting-obstruction}
Let $G$ contain nontrivial commuting subgroups $A$ and $B$ with $B$ normal in
$G$.  Then, for any $a\in A\setminus\{e\}$ and $b\in B\setminus\{e\}$, the
element $[a,zbz^{-1}]$ is a nontrivial mixed identity for $G$.  In particular
$G$ is not mixed-identity-free, and $G$ does not have finite retraction
discrimination.
\end{proposition}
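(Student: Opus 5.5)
We will check the two claims separately: first that $w=[a,zbz^{-1}]$ is a nontrivial element of $\Gamma*\langle z\rangle$ (here $G*\langle z\rangle$), and then that every retraction $\varphi_\gamma$ kills it. The final sentence then follows from the definitions. A group that is not MIF fails finite retraction discrimination, since taking $E=\{w\}$ in Definition~\ref{def:discrimination} already fails.

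\emph{Vanishing under every retraction.} Fix $\gamma\in G$. Then
\[
\varphi_\gamma(w)=[a,\gamma b\gamma^{-1}].
\]
Because $B$ is normal in $G$, the element $\gamma b\gamma^{-1}$ lies in $B$. Because $A$ and $B$ commute elementwise, it commutes with $a\in A$. Hence $\varphi_\gamma(w)=e$ for every $\gamma\in G$, so $w$ is a mixed identity.

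\emph{Nontriviality in the free product.} Expand the commutator with the convention $[x,y]=xyx^{-1}y^{-1}$:
\[
w=a\,z\,b\,z^{-1}\,a^{-1}\,z\,b^{-1}\,z^{-1}.
\]
The $G$-syllables here are $a$, $b$, $a^{-1}$ and $b^{-1}$. Each is nontrivial because $a\neq e$ and $b\neq e$. The $z$-syllables are $z^{\pm1}$, and they alternate strictly with the $G$-syllables. So this is a reduced alternating word in the free product $G*\langle z\rangle$ of length $8$.

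The normal form theorem for free products then gives $w\neq e$. Equivalently, in the notation of \eqref{eq:ozawa-spelling}--\eqref{eq:no-cancellation}, the spelling has every interior $a_i\neq e$, so the no-cancellation condition holds vacuously and $w\notin G$. If the other commutator convention $x^{-1}y^{-1}xy$ is used, the same argument applies verbatim to that spelling.

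\emph{Main point to check.} The only step needing care is nontriviality: we must make sure no syllable collapses. This is exactly where the hypotheses $a\neq e$ and $b\neq e$ are used. The commuting and normality hypotheses enter only in the vanishing step.
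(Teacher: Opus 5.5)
Your proposal is correct and follows essentially the same argument as the paper: normality of $B$ and elementwise commutation of $A$ with $B$ give $\varphi_g(w)=[a,gbg^{-1}]=e$ for every $g\in G$, while the free-product normal form shows $w\neq e$. Your explicit expansion $w=a\,z\,b\,z^{-1}a^{-1}z\,b^{-1}z^{-1}$ into a strictly alternating word with nontrivial syllables spells out the normal-form step that the paper states without detail.
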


\begin{proof}
Choose $a\in A\setminus\{e\}$ and $b\in B\setminus\{e\}$.  The
free-product normal form shows that $w=[a,zbz^{-1}]$ is nontrivial in
$G*\langle z\rangle$.  Let $g\in G$.  Since $B$ is normal,
$gbg^{-1}\in B$, and since $A$ and $B$ commute, one has
$\varphi_g(w)=[a,gbg^{-1}]=e$.
Thus $w$ is a nontrivial mixed identity, so $G$ is not mixed-identity-free;
and since no retraction detects $w$, Definition~\ref{def:discrimination}
fails as well.  When $A$ is also normal, this is the direct-product case of
\cite[Proposition~5.4(b)]{HullOsin2016}.
\end{proof}

\begin{example}\label{ex:product-free-groups}
The group $F_2\times F_2$ contains the two factors as nontrivial commuting
normal subgroups, so Proposition~\ref{prop:commuting-obstruction} rules out
finite retraction discrimination.  Nevertheless, it acts properly and
essentially on the product $T_1\times T_2$ of the two Cayley trees.  We
verify that this action is non-elementary.

A finite orbit in $T_1\times T_2$ would be bounded and would therefore have a
unique circumcenter fixed by $F_2\times F_2$.  This is impossible because
each factor contains elements acting as nontrivial translations on its Cayley
tree.
Suppose now that there is a finite orbit in the visual boundary.  The kernel
of the permutation action on this orbit is a finite-index subgroup
$H\leq F_2\times F_2$ fixing a point
$\theta\in\partial_\infty(T_1\times T_2)$.  A geodesic ray representing
$\theta$ has the form
$t\mapsto(\rho_1(a_1t),\rho_2(a_2t))$, where $a_1,a_2\geq0$,
$a_1^2+a_2^2=1$, and a term with $a_i=0$ is constant.  At least one
coefficient is positive; assume $a_1>0$.  Then $\theta$ determines an
endpoint $\theta_1\in\partial_\infty T_1$, and the first-coordinate action of
$H$ fixes $\theta_1$.  In particular,
$H_1=H\cap(F_2\times\{e\})$ is a finite-index subgroup of the first copy of
$F_2$ and fixes $\theta_1$.

For completeness, the stabilizer in $F_2$ of an endpoint of its Cayley tree
is cyclic.  Fix an endpoint $\eta$ and an integer-valued Busemann function
$b_\eta$ on the vertices of the tree.  Every element $u$ stabilizing $\eta$
satisfies $b_\eta(ux)-b_\eta(x)=\beta_\eta(u)$ for an integer independent
of $x$, and
$\beta_\eta\colon\Stab_{F_2}(\eta)\to\mathbb Z$ is a homomorphism.  If
$\beta_\eta(u)=0$, then $u$ is not hyperbolic: a hyperbolic element fixing
$\eta$ translates along an axis having $\eta$ as one endpoint, and the
absolute value of its Busemann character equals its positive translation
length.  An automorphism of a tree is either elliptic or hyperbolic, so $u$
is elliptic.  The left action of $F_2$
on its Cayley tree is free on vertices and without inversions; hence $u=e$.
Thus $\beta_\eta$ is injective, and the stabilizer is isomorphic to a
subgroup of $\mathbb Z$, hence cyclic.

It follows that $H_1$ is cyclic.  This is impossible: by the
Nielsen--Schreier rank formula, a subgroup of index $d<\infty$ in $F_2$ is
free of rank $1+d\geq2$.  Therefore, the action on $T_1\times T_2$ is
non-elementary.  Theorem~\ref{thm:cubical-girth} gives
$\Gir(F_2\times F_2)=\infty$.
\end{example}
\section{Acknowledgements}
We thank Azer Akhmedov for taking the time to go over a near complete draft of this manuscript and for his various comments and suggestions. We also thank him for showing us Theorem~\ref{thm:akhmedov-one-projection} and Proposition~\ref{prop:ZsemidirectGamma}.

\bibliography{name}
\bibliographystyle{amsalpha}
\end{document}